\documentclass[11pt]{article}
\usepackage{enumitem}
\usepackage[all]{xy}
\usepackage[noadjust]{cite}
\usepackage{amsfonts}
\usepackage{amsthm}
\usepackage{graphicx}
\usepackage{mathrsfs}
\usepackage{bm}
\usepackage{cite}
\usepackage[colorlinks,linkcolor=blue,citecolor=blue]{hyperref}
\usepackage{amssymb,amsmath} 
\usepackage{makecell}
\usepackage{tikz}
\usepackage{pgf}
\usepackage{tikz}
\usetikzlibrary{patterns}
\usepackage{pgffor}
\usepackage{pgfcalendar}
\usepackage{pgfpages}
\usepackage{shuffle,yfonts}
\usepackage{mathtools}
\DeclareFontFamily{U}{shuffle}{}
\DeclareFontShape{U}{shuffle}{m}{n}{ <-8>shuffle7 <8->shuffle10}{}

\newcommand{\nc}{\newcommand}
\newcommand\ta{{\texttt{x}_0}}
\newcommand\tb{{\texttt{x}_1}}
\newcommand\tc{{\texttt{x}_{-1}}}
\nc{\tz}{\tilde\zeta}
\nc{\AMZV}{\mathsf {AMZV}}
\nc{\ud}{\mathrm{d}}
\nc{\ES}{\mathsf {ES}}
\nc{\MZV}{\mathsf {MZV}}
\nc{\MtV}{\mathsf {MtV}}
\nc{\MTV}{\mathsf {MTV}}
\nc{\MSV}{\mathsf {MSV}}
\nc{\MMV}{\mathsf {MMV}}
\nc{\MMVo}{\mathsf {MMVo}}
\nc{\MMVe}{\mathsf {MMVe}}
\nc{\AMMV}{\mathsf {AMMV}}
\nc{\AMTV}{\mathsf {AMTV}}
\nc{\AMtV}{\mathsf {AMtV}}
\nc{\AMSV}{\mathsf {AMSV}}
\nc{\CMZV}{\mathsf {CMZV}}
\nc{\sha}{\shuffle}
\nc{\cst}{\rotatebox[origin=c]{180}{$\sha$}}
\nc{\cstt}{\rotatebox[origin=c]{180}{$\scriptstyle \sha$}}
\nc{\de}{\delta}
\nc{\DD}{{\mathbb D}}
\nc{\anbb}[1]{\left\langle#1\right\rangle}
\nc{\bibb}[1]{\left\{#1\right\}}
\nc{\mibb}[1]{\left[#1\right]}
\nc{\smbb}[1]{\left(#1\right)}
\nc{\doubb}[1]{\llbracket#1\rrbracket}
\nc{\dm}[1]{\left|#1\right|}

\nc{\Gbinom}[2]{\genfrac{(}{)}{0mm}{0}{#1}{#2}}
\nc{\gbinom}[2]{\genfrac{(}{)}{0mm}{1}{#1}{#2}}
\nc{\Rbinom}[2]{\genfrac{\langle}{\rangle}{0mm}{0}{#1}{#2}}
\nc{\rbinom}[2]{\genfrac{\langle}{\rangle}{0mm}{1}{#1}{#2}}
\nc{\Qbinom}[2]{\genfrac{[}{]}{0mm}{0}{#1}{#2}_q}
\nc{\qbinom}[2]{\genfrac{[}{]}{0mm}{1}{#1}{#2}_q}

\nc{\binq}[2]{\genfrac{[}{]}{0mm}{0}{#1}{#2}}
\nc{\tbnq}[2]{\genfrac{[}{]}{0mm}{1}{#1}{#2}}
\nc{\cinq}[2]{\genfrac{\{}{\}}{0mm}{0}{#1}{#2}}
\nc{\tcnq}[2]{\genfrac{\{}{\}}{0mm}{1}{#1}{#2}}

\nc{\mfrac}[2]{\genfrac{}{}{0pt}{}{#1}{#2}}
\nc{\tf}{\tfrac}
\nc{\db}{{\mathbb D}}
\nc{\pari}{{\rm par}}
\nc{\dk}{{\mathbb K}}
\nc{\ola}{\overleftarrow}
\nc{\ora}{\overrightarrow}
\nc{\lra}{\longrightarrow}
\nc{\Lra}{\Longrightarrow}
\nc\Res{{\rm Res}}
\nc\setX{{\mathsf{X}}}
\nc\fA{{\mathfrak{A}}}
\nc\evaM{{\texttt{M}}}
\nc\evaML{{\text{\em{\texttt{M}}}}}
\nc\z{{\texttt{z}}}
\nc\emz{\emph{\texttt{z}}}
\nc\tx{{\texttt{x}}}
\nc\txp{{\tx_1}} 
\nc\txn{{\tx_{-1}}} 
\nc\neo{{1}}
\nc{\yi}{{1}}
\nc\one{{-1}}
\nc\gD{{\Delta}}
\nc\eps{{\varepsilon}}
\nc{\bfMB}{{\bf MB}}
\nc{\bftB}{{\bf tB}}
\nc{\bfTB}{{\bf TB}}
\nc{\bfSB}{{\bf SB}}
\nc{\bfB}{{\bf B}}
\nc{\bfp}{{\bf p}}
\nc{\bfq}{{\bf q}}
\nc{\bfr}{{\bf r}}
\nc{\bfu}{{\bf u}}
\nc{\bfv}{{\bf v}}
\nc{\bfa}{{\bf a}}
\nc{\bfw}{{\bf w}}
\nc{\bfy}{{\bf y}}
\nc{\T}{\ddot{t}}
\nc{\bfe}{{\boldsymbol{\sl{e}}}}
\nc{\bfi}{{\boldsymbol{\sl{i}}}}
\nc{\bfj}{{\boldsymbol{\sl{j}}}}
\nc{\bfk}{{\boldsymbol{\sl{k}}}}
\nc{\bfl}{{\boldsymbol{\sl{l}}}}
\nc{\bfm}{{\boldsymbol{\sl{m}}}}
\nc{\bfn}{{\boldsymbol{\sl{n}}}}
\nc{\bfs}{{\boldsymbol{\sl{s}}}}
\nc{\bft}{{\boldsymbol{\sl{t}}}}
\nc{\bfx}{{\boldsymbol{\sl{x}}}}
\nc{\bfz}{{\boldsymbol{\sl{z}}}}
\nc\bfgs{{\boldsymbol \gs}}
\nc\bfgl{{\boldsymbol \lambda}}
\nc\bfsi{{\boldsymbol \gs}}
\nc\bfet{{\boldsymbol \eta}}
\nc\bfeta{{\boldsymbol \eta}}
\nc\bfeps{{\boldsymbol \eps}}
\nc\mmu{{\boldsymbol \mu}}
\nc\bfone{{\bf 1}}
\nc{\myone}{{1}}

 \nc{\calA}{{\mathcal A}}
 \nc{\calB}{{\mathcal B}}
 \nc{\calC}{{\mathcal C}}
 \nc{\calD}{{\mathcal D}}
 \nc{\calE}{{\mathcal E}}
 \nc{\calF}{{\mathcal F}}
 \nc{\calG}{{\mathcal G}}
 \nc{\calH}{{\mathcal H}}
 \nc{\calI}{{\mathcal I}}
 \nc{\calJ}{{\mathcal J}}
 \nc{\calK}{{\mathcal K}}
 \nc{\calL}{{\mathcal L}}
 \nc{\calM}{{\mathcal M}}
 \nc{\calN}{{\mathcal N}}
 \nc{\calO}{{\mathcal O}}
 \nc{\calP}{{\mathcal P}}
 \nc{\calQ}{{\mathcal Q}}
 \nc{\calR}{{\mathcal R}}
 \nc{\calS}{{\mathcal S}}
 \nc{\calT}{{\mathcal T}}
 \nc{\calU}{{\mathcal U}}
 \nc{\calV}{{\mathcal V}}
 \nc{\calW}{{\mathcal W}}
 \nc{\calX}{{\mathcal X}}
 \nc{\calY}{{\mathcal Y}}
 \nc{\calZ}{{\mathcal Z}}
  \nc{\cala}{{\mathcal a}}
 \nc{\calb}{{\mathcal b}}
 \nc{\calc}{{\mathcal c}}
 \nc{\cald}{{\mathcal d}}
 \nc{\cale}{{\mathcal e}}
 \nc{\calf}{{\mathcal f}}
 \nc{\calg}{{\mathcal g}}
 \nc{\calh}{{\mathcal h}}
 \nc{\cali}{{\mathcal i}}
 \nc{\calj}{{\mathcal j}}
 \nc{\calk}{{\mathcal k}}
 \nc{\call}{{\mathcal l}}
 \nc{\calm}{{\mathcal m}}
 \nc{\caln}{{\mathcal n}}
 \nc{\calo}{{\mathcal o}}
 \nc{\calp}{{\mathsf p}}
 \nc{\calq}{{\mathcal q}}
 \nc{\calr}{{\mathcal r}}
 \nc{\cals}{{\mathcal s}}
 \nc{\calt}{{\mathcal t}}
 \nc{\calu}{{\mathcal u}}
 \nc{\calv}{{\mathcal v}}
 \nc{\calw}{{\mathcal w}}
 \nc{\calx}{{\mathcal x}}
 \nc{\caly}{{\mathcal y}}
 \nc{\calz}{{\mathcal z}}
 \nc{\ot}{{\otimes}}
\usetikzlibrary{arrows,shapes,chains}
 \allowdisplaybreaks

\catcode`!=11
\let\!int\int \def\int{\displaystyle\!int}
\let\!lim\lim \def\lim{\displaystyle\!lim}
\let\!sum\sum \def\sum{\displaystyle\!sum}
\let\!sup\sup \def\sup{\displaystyle\!sup}
\let\!inf\inf \def\inf{\displaystyle\!inf}
\let\!cap\cap \def\cap{\displaystyle\!cap}
\let\!max\max \def\max{\displaystyle\!max}
\let\!min\min \def\min{\displaystyle\!min}
\let\!frac\frac \def\frac{\displaystyle\!frac}
\catcode`!=12

\nc{\gam}{{\gamma}}
\nc{\GG}{{\mathbb G}}
\nc{\PP}{{\mathbb P}}
\nc{\gG}{{\Gamma}}
\nc{\om}{{\omega}}
\nc{\vep}{{\varepsilon}}
\nc{\ga}{{\alpha}}
\nc{\gl}{{\lambda}}
\nc{\gb}{{\beta}}
\nc{\gd}{{\delta}}
\nc{\gf}{{\varphi}}
\nc{\gs}{{\sigma}}
\nc{\gk}{{\kappa}}
\nc{\gS}{\Sigma}
\let\oldsection\section
\renewcommand\section{\setcounter{equation}{0}\oldsection}

\allowdisplaybreaks

\DeclareMathOperator*{\dep}{dep}

\DeclareMathOperator{\dch}{dch}
\DeclareMathOperator{\wth}{wth}

\nc\UU{\mbox{\bfseries U}}
\nc\FF{\mbox{\bfseries \itshape F}}
\nc\h{\mbox{\bfseries \itshape h}}\nc\dd{\mbox{d}}
\nc\g{\mbox{\bfseries \itshape g}}
\nc\xx{\mbox{\bfseries \itshape x}}

\def\N{\mathbb{N}}
\def\Z{\mathbb{Z}}
\def\Q{\mathbb{Q}}

\def\reg{{\rm{reg}}}

\def\xx{\left(\frac{1-x}{1+x} \right)}

\def\ol{\overline}
\nc\divg{{\text{div}}}
\theoremstyle{plain}
\newtheorem{thm}{Theorem}[section]
\newtheorem{lem}[thm]{Lemma}

\newtheorem{cor}[thm]{Corollary}
\newtheorem{conj}[thm]{Conjecture}

\theoremstyle{definition}

\newtheorem{rem}[thm]{Remark}
\newtheorem{exa}[thm]{Example}

\nc{\cicc}[1]{{}_{{}^{ \bigcirc\hskip-1.2ex{#1}\hskip.3ex{}}}}
\nc{\cic}[1]{{}^{\bigcirc\hskip-1.15ex{\raisebox{-0.015cm}{\text{$\scriptscriptstyle #1$}}}\hskip.25ex{}}}
\nc{\ccic}[1]{{}^{\bigcirc\hskip-1.5ex{\raisebox{-0.015cm}{\text{$\scriptscriptstyle #1$}}}\hskip.25ex{}}}
\nc{\ncic}[1]{ {\bigcirc\hskip-1.6ex{\raisebox{-0.0cm}{\text{$\scriptstyle #1$}}}\hskip.25ex{}}}
\nc{\nncic}[1]{ {\bigcirc\hskip-2ex{\raisebox{-0.0cm}{\text{$\scriptstyle #1$}}}\hskip.25ex{}}}
\nc{\cci}[1]{{}_{{}^{ {\textstyle \bigcirc}\hskip-2.05ex{#1}\hskip-.35ex{}}}}
\nc{\ccicc}[1]{{}_{{}^{ {\textstyle \bigcirc}\hskip-1.55ex{#1}\hskip-0.1ex{}}}}
\nc{\x}{\rm{x}}
\nc{\tworow}[2]{\left(#1 \atop #2\right)}
\nc{\tworowT}[2]{\left(\left.{#1 \atop #2}\right|T\right)}

\nc{\fl}{{\mathfrak l}}
\nc{\fm}{{\mathfrak m}}

\begin{document}
\title{\bf Unramified Motivic Multiple Mixed Values}
\author{
{Ce Xu${}^{a,}$\thanks{Email: cexu2020@ahnu.edu.cn, ORCID 0000-0002-0059-7420.}\ \ and Jianqiang Zhao${}^{b,}$\thanks{Email: zhaoj@ihes.fr, corresponding author, ORCID 0000-0003-1407-4230.}}\\[1mm]
\small a. School of Mathematics and Statistics, Anhui Normal University, Wuhu 241002, PRC\\
\small b. Department of Mathematics, The Bishop's School, La Jolla, CA 92037, USA
}

\date{}
\maketitle
\noindent{\bf Abstract.} The multiple mixed values (MMVs) are level two variants of multiple zeta values produced by
restricting the summation indices to fixed parity patterns. One particularly interesting problem is to determine exactly when
such values are actually in level one, namely, expressible in terms of multiple zeta values. To solve this completely is
beyond our current knowledge since it calls for new ideas from transcendental number theory. However, much progress has been
made on the motivic level. Previously, using the descent theory developed by Brown et al. we have tackled this problem for a
few special classes of MMVs with regular parity patterns among the  summation indices, including Hoffman's multiple $t$-values, Kaneko and Tsumura's multiple $T$-values, and our own multiple $S$-values. In this paper, we turn to the general case and determine completely all the unramified motivic MMVs of depth less than four as well as a few families of unbounded depths. At the end of the paper, we will present some general conjectures to describe the unramified MMVs at all depths greater than three.

\medskip
\noindent{\bf Keywords:} (motivic) multiple zeta values, (motivic) Euler sums, (motivic) multiple mixed values, unramifiedness.

\medskip
\noindent{\bf AMS Subject Classifications (2020):} 11M32, 11G99, 14E18, 18M25.
 
\medskip
\noindent
\textbf{Acknowledgement.} This work was started and completed while both authors visited the Tianyuan Mathematical Center
in Southeast China, supported by the funding number 12526102, during the summer of 2026. They heartily thank their hosts Prof. L. Lai and Prof. H. Zhu for the invitation.

\section{Introduction}
\subsection{Background and motivation}

In recent years, the study of multiple zeta values has developed into a vibrant area of modern mathematics, bridging number
theory, algebraic geometry, and theoretical physics. For \emph{admissible} indices $\bfs=(s_1,\ldots,s_d)\in \N^d$ with
$s_d\ge 2$, the \emph{multiple zeta values} (MZVs) are defined by
\begin{equation}\label{equ:defnMZV}
\zeta(s_1,\ldots,s_d):=\sum_{0<n_1<\cdots<n_d}\frac{1}{n_1^{s_1}\cdots n_d^{s_d}},
\end{equation}
and they exhibit a remarkably rich structure of $\Q$-linear relations that continues to captivate researchers. We refer to
$\dep(\bfs)=d$ as the depth and $|\bfs|=s_1+\cdots+s_d$ as the weight. Despite its substantial progress driven by the theory
of mixed Tate motives over $\Q$, as developed by Deligne, Goncharov, Brown, and others, the celebrated conjectures of Zagier
and Hoffman concerning the $\Q$-vector space spanned by MZVs remain open.

A key turning point has been the geometric realization of MZVs as iterated integrals on $\mathbb{P}^1\setminus\{0,1,\infty\}$,
which has proved indispensable for uncovering their motivic origin:
\begin{equation*}
\zeta(s_1,\ldots,s_d)=\int_0^1  \tb\ta^{s_1-1} \cdots  \tb\ta^{s_d-1},
\end{equation*}
where $\ta=dt/t$ and $\tb=dt/(1-t)$. This viewpoint, originally suggested by Kontsevich, interprets MZVs as periods of mixed
Tate motives. Within this motivic framework, the action of the motivic Galois group offers a powerful language for analyzing
the algebraic relations among these values. A crucial observation is that MZVs form an algebra: the product of two iterated
integrals satisfies the shuffle product formula, which greatly enriches their algebraic structure, complementing the already
apparent stuffle product structure arising directly from their series definition.

\subsection{Level two variants: multiple mixed values}\label{subsec:VarDef}

Beyond classical MZVs, a rich landscape of variants has emerged by restricting summation indices to fixed parity patterns.
These level two variants include:

\begin{enumerate}[label=\upshape{(\arabic*)},leftmargin=2cm]
    \item Hoffman's multiple $t$-values: Defined by restricting indices to odd integers \cite{Hoffman2019}:
\begin{equation*}
t(\boldsymbol{k}):=\sum_{\substack{0<n_1<\cdots<n_d\\ n_j:\text{ odd}}}\frac{2^d}{n_1^{s_1}\cdots n_d^{s_d}}.
\end{equation*}

\item Kaneko and Tsumura's multiple $T$-values \cite{KanekoTs2020}: Defined by imposing the congruence condition $n_j \equiv
    j \pmod{2}$:
\begin{equation*}
T(s_1,\ldots,s_d):=\sum_{\substack{0<n_1<\cdots<n_d\\ n_j\equiv j\pmod{2}}}\frac{2^d}{n_1^{s_1}\cdots n_d^{s_d}}.
\end{equation*}

\item Multiple $S$-values: Defined by the condition $n_j \equiv j+1 \pmod{2}$, studied previously by the current authors
    \cite{XuZhao2020a}.

\item More generally, multiple mixed values (MMVs) (see  \cite{XuYanZhao2022Aug,XuZhao2020a,XuZhao2023Aug}) allow all
    possible parity patterns, forming a subspace of the $\Q$-space of alternating multiple zeta values (Euler sums). These
    MMVs encompass the above variants as special cases and possess elegant algebraic structures including stuffle, shuffle,
    and duality relations analogous to those of ordinary MZVs.
\end{enumerate}
All the above values can be expressed as $\Z$-linear combinations of Euler sums which are defined by
\begin{equation*}
\zeta(s_1,\ldots,s_d;\eps_1,\dots,\eps_d):=\sum_{0<n_1<\cdots<n_d}\frac{\eps_1^{n_1}\cdots \eps_d^{n_d}}{n_1^{s_1}\cdots
n_d^{s_d}}
\end{equation*}
for $(s_1,\ldots,s_d)\in {\N}^d, (\eps_1,\dots,\eps_d)\in\{\pm1\}^d$, with $(s_d,\eps_d)\ne(1,1).$ We often suppress the
$\eps_j$'s with a short-hand notation by putting a bar on top of $s_j$ if $\eps_j=-1$. For example, $\zeta(\bar 1)=\zeta(1;
-1)$ is the alternating harmonic series. Like MZVs, these Euler sums also have two product structures, one from their iterated
integral expressions (leading to their motivic version)
\begin{equation*}
\zeta(s_1,\ldots,s_d;\eps_1,\dots,\eps_d)=\int_0^1 \tx_{\eps_1\dots\eps_d} \ta^{s_1-1} \tx_{\eps_2\dots\eps_d}
\ta^{s_2-1}\cdots \tx_{\eps_d} \ta^{s_d-1}
\end{equation*}
where $\tx_\eta=dt/(\eta-t)$,
and the other from the stuffle product of their series expressions. For example, for $a,b\in \N\cup \ol{\N}$ where the set $\ol{\N}:=\{\ol{n}:n\in\N\}$, we have the stuffle product
\begin{equation*}
  \zeta(a)\zeta(b)= \zeta(a,b)+ \zeta(b,a)+ \zeta(a\oplus b),
\end{equation*}
where for any two positive integers $m, n$ we set $m\oplus n=\ol{m}\oplus \ol{n}:=m+n$ and $\ol{m}\oplus n=m\oplus
\ol{n}:=\ol{m+n}$.

\subsection{The unramifiedness problem}

A central question in this area concerns the relationship between level two variants and classical MZVs. Following
Broadhurst's terminology, certain Euler sums that lie in the $\Q$-span of MZVs are called ``honorary MZVs''. Motivated by the
Galois descent theory developed by Glanois in her thesis \cite{Glanois2015}, such values (resp.\ their motivic counterparts) are more
systematically referred to as ``unramified'': they are expressible as $\Q$-linear combinations of MZVs (resp. motivic MZVs).

The problem of determining which MMVs are unramified, i.e.,  those which descend to level one, presents a significant
challenge. At the analytic level, this requires proving explicit identities relating Euler sums to MZVs. For example, the
beautiful family discovered numerically by Borwein et al. \cite{BorweinBrBr1997} and proved by the second author
\cite{Zhao2010a},
$$8^\ell \zeta(\{1,\bar{2}\}_\ell)=\zeta(3_\ell) \quad \text{for all } \ell\in\N,$$
exemplifies the kind of deep arithmetic structure at play. Here, for any string $\bfs$ we denote by $\{\bfs\}_\ell$ the
concatenation of $\bfs$ by itself exactly $\ell$ times, and we may drop the curly brackets for a singleton. Another result
which will be used in this paper is of two-one formula type (by taking $l=1$, $\bfs=(n)$ in \cite[Theorem~1.4]{Zhao2013a}): for
all positive integer $n\ge 2$
\begin{align}\label{equ:1nBar2}
\zeta^\sharp(\{1\}_{n-2}, \ol{2})=-\zeta^\star(n)=-\zeta(n).
\end{align}
Here $\zeta^\star$ is defined similarly as in \eqref{equ:defnMZV} except that one allows equalities among the summation
indices, and
for all  $s_1,\dots,s_d\in \N\cup \ol{\N}$, the Euler $\sharp$-sums
\begin{equation*}
\zeta^\sharp(s_1,\dots,s_d)= \sum_{\bft= (s_1\circ \dots \circ s_d) } 2^{\dep(\bft)} \zeta(\bft),
\end{equation*}
where $\circ=$``,'' or ``$\oplus$''. For example, $\zeta^\sharp(\ol{2},3)=2\zeta(\ol{5})+4\zeta(\ol{2},3)$. It is clear that
the Euler $\sharp$-sums are essentially the interpolated Euler sums at $t=1/2$ (the MZV version was first considered by
Yamamoto \cite{Yamamoto2012b}), namely, $\zeta^\sharp(\bfs)=2^{\dep(\bfs)}\zeta^{1/2}(\bfs).$

The complete resolution of the unramified problem at the analytic level appears beyond current knowledge, as it would require
new ideas from transcendental number theory, however, substantial progress has been made at the motivic level.

\subsection{Motivic approach and descent theory}

The motivic framework for level two variants builds upon Deligne--Goncharov's theory of mixed Tate motives which was further developed by Brown (see
\cite{Brown2012,Deligne2010,DeligneGo2005}). For Euler sums and their parity-restricted variants, one can define motivic versions
denoted by $\zeta^{\mathfrak{m}}$ that serve as lifts of the analytic values to a suitable motivic Hopf algebra.

The key tool in determining unramifiedness is the descent criterion developed by Brown \cite{Brown2012} and Glanois
\cite{Glanois2015,Glanois2016}. This criterion provides a necessary and sufficient condition for a motivic Euler sum to be
unramified: essentially, the motivic coaction must preserve a certain stable subspace corresponding to MZVs. This reduces the
problem to verifying a finite-dimensional linear algebra condition on the motivic coproduct.

Previous work has applied this machinery successfully to special classes of MMVs. For Hoffman's multiple $t$-values, Murakami
\cite{Murakami2021} and Charlton \cite{Charlton2021} have identified several families of unramified MtVs, including all MtVs
with components at least 2. More such families can be found in the authors' related work \cite{XuZhao2023Aug}.  There, the authors have derived necessary and sufficient conditions for unramifiedness of the motivic MSVs and MTVs when the depth is less than four, partially confirming a conjecture of Kaneko and Tsumura.
Tsumura.

\subsection{Main results}

In this paper, we turn to the general setting of multiple mixed values (MMVs). Our goal is three-fold: first, to determine
completely all unramified motivic MMVs in depth less than four; second, to identify several new families of unramified MMVs of
unbounded depth; third, to present some general conjectures on all possible unramified motivic MMVs in large depths.

Specifically, building on the descent theory established in prior works, we provide:

\begin{enumerate}[label=\upshape{(\arabic*)},leftmargin=2cm]
    \item
 Complete classification for motivic MMVs of depth $d \leq 3$ that are unramified, giving necessary and sufficient
 conditions in terms of the index parity patterns (see Theorem~\ref{thm:triUnramMMV}).

\item  Two infinite families of unramified motivic MMVs of almost extremal height, extending known results for MMVs of
    regular parity patterns (see Theorem~\ref{thm:UnramMMVfamily1} and \ref{thm:UnramMMVfamily2}).

\item  Explicit identities (conditional on an analytic conjecture in one case) expressing each motivic value in a height one
    family, with $d$ members in every even depth $d$, as a $\Q$-linear combination of motivic MZVs (see
    Theorem~\ref{thm:UnramMMVfamily3} and \ref{thm:assumConjMotivic}).

\item  Conjectures \ref{conj:unramMTVdual}-\ref{conj:depth>5MMVs} on all possible unramified motivic MMVs in arbitrary
    depths, based on our extensive numerical experiment.
\end{enumerate}

These results represent a significant step toward understanding the general MMV landscape. The classification theorems
establish, under Grothendieck's period conjecture, complete characterizations of unramifiedness for low-depth cases,
confirming and extending conjectures of Kaneko and Tsumura and others.

\subsection{Outline of the Paper}

The remainder of this paper is organized as follows. In Section~\ref{sec:setup}, we establish the necessary definitions and notation for
MMVs and their motivic counterparts after a quick review of the set-up to apply the Galois descent, including the criterion of Brown and Glanois on
determining when Euler sums are unramified. Section~\ref{sec:lemmas} is a brief
summary of a few important previous results we will need throughout the paper. Section~\ref{sec:UnramDepth3} presents the classification results for unramified MMVs of depth three, with detailed proofs for the various parity patterns. Section~\ref{sec:RamDepth3} will be devoted to show that motivic MMVs of depth
three not covered in Section~\ref{sec:UnramDepth3} are all ramified, namely, there are only three unramified motivic MMVs having irregular parity patterns. This section is the most technical part of the paper and we will present a complete proof for one pattern and then
provide all the necessary key steps for the other three patterns. Section~\ref{sec:2family} constructs two infinite unramified families of
arbitrary depth $\ge 2$ and almost extremal height while Section~\ref{sec:htOneFamily} provides one more unramified family but with height equal to one. Moreover, in every even depth $d$ we determine $d$ such unramified height one motivic MMVs with their explicit
expressions in terms of motivic MZVs with one exception (where the expression is only conjectural). Finally, Section~\ref{sec:Conj}
discusses open problems and conjectures suggested by our results and extensive numerical experiments, including potential
generalizations and connections to the ramification theory of more general variations of MZVs such as the alternating MMVs
\cite{XuYanZhao2022Aug} and multiple Clausen values and multiple Glaishers values at level three \cite{BorweinBrKa2001}.

\section{Notation and the motivic setup}\label{sec:setup}
\subsection{Multiple mixed values}
We first remark that all the variants of MZVs in Subsection~\ref{subsec:VarDef}(1)-(4) are level two cases of the objects
considered by Yuan and the second author in \cite[(2.1)]{YuanZh2014a} where they defined more generally the (congruence) MZVs
of level $N$ by restricting the summation indices to a fixed sequence of congruence classes modulo $N$. At level two, for any
admissible indices of positive integers $\bfs=(s_1,\ldots,s_d)$ (admissible means $s_d\ge 2$) and $\bfeps=(\eps_1, \dots,
\eps_d)\in\{\pm 1\}^d$, we define in \cite{XuZhao2020a} the \emph{multiple mixed values} or \emph{multiple $M$-values} (MMVs)
by
\begin{equation}\label{equ:MMVdefn}
M(\bfs;\bfeps):=\sum_{0<m_1<\cdots<m_d} \frac{(1+\eps_1(-1)^{m_1}) \cdots (1+\eps_d(-1)^{m_d})}{m_1^{s_1} \cdots
m_d^{s_d}}=\sum_{\substack{0<n_1<\cdots<n_d\\ 2| n_j \text{ if } \eps_j=1 \\ 2\nmid n_j \text{ if } \eps_j=-1
}} \frac{2^r}{n_1^{s_1} \cdots n_d^{s_d}}.
\end{equation}
As usual, we call $|\bfs|:=s_1+\cdots+s_d$ and $d$ the \emph{weight} and \emph{depth}, respectively.
For convenience, we say the \emph{(parity) signature} of $s_j$ is \emph{even} or \emph{odd} depending on whether $\eps_j$ is 1
or $-1$. For brevity, we put a check on top of the component $s_j$ if $\eps_j=-1$. For example,
\begin{align*}
M(2,\check{3})=\sum_{0<n_1<n_2} \frac{(1+(-1)^{n_1}) (1-(-1)^{n_2})}{n_1^2 n_2^3}
= \sum_{0<m<n} \frac{4}{(2m)^2 (2n-1)^3}.
\end{align*}

Further, every MMV has an iterated integral expression as follows. Setting $\eps_0=1$,
\begin{equation}\label{defn:oms}
\om_0:=\tx_0=\frac{dt}t,\quad \om_{-1}(t):=\frac{2dt}{1-t^2}=\tb-\tc, \quad\om_1(t):=\frac{2tdt}{1-t^2}=\tb+\tc,
\end{equation}
where $\tx_\eta=dt/(\eta-t)$ for $\eta=\pm1$ as before, then
\begin{align}
M(\bfs;\bfeps)=&\, \int_0^1 \om_{\eps_1}\om_0^{s_1-1}
\om_{\eps_1\eps_2}\om_0^{s_2-1} \cdots \om_{\eps_{d-1}\eps_d} \om_0^{s_d-1}, \notag\\
=&\, \sum_{\eta_1,\dots,\eta_d=\pm 1} \bigg(\prod_{j=1}^d \eta_j^{\scriptstyle \frac{\eps_j-\eps_{j-1}}2} \bigg)\int_0^1
\tx_{\eta_1}\ta^{s_1-1}
\tx_{\eta_2}\ta^{s_2-1} \cdots \tx_{\eta_d}\om_0^{s_d-1}. \label{equ:MMVint}
\end{align}

\subsection{Motivic setup}

Motivic setup developed in \cite{Brown2012,Glanois2016} will be enforced throughout this paper. As we will be concerned only
with level 2, we will recall briefly the main framework in this case. Let $w\in\N$ and suppose $a_j\in\{0,\pm 1\}$ for all $0\le
j\le w+1$. Then one can define the motivic integrals $I^\fm(a_0;a_1,\dots,a_w;a_{w+1})$ as symbols satisfying a list of
axioms. The key property of these motivic integrals is that there exists a period map $\dch$ defined by
\begin{equation}\label{equ:periodMap}
\dch \big(I^\fm(a_0;a_1,\dots,a_w;a_{w+1})\big):=(-1)^{\dep(a_1,\dots,a_w)}
\int_{a_0}^{a_{w+1}} \frac{dt}{t-a_1} \cdots \frac{dt}{t-a_w}
\end{equation}
as an iterated integral whenever it converges (to some Euler sums, see \cite[Ch.\ 14]{Zhao2016}).
Here $\dep(a_1,\dots,a_w)$ is the number of nonzero components in $(a_1,\dots,a_w)$. This should be modified if the integral
diverges, as we will see below.

Let $\calH^2$ be the $\Q$-vector space spanned by the \emph{motivic Euler sums} of the form
\begin{equation*}
\zeta^\fm_a\tworow{s_1,\dots,s_d}{\eps_1,\dots,\eps_d}=\zeta^\fm_a(s_1,\dots,s_d;\eps_1,\dots,\eps_d):=
I^\fm(0;0_a,\eta_1,0_{s_1-1},\dots,\eta_d,0_{s_d-1};1),
\end{equation*}
where $a\in\N_0:=\N\cup\{0\}$, $s_j\in\N, \eps_j=\pm 1$, $\eta_j=\eps_j\cdots \eps_d$ for all $j=1,\dots,d$.
We simply write $\zeta^\fm$ for $\zeta^\fm_0$.
These are the motivic MZVs when $\eps_j=1$ for all $j$, which together span the level one motivic MZV space denoted by
$\calH^1$. The motivic integrals satisfy a list axiom which can be found in \cite[\S2.4]{Brown2012}, \cite[p.~9]{Glanois2016}
or \cite[\S2, (I1)-(I6)]{Murakami2021}). We will omit these to avoid repetitiveness. In this setting, we have a more concrete
description of the period map $\dch$: if $f\in\calH_n^2$ with a diverge image under \eqref{equ:periodMap} but finite
shuffle-regularized value $f^\reg$, then $\dch(f)=f^\reg$. For example, the shuffle-regularized value of a non-admissible
Euler sum
\begin{equation*}
\zeta^\reg\tworow{\ldots,1}{\ldots,1}:={\L}_\sha(\ldots,1|\ldots,1;T)|_{T=0}
\end{equation*}
where ${\L}_\sha(\ldots,1|\ldots,1;T)$ is defined in \cite[Prop.~13.3.8]{Zhao2016}.

Let $N=1$ or 2. Set $\calA^N=\calH^N/\zeta^\fm(2)\calH^N$. Denote by
$\calH^N_w$ and $\calA_w^N$ their weight $w$ part for all $w\ge 0$. Let $\calL^N=\calA^N_{>0}/\calA^N_{>0}\cdot \calA^N_{>0}$.
For any weight $w$ and odd $r$ such that $0<r<w$ by modifying the coproduct of certain Hopf algebra
one can define a derivation as part of a coaction
$$
D_r: \calH_w^N \to \calL_r^N \ot \calH_{w-r}^N
$$
by sending $I^\fm(a_0;a_1,\dots,a_w;a_{w+1})$ to
$$
\sum_{p=0}^{w-r} I^\fl(a_p; a_{p+1},\dots,a_{p+r};a_{p+r+1})\ot I^\fm(a_0;a_1,\dots,a_p,a_{p+r+1},\dots,a_w;a_{w+1}).
$$
Each summand provides a choice of a consecutive subsequence from $(a_0,\dots,a_{w+1})$ and is therefore called a \emph{cut}.

The following theorem plays the pivoting role in the descent theory of Euler sums. It contains both Glanois's result \cite[Cor.\
2.4]{Glanois2016} and Brown's \cite[Thm.\ 3.3]{Brown2012}. Set $\calH=\calH^1$ and $\calL=\calL^1$.

\begin{thm}\label{thm:Glanois}
Let $a\in\N_0$, $\bfeps\in\{\pm 1\}^d$, and $\bfs\in\N^d$ such that $a+|\bfs|=w$.
Then the weight $w$ motivic Euler sum $\zeta_a^\fm\tworow{\bfs}{\bfeps}\in\calH_w$ if and only $D_1
\zeta_a^\fm\tworow{\bfs}{\bfeps}=0$ and
$D_r \zeta_a^\fm\tworow{\bfs}{\bfeps}\in \calL_r\ot \calH_{w-r}$ for all odd $r<w$.
Moreover,  $  \bigcap_{r<w} \ker ( D_r:\calH_w^2 \to \calL_r^2 \ot \calH_{w-r}^2) =\zeta^\fm(n)\Q$.
\end{thm}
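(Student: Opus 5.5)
This theorem is a repackaging of results already in the literature, and we would prove it by specializing those results to level $N=2$ and the coaction $D_r$ defined above. The argument has three parts: the structure theorem for motivic Euler sums, the injectivity of the infinitesimal coaction modulo depth-one values, and Glanois's Galois-descent identification of $\calH^1$ inside $\calH^2$.

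\emph{Structure of $\calH^2$.} By Deligne's theorem on mixed Tate motives over $\Z[1/2]$, $\calH^2$ is isomorphic, non-canonically, to
\[
\calU^2\ot_\Q \Q[\zeta^\fm(2)].
\]
Here $\calU^2$ is the shuffle algebra on generators $f_1$ and $f_3,f_5,\dots$, and $f_1$ corresponds to $\log^\fm 2$. Under this isomorphism the operators $D_r$ become the derivations that deconcatenate the leftmost letter $f_r$. We would therefore take Brown's lemma as the standing input: an element of weight $w$ killed by all $D_r$ with $r<w$ must be a rational multiple of $\zeta^\fm(w)$.

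\emph{The kernel statement.} In the model $\calU^2\ot\Q[\zeta^\fm(2)]$, the common kernel of all left-deconcatenations $\partial_{f_r}$ with $r<w$ consists of the weight-$w$ elements of the form $c\,f_w+q\,\zeta^\fm(2)^{w/2}$. For even $w$, $\zeta^\fm(2)^{w/2}$ is a rational multiple of $\zeta^\fm(w)$. For odd $w$, $f_w$ corresponds to $\zeta^\fm(w)$ up to a decomposable-free normalization. This gives $\bigcap_{r<w}\ker D_r=\zeta^\fm(w)\Q$, where the $n$ in the statement is the weight $w$.

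\emph{The descent criterion.} The ``only if'' direction is immediate: if the element lies in $\calH_w=\calH^1_w$, the coaction preserves $\calH^1$, so $D_r$ lands in $\calL^1_r\ot\calH^1_{w-r}$. Moreover $D_1=0$, since $\calL^1_1=0$ ($\zeta^\fm(1)$ is regularized to zero and there is no $\log 2$ at level one).

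For the ``if'' direction we induct on the weight $w$. The hypotheses, together with the induction hypothesis applied to the right-hand tensor factors, show that $D_{<w}\xi$ equals $D_{<w}\eta$ for some $\eta\in\calH^1_w$. This uses Brown's surjectivity statement: the level-one operators $D_r$, for odd $r\ge3$, realize every compatible family of elements of $\calL^1_r\ot\calH^1_{w-r}$. Since $D_1$ vanishes on $\xi$, no $f_1$-component appears. Applying the kernel statement to $\xi-\eta$ gives $\xi-\eta\in\Q\,\zeta^\fm(w)\subset\calH^1_w$, so $\xi\in\calH^1_w$. This is exactly Glanois's Cor.~2.4, combined with Brown's Thm.~3.3.

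The main obstacle is the surjectivity and compatibility step: one has to produce $\eta$ so that all the $D_r\eta$ match simultaneously. We would handle it through the $f$-alphabet model. Whenever $D_1\xi=0$ and every $D_r\xi$ has level-one coefficients, the image of $\xi$ in $\calU^2$ is a word in $f_3,f_5,\dots$ alone, and such words are exactly the image of $\calH^1$.
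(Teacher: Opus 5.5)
Your proposal is correct and takes the same route as the paper, which gives no proof of its own and simply invokes Glanois's Cor.~2.4 together with Brown's Thm.~3.3. Your closing $f$-alphabet argument is exactly the reasoning behind those citations: choose $\phi^2$ so that on $\calH^1$ it restricts to Brown's isomorphism $\calH^1\cong\calU^1\ot\Q[f_2]$, and read $\partial_{f_r}$ as $D_r$ followed by projecting $\calL_r$ onto its $f_r$-line, since $\calL_r$ need not be one-dimensional; this makes your inductive ``surjectivity'' step superfluous. The only caveat is that the kernel statement, and your identification of $f_w$ with $\zeta^\fm(w)$, require $w\ge 2$, because $\calH^2_1=\Q\log^\fm 2$ while $\zeta^\fm(1)=0$.
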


For all $a\in\N_0$ and $\bfs=(s_1,\dots,s_d)\in\N^d$, motivated by \eqref{equ:MMVint} we define the motivic MMVs by
\begin{equation}
M^\fm(\bfs;\bfeps)=  \sum_{\eta_1,\dots,\eta_d=\pm 1} \bigg(\prod_{j=1}^d \eta_j^{\scriptstyle \frac{\eps_j-\eps_{j-1}}2}
\bigg)
I^\fm(0;\eta_1,0^{s_1-1},\eta_2,0^{s_2-1}, \ldots,\eta_d,0^{s_d-1};1). \label{equ:defnMotivicMMV}
\end{equation}
Then, $\dch M^\fm(\bfs)=M(\bfs)$ for all admissible $\bfs$. Moreover, it follows easily that the  motivic multiple $t$-, $T$-,
$S$- and $\tz$-values are given by \cite[(2.2)-(2.5)]{XuZhao2023Aug}, respectively. We say these MMVs have \emph{regular
parity patterns}. Here,
$\tz^\fm(\bfs)=2^{\dep(\bfs)-|\bfs|} \zeta^\fm(\bfs)$ is related to the MZV in \eqref{equ:defnMZV} when we only allow even
indices to appear.
We can further define, for any $a\in\N_0$,
\begin{align}\label{defn:MMtV}
M^\fm_a(\bfs)=&\, \sum_{\eta_1,\dots,\eta_d=\pm 1} \bigg(\prod_{j=1}^d \eta_j^{\scriptstyle \frac{\eps_j-\eps_{j-1}}2} \bigg)
I^\fm(0;0_a,\eta_1,0^{s_1-1},\eta_2,0^{s_2-1}, \ldots,\eta_d,0^{s_d-1};1).
\end{align}

\section{Some technical lemmas}\label{sec:lemmas}

We will need the following results. The proof of Lemma~\ref{lem:D1S1bfl} below can be found in the proof of \cite[Lemma~ 3.2,
Lemma~3.3, (5.2), (5.3), (5.7)]{XuZhao2023Aug}, \cite[Theorem~1 and Lemma~9]{Murakami2021} and \cite[Prop.~5.9]{Charlton2021}.
For convenience, we define the Kronecker symbol $\delta_\bullet=1$ if the condition $\bullet$ is true and $\delta_\bullet=0$
otherwise.

\begin{lem}\label{lem:D1S1bfl}
Assume $\bfs=(s_1,\dots,s_d)\in\N^d$ and $k\in\N$. Then
 \begin{enumerate}[label=\upshape{(\arabic*)},leftmargin=1cm]
 \item \label{lemCase:T(1)etc} $T^\fm(1)=t^\fm(1)=\log^\fm 2$, $\tz^\fm(1)=S^\fm(1)=-\log^\fm 2$.
 \item \label{lemCase:T=tz,S=tz} $T^\fm(k)=t^\fm(k)=(2^k-1)\tz^\fm(k)$, $S^\fm(k)=\tz^\fm(k)$ if $k\ge2$.
 \item \label{lemCase:D1MTVadm} $D_1 T^\fm(\bfs)=0$ if $\bfs$ is admissible.
 \item \label{lemCase:D1MTV(k,1)} $D_1 T^\fm(k,1)=\log^\fl 2 \ot T^\fm(k)\ne 0$.
 \item \label{lemCase:D1MSV(k,1)} $D_1 S^\fm(k,1)=\log^\fl 2 \ot S^\fm(k)\ne 0$.
 \item \label{lemCase:D1MSV} $D_1 S^\fm(\bfs)=-2\delta_{s_1=1} \log^\fl 2\ot T^\fm(s_2,\dots,s_d)$ if $s_d\ne1$.
 \item \label{lemCase:D1MtV} $D_1 t^\fm(\bfs)=  \log^\fl \ot \Big(2 \delta_{s_1=1}t^\fm(s_2,\dots,s_d) -\delta_{s_d=1}
     t^\fm(s_1,\dots,s_{d-1}) \Big)$.
 \item \label{lem:MtVUnram} If all components of $\bfs$ are at least $2$ then $t^\fm(\bfs)$ is unramified.
 \item \label{lemCase:DTDS} If $a+b>0$ is even, then
\begin{align*}
D_{a+b+1} S^\fm(a+1,b+1)=&\, \bigg[\delta_{b=0}\tz^\fl(a+b+1)-(-1)^a 2\binom{a+b}{a}T^\fl(a+b+1)\bigg]\ot \log^\fm 2,\\
D_{a+b+1} T^\fm(a+1,b+1)=&\, \bigg[(-1)^a 2\binom{a+b}{a}-\delta_{b=0}\bigg]T^\fl(a+b+1)\ot \log^\fm 2,\\
D_{a+b+1} t^\fm(a+1,b+1)=&\, \gd_{b=0}T^\fl(a+b+1)  \ot \log^\fm 2.
\end{align*}
In particular, all motivic double $S$ and $T$-values of even weights are ramified.
 \item \label{Murakami-lemma9} Let $s_0,s_1,\dots,s_d\in\N_0$ and $\ga,\gb=\pm 1$. Then
\begin{equation*}
    \sum_{\eta_j=\pm 1} I^\fl(\ga;0_{s_0}, \eta_1,0_{s_1}, \dots, \eta_d,0_{s_d};\gb)=0.
\end{equation*}
\end{enumerate}
\end{lem}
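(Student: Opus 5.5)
Lemma~\ref{lem:D1S1bfl} collects ten separate facts, and the paper points to \cite{XuZhao2023Aug,Murakami2021,Charlton2021} for their proofs. So I would not reprove it from scratch. Instead I would check each item against the definitions used here, namely \eqref{equ:defnMotivicMMV}, \eqref{defn:MMtV}, the formula for $D_r$ as a sum over cuts, and Theorem~\ref{thm:Glanois}. Every item then comes from one uniform computation. First I expand $t^\fm$, $T^\fm$, $S^\fm$, $\tz^\fm$ as signed sums of motivic integrals $I^\fm(0;\eta_1,0^{s_1-1},\dots;1)$. Then I apply $D_r$ cut by cut and collect terms using the sign factors $\prod_j \eta_j^{(\eps_j-\eps_{j-1})/2}$.

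\textbf{Items (1), (2) and (10).} Item \ref{Murakami-lemma9} comes first, since the others rely on it. I would prove it by the substitution $t\mapsto -t$ on the real period. At the motivic level this is the automorphism $I^\fm(a_0;a_1,\dots;a_{w+1})\mapsto I^\fm(-a_0;-a_1,\dots;-a_{w+1})$, which is valid in $\calL^2$. Summing over all $\eta_j$ then amounts to replacing each letter $\tx_{\pm1}$ by the even form $\tb+\tc$. The resulting $\fl$-integral between endpoints $\pm1$ either vanishes by path reversal combined with $t\mapsto -t$, or reduces to $\zeta(2)$-multiples, which are zero in $\calL$. For items \ref{lemCase:T(1)etc} and \ref{lemCase:T=tz,S=tz}, I work in depth one. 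Here $\int_0^1\om_{\pm1}\om_0^{k-1}$ is an explicit combination of $\zeta^\fm(k)$ and $\zeta^\fm(\bar k)$. The distribution relation $\zeta^\fm(\bar k)=(2^{1-k}-1)\zeta^\fm(k)$ for $k\ge2$, together with $\zeta^\fm(\bar1)=-\log^\fm2$, gives the stated identities.

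\textbf{$D_1$ formulas, items (3)--(7).} Only cuts of length one contribute. Such a cut is $I^\fl(a_p;a_{p+1};a_{p+2})$, and it is a multiple of $\log^\fl 2$ exactly when the endpoints and the middle letter form one of the patterns $(0;\pm1;\pm1)$ or $(\pm1;\pm1;0)$ with nontrivial value. After summing over the $\eta_j$, item~\ref{Murakami-lemma9} (at $r=1$) kills every interior cut lying between two $\om_0$ blocks. What survives comes only from the boundary: the first letter when $s_1=1$, and the last letter when $s_d=1$. Matching the sign factors then shows that the quotient integral is again a $T$-, $t$- or $S$-value with the end component removed. For example, the $2\delta_{s_1=1}$ term in item~\ref{lemCase:D1MtV} arises because the two choices of $\eta_1$ contribute equally. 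The non-vanishing statements in items \ref{lemCase:D1MTV(k,1)} and \ref{lemCase:D1MSV(k,1)} follow from item~\ref{lemCase:T=tz,S=tz}, since $T^\fm(k)$ and $S^\fm(k)$ are nonzero multiples of $\zeta^\fm(k)$.

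\textbf{Item (8) and the main obstacle, item (9).} Item \ref{lem:MtVUnram} is Murakami's theorem. I would obtain it by induction on depth via Theorem~\ref{thm:Glanois}. With all $s_j\ge2$, every cut quotient is again an MtV with all entries $\ge2$, which is unramified by induction. Every left factor is either killed by item~\ref{Murakami-lemma9} or is a depth-one $t^\fl$, which lies in $\calL$ by item~\ref{lemCase:T=tz,S=tz}.

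The hardest step is item~\ref{lemCase:DTDS}. Here I would enumerate every cut of length $a+b+1$ in the weight-$(a+b+2)$ word. Only cuts whose right factor has weight one can survive, and that right factor is $\log^\fm2$. Each left factor is a depth-two $\fl$-integral. I would reduce these to depth one by applying the parity and shuffle relations in $\calL$: products vanish there, and for even weight $a+b$ depth-two Euler sums reduce to depth one modulo products. The binomial coefficient $\binom{a+b}{a}$ comes from regularizing the shuffle of $0^{a}$ against $0^{b}$. Keeping exact track of the signs $(-1)^a$ and of the boundary $\delta_{b=0}$ term is the main obstacle, and I would check them against the known case $a+b=2$. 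Ramification of even-weight double $S$- and $T$-values then follows because the bracket is nonzero for $a+b>0$, whereas $\log^\fm 2\notin\calH$.
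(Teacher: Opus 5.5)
The paper gives no proof of this lemma; it cites \cite{XuZhao2023Aug,Murakami2021,Charlton2021}, where these facts come from exactly the cut-by-cut computation you propose. So your overall strategy is appropriate. Items (1) and (2) are fine as you describe them, but several of your later steps would not go through as written.

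\emph{Item (10).} Here $t\mapsto -t$ plus ``path reversal'' does not give vanishing. Path reversal reverses the word, so for $F(W)=\sum_\eta I^\fl(\alpha;W;-\alpha)$ you only get $F(W)=(-1)^{|W|}F(\overleftarrow{W})$. The fallback ``reduces to $\zeta(2)$-multiples'' is not an argument. The missing ingredient is path inversion in $\calL$. From
\[0=I^\fm(\alpha;W;\alpha)=\sum_{W=W'W''}I^\fm(\alpha;W';\beta)\,I^\fm(\beta;W'';\alpha)\]
you get $I^\fl(\beta;W;\alpha)=-I^\fl(\alpha;W;\beta)$, with the same word. After summing over the $\eta_j$, every form is invariant under $t\mapsto -t$. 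Hence $F=-F$ when $\beta=-\alpha$, and the case $\beta=\alpha$ is trivial.

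\emph{Items (3)--(7).} This is the more serious problem: your account of $D_1$ is wrong. Length-one cuts inside $\om_0$-blocks vanish trivially, not by item (10). Item (10) is an unweighted sum with both endpoints in $\{\pm1\}$. So it says nothing about cuts $(0;\eta_j;\eta_{j+1})$ or $(\eta_j;\eta_{j+1};0)$. Nor does it cover $(\eta_{j-1};\eta_j;\eta_{j+1})$ when the sign factor contains $\eta_j$, as it does for $T$- and $S$-values; there $\sum_{\eta_j}\eta_j I^\fl(\eta_{j-1};\eta_j;\eta_{j+1})\neq0$ in general. The nonzero length-one cuts are $I^\fl(a;\eta;c)=\pm\log^\fl 2$, where exactly one of $a,c$ equals $-\eta$. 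Your list misses cases such as $I^\fl(1;-1;-1)$. These cuts must be combined with neighbouring cuts.

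For example, in $T^\fm(1,2)$ the first-letter cut gives $\log^\fl 2\ot\tz^\fm(2)$. It is cancelled by the cut $(\eta_1;\eta_2;0)$, which gives $-\log^\fl 2\ot\tz^\fm(2)$. Your rule ``the first letter survives when $s_1=1$'' would therefore contradict item (3). For $t^\fm(1,s_2,\dots)$ with $s_2\ge2$, the same two cuts each give $\log^\fl 2\ot t^\fm(s_2,\dots)$, and in each only $\eta_1=-\eta_2$ contributes. This pairing of cuts, not ``two choices of $\eta_1$'', is the source of the factor $2$ in (7). Interior components equal to $1$ cancel through the same kind of pairing.

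\emph{Items (8) and (9).} These are smaller points. In (8), the left factors are not only depth-one $t^\fl$. Cuts starting at $a_0=0$ give lower-depth $t^\fl(s_1,\dots,s_{k-1})$, which need the induction hypothesis. Cuts with the sign factor outside give unweighted sums, which are level one by distribution. In (9), only two cuts exist: $p=0$ (nonzero only if $b=0$) and $p=1$. Both left factors have depth one and odd weight $a+b+1$. For $S$ and $T$, path composition through $0$ plus homothety reduce the $p=1$ factor to
\[\pm\sum_\eta\eta\, I^\fl(0;0_a,\eta,0_b;1)=\pm(-1)^a\binom{a+b}{a}T^\fl(a+b+1),\]
and the binomial comes from regularizing the leading zeros. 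No depth-two parity reduction is needed.
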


In Lemma~\ref{lem:D1S1bfl}\ref{Murakami-lemma9} and throughout the rest of this paper, to save space we will use the notation
\begin{equation*}
    \sum_{\eta_j= \pm 1}  (  \cdots  ) =  \sum_{(\eta_1,\ldots,\eta_d)\in \{\pm 1\}^d}  (  \cdots  ).
\end{equation*}

\begin{lem}\label{lem:DBMMVs} \emph{(\cite[Cor. 4.2]{XuZhao2023Aug})}
Suppose $m,n\in\N$ and $w =m + n$ is odd. Then
\begin{align*}
t^\fl(m,n)=-t^\fl(w),\quad
T^\fl(m,n)= (-1)^{n} \binom{w-1}{n} T^\fl(w), \quad
S^\fl(m,n)= (-1)^{n} \binom{w-1}{m} T^\fl(w).
\end{align*}
\end{lem}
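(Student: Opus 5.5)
The plan is to reduce everything to the explicit depth-two parity theorem for alternating double sums. We work in the Lie coalgebra $\calL^2_w$ with $w=m+n$ odd. Since $m,n\ge1$ we have $w\ge3$, so the depth-one part of $\calL^2_w$ is one-dimensional. Indeed, the distribution relation $\zeta^\fm(w)+\zeta^\fm(\bar w)=2^{1-w}\zeta^\fm(w)$ gives
\[
\zeta^\fl(\bar w)=(2^{1-w}-1)\zeta^\fl(w).
\]
By Lemma~\ref{lem:D1S1bfl}\ref{lemCase:T=tz,S=tz}, $T^\fl(w)$, $t^\fl(w)=T^\fl(w)$ and $S^\fl(w)=\tz^\fl(w)=2^{1-w}\zeta^\fl(w)$ are then all explicit rational multiples of one generator. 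Note that in $\calL$ we have $2^w-1\ne0$, so $T^\fl(w)$ can serve as that generator. It therefore suffices to show two things: that each $\zeta^\fl(m,n;\eps_1,\eps_2)$ of odd weight is a rational multiple of $\zeta^\fl(w)$, and what the multiple is.

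First, I expand $t^\fm(m,n)$, $T^\fm(m,n)$, $S^\fm(m,n)$ via \eqref{equ:defnMotivicMMV} as signed sums over $(\eta_1,\eta_2)\in\{\pm1\}^2$ of $I^\fm(0;\eta_1,0^{m-1},\eta_2,0^{n-1};1)$. I then rewrite these as $\pm$ the four double Euler sums $\zeta^\fm(m,n;\eps_1,\eps_2)$, using $\eps_1=\eta_1\eta_2$, $\eps_2=\eta_2$ and the sign from the period map \eqref{equ:periodMap}. The parity patterns differ only in the weights $\eta_j^{(\eps_j-\eps_{j-1})/2}$, so the three cases become three sign-twisted combinations of the same four quantities.

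Second, I reduce each $\zeta^\fl(m,n;\eps_1,\eps_2)$ modulo products and $\zeta^\fm(2)$. Two facts drive this. Products vanish in $\calL$, so the stuffle relation gives
\[
\zeta^\fl(a,b)+\zeta^\fl(b,a)=-\zeta^\fl(a\oplus b),
\]
and the shuffle relation gives the usual binomial relation. Together these relations (the motivic form of the Borwein--Bradley--Broadhurst/Glanois depth-parity result) express the double sum explicitly. Modulo products, the binomial terms produce
\[
\zeta^\fl(m,n;\eps_1,\eps_2)=c\,\zeta^\fl(w;\eps_1\eps_2),
\]
where $c$ is a combination of $(-1)^n\binom{w-1}{n}$, $(-1)^n\binom{w-1}{m}$ and $\pm\frac12$. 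I will take this formula from the literature, since it follows directly from the double shuffle relations plus distribution in $\calL^2$. I will double-check it against the case $w=3$, where everything is computable by hand.

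Third, I substitute these into the three combinations and use $\zeta^\fl(\bar w)=(2^{1-w}-1)\zeta^\fl(w)$ to collect everything as a multiple of $T^\fl(w)$. For $t$, the binomial contributions should cancel between the $\eta$-summands, leaving only the constant $-1$; this is consistent with $D_{w}t^\fm(a+1,b+1)$ in Lemma~\ref{lem:D1S1bfl}\ref{lemCase:DTDS}. For $T$ and $S$, a single binomial survives, and swapping which index carries the sign twist exchanges $\binom{w-1}{n}$ with $\binom{w-1}{m}$.

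The main obstacle is the bookkeeping in this last step: tracking the $2^{1-w}$ factors and the four sign patterns so that exactly the powers of $2$ cancel against $2^w-1$ and leave the clean coefficients. As a consistency check I would verify the coefficients against Lemma~\ref{lem:D1S1bfl}\ref{lemCase:DTDS}, since $D_w$ of a double value is essentially its image in $\calL_w$ tensored with the unit.
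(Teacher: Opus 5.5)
Your route (expand into the four double Euler sums, reduce each modulo products by the depth-two parity theorem, recombine) is the natural one. It is also how the cited corollary comes out of the parity reduction formula \cite[Prop.~4.1]{XuZhao2023Aug}. However, the proposal never states the one formula that carries all the content, and the shape you do give it is wrong in a way that matters. The reduction is not $\zeta^\fl(m,n;\eps_1,\eps_2)=c\,\zeta^\fl(w;\eps_1\eps_2)$ with $c$ assembled from the two binomials and $\pm\frac12$. For $w=m+n$ odd it is
\begin{equation*}
2\zeta^\fl(m,n;\eps_1,\eps_2)=-\zeta^\fl(w;\eps_1\eps_2)+(-1)^n\binom{w-1}{n}\zeta^\fl(w;\eps_1)+(-1)^n\binom{w-1}{m}\zeta^\fl(w;\eps_2),
\end{equation*}
so for instance $\zeta^\fl(1,\ol{2})=\frac12(\zeta^\fl(3)+\zeta^\fl(\ol{3}))$. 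For $\eps_2=-1$ this is exactly the modulo-products form of \cite[Prop.~4.1]{XuZhao2023Aug} recorded in Corollary~\ref{cor:DrOnDblEulerSums}. The case $(\eps_1,\eps_2)=(1,1)$ is the classical MZV case, and $(-1,1)$ follows from $(1,-1)$ by the stuffle relation modulo products. That same stuffle step is also what reduces the shuffle-regularized values with $n=1$, $\eps_2=1$ to convergent ones; your sketch does not address them. With a single trailing $1$, stuffle and shuffle regularization agree, so this is legitimate.

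What closes the argument is that the three depth-one terms carry the three nontrivial characters $\eps_1\eps_2$, $\eps_1$, $\eps_2$ of $\{\pm1\}^2$. Your own expansion, with $\eps_1=\eta_1\eta_2$ and $\eps_2=\eta_2$, writes $t^\fl(m,n)$, $T^\fl(m,n)$, $S^\fl(m,n)$ as $\sum_{\eps}\chi(\eps)\zeta^\fl(m,n;\eps_1,\eps_2)$ with $\chi=\eps_1\eps_2$, $\eps_1$, $\eps_2$ respectively. Since $\zeta^\fl(w;\psi)=\frac12(\zeta^\fl(w)+\zeta^\fl(\ol{w}))+\frac{\psi}{2}T^\fl(w)$, character orthogonality kills all but the matching term. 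This gives $-T^\fl(w)$, $(-1)^n\binom{w-1}{n}T^\fl(w)$ and $(-1)^n\binom{w-1}{m}T^\fl(w)$ at once, using only $t^\fl(w)=T^\fl(w)=\zeta^\fl(w)-\zeta^\fl(\ol{w})$. So your qualitative predictions in the third step are right. But the distribution relation and the powers of $2$ you single out as the main obstacle never enter; the actual work is establishing, or citing exactly, the display above.

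Finally, the proposed consistency check against Lemma~\ref{lem:D1S1bfl}\ref{lemCase:DTDS} does not test this statement. That item concerns $D_{w-1}$ on double values of \emph{even} weight, whose left factors are depth-one integrals $I^\fl(\pm1;0_a,\eta,0_b;1)$. Moreover, $D_r$ is only defined for $r<w$. So it says nothing about odd-weight double values in $\calL_w$.
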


\begin{lem}\label{lem:S_a(b+1,c+1)}
Let $a\in\N_0$, $m,n\in\N$  and let $w=a+m+n$ be odd. Then
\begin{align*}
S_a^\fl(m,n)= &\, (-1)^{a+m-1} \binom{w-1}{n-1} T^\fl(w),\\
T_a^\fl(m,n)= &\, (-1)^{a+m-1} \binom{w-1}{m-1} T^\fl(w),\\
t_a^\fl(m,n)= &\, -(-1)^{a} \binom{w-1}{a} T^\fl(w).
\end{align*}
\end{lem}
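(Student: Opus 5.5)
The plan is to reduce $S_a^\fl$, $T_a^\fl$ and $t_a^\fl$ to the double values with $a=0$, and then invoke Lemma~\ref{lem:DBMMVs}. The first step is to remove the leading block $0_a$ in \eqref{defn:MMtV} with the standard shuffle-regularization identity for motivic iterated integrals. Since $I^\fm(0;0;1)=0$ and the integrals are shuffle regularized, shuffling $0_a$ against the rest of the word gives, for each fixed $(\eta_1,\eta_2)$,
\begin{equation*}
I^\fm(0;0_a,\eta_1,0_{m-1},\eta_2,0_{n-1};1)=(-1)^a\sum_{i+j=a}\binom{m+i-1}{i}\binom{n+j-1}{j}I^\fm(0;\eta_1,0_{m+i-1},\eta_2,0_{n+j-1};1).
\end{equation*}
The coefficients do not depend on $(\eta_1,\eta_2)$. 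The sign factor $\prod_j\eta_j^{(\eps_j-\eps_{j-1})/2}$ in \eqref{defn:MMtV} also does not depend on $a$. So the same identity holds for $M_a^\fm(m,n;\bfeps)$ for every parity pattern $\bfeps$, and in particular for $S$, $T$ and $t$. It then passes to $\fl$ by projection to $\calL$. I would check the $a=1$ case directly to pin down the sign $(-1)^a$.

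The second step is to substitute Lemma~\ref{lem:DBMMVs}, which applies because the weight $a+m+n$ is odd. For $S$ this gives
\begin{equation*}
S_a^\fl(m,n)=(-1)^a\sum_{i+j=a}(-1)^{n+j}\binom{m+i-1}{i}\binom{n+j-1}{j}\binom{w-1}{m+i}T^\fl(w).
\end{equation*}
The product of the three binomials simplifies to
\begin{equation*}
\frac{(w-1)!}{(m-1)!\,(n-1)!\,i!\,j!\,(m+i)}.
\end{equation*}
The remaining sum is then
\begin{equation*}
\sum_{i+j=a}\frac{(-1)^j}{i!\,j!\,(m+i)}=\frac1{a!}\int_0^1 x^{m-1}(x-1)^a\,dx=\frac{(-1)^a(m-1)!}{(m+a)!}.
\end{equation*}
Putting these together gives $(-1)^n\binom{w-1}{n-1}T^\fl(w)$. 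Since $w$ is odd, $(-1)^n=(-1)^{a+m-1}$, which is the claimed formula.

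The cases $T$ and $t$ go the same way. For $T$ the binomial $\binom{w-1}{n+j}$ replaces $\binom{w-1}{m+i}$, and the roles of $(m,i)$ and $(n,j)$ swap. The Beta integral becomes $\int_0^1 x^{n-1}(1-x)^a\,dx$ up to sign, and this produces $\binom{w-1}{m-1}$. For $t$ every term equals $-t^\fl(w)$. The sum $\sum_{i+j=a}\binom{m+i-1}{i}\binom{n+j-1}{j}=\binom{w-1}{a}$ is Vandermonde's identity for negative binomials. One then uses $t^\fl(w)=(2^w-1)\tz^\fl(w)=T^\fl(w)$, by Lemma~\ref{lem:D1S1bfl}\ref{lemCase:T=tz,S=tz}, for $w\ge2$.

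The main obstacle I expect is the first step. It must be justified motivically, including the non-admissible terms with $n+j=1$, which occur only when $n=1$ and $j=0$. This is exactly where shuffle regularization is needed, and Lemma~\ref{lem:DBMMVs} must be valid for the regularized $n=1$ values, as its hypothesis $m,n\in\N$ indicates. Getting all the signs consistent, namely $(-1)^a$ from moving the zeros and $(-1)^{\dep}$ from $\dch$, is the delicate bookkeeping. The binomial sums are routine.
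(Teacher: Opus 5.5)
Your treatment of $S_a^\fl$ and $t_a^\fl$ is the paper's own argument and is correct. You expand the leading $0_a$ by shuffle regularization, substitute Lemma~\ref{lem:DBMMVs}, and finish with the Beta integral (for $S$) or the negative-binomial Vandermonde identity (for $t$). Your worry about the first step is unnecessary: the leading-zero expansion is standard, the paper simply writes ``by definition'', and Lemma~\ref{lem:DBMMVs} is stated for all $m,n\in\N$, so it covers the $n=1$ terms. For $T_a^\fl$ the paper does not recompute. It uses $T^\fl(m,n)=-S^\fl(n,m)$, which after the same expansion gives $T_a^\fl(m,n)=-S_a^\fl(n,m)$.

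The genuine gap is the phrase ``up to sign'' in your $T$ case, because that sign is exactly where the second identity breaks. Carrying out your computation gives
\[
T_a^\fl(m,n)=(-1)^{a+n}\frac{(w-1)!}{(m-1)!\,(n-1)!}\sum_{i+j=a}\frac{(-1)^j}{i!\,j!\,(n+j)}\,T^\fl(w).
\]
Here the alternating sign sits on the same index as the denominator. So the sum is $\frac1{a!}\int_0^1x^{n-1}(1-x)^a\,dx=\frac{(n-1)!}{(n+a)!}$, with no extra $(-1)^a$. In the $S$ case you got $\int_0^1x^{m-1}(x-1)^a\,dx$, and its $(-1)^a$ cancelled the prefactor; that does not happen here.

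Hence $T_a^\fl(m,n)=(-1)^{a+n}\binom{w-1}{m-1}T^\fl(w)=(-1)^{m-1}\binom{w-1}{m-1}T^\fl(w)$. This differs from the stated $(-1)^{a+m-1}\binom{w-1}{m-1}T^\fl(w)$ by $(-1)^a$. The paper's reflection route gives the same answer, since $-S_a^\fl(n,m)=-(-1)^{a+n-1}\binom{w-1}{m-1}T^\fl(w)$.

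A concrete check: shuffling with $I^\fm(0;0;1)=0$ gives $T_1^\fm(1,1)=-T^\fm(2,1)-T^\fm(1,2)$. Lemma~\ref{lem:DBMMVs} gives $T^\fl(2,1)=-2T^\fl(3)$ and $T^\fl(1,2)=T^\fl(3)$. Therefore $T_1^\fl(1,1)=T^\fl(3)$, whereas the stated formula predicts $-T^\fl(3)$, and $T^\fl(3)=\tfrac74\zeta^\fl(3)\neq0$.

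So the printed $T$-formula holds only for even $a$. A correct write-up must track this sign explicitly and conclude $T_a^\fl(m,n)=(-1)^{m-1}\binom{w-1}{m-1}T^\fl(w)$, rather than assert agreement with the statement.
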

\begin{proof}
Let $m=b+1$ and $n=c+1$. By definition and Lemma~\ref{lem:DBMMVs},
\begin{align*}
S_a^\fl(b+1,c+1)= &\, (-1)^a \sum_{i+j=a} \binom{b+i}{i}\binom{c+j}{j} S^\fl(b+i+1,c+j+1)\\
= &\, \sum_{i+j=a} (-1)^{a+b+i} \binom{b+i}{i}\binom{c+j}{j}\binom{w-1}{b+i+1} T^\fl(w)\\
= &\, \left. \frac{(-1)^{a+b} (w-1)!}{a!b!c!}  T^\fl(w) \sum_{i+j=a} (-1)^{i} \binom{a}{i}\frac{x^{b+i+1}}{b+i+1}
\right|_{x=1}\\
= &\,  \frac{(-1)^{a+b} (w-1)!}{a!b!c!}  T^\fl(w) \int_0^1 x^b (1-x)^a \, dx\\
= &\,  \frac{(-1)^{a+b} (w-1)!}{(a+b+1)! c!}  T^\fl(w)
\end{align*}
by the Euler beta integral. The formula for $T_a^\fl(b+1,c+1)$  follows immediately from the fact that
$T^\fl(m,n)=-S^\fl(n,m)$.

Turing to the $t$-values, from Lemma~\ref{lem:DBMMVs}  we have
\begin{align*}
t^\fl_a(b+1,c+1)=-(-1)^{a} \sum_{i+j=a} \binom{b+i}{i}\binom{c+j}{j} T^\fl(w)
=-(-1)^{a} \binom{w-1}{a} T^\fl(w)
\end{align*}
by the generalized Vandermonde identity.
This completes the proof of the lemma.
\end{proof}

\section{Unramified MMVs in depth three}\label{sec:UnramDepth3}
We will classify all unramified MMVs in depth three in this section. After dealing with the three special cases we will describe the general situation in Theorem~\ref{thm:triUnramMMV}. Then we will present the main body of the proof for the irregular parity cases in Section~\ref{sec:RamDepth3}.

\subsection{Three special unramified MMVs in depth three}\label{subsec:SpecialUnramDepth3}
In this subsection, we will prove the unramifiedness of three special MMVs. Later, we will see that these are the only unramified
MMVs with irregular parity patterns in depth three.

\begin{lem}\label{lem:3SpecialMMVs}
The motivic MMVs (i) $M^\fm(\check{1},\check{1},3)$, (ii) $M^\fm(\check{1},2,2)$ and (iii) $M^\fm(\check{1},4,4)$ are all
unramified.
\end{lem}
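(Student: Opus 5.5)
We apply the Galois descent criterion, Theorem~\ref{thm:Glanois}, to each of the three values. The weights are $w=5$ for (i) and (ii) and $w=9$ for (iii). So we must show that $D_1$ vanishes, and that $D_r$ lands in $\calL_r\ot\calH_{w-r}$ for $r=3$ in the first two cases and for $r=3,5,7$ in the third.

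\textbf{Setting up the cuts.} Starting from the definition \eqref{equ:defnMotivicMMV}, we write each $M^\fm(\bfs;\bfeps)$ as a signed sum over $\eta_1,\eta_2,\eta_3=\pm1$ of motivic iterated integrals $I^\fm(0;\eta_1,0^{s_1-1},\eta_2,0^{s_2-1},\eta_3,0^{s_3-1};1)$. We then apply the formula for $D_r$ cut by cut, grouping the cuts by which letters $\eta_j$ they swallow.

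\emph{Cuts lying strictly inside.} Suppose a cut lies strictly inside the word, has both endpoints in $\{\pm1\}$, and the summation over the swallowed $\eta_j$ is unconstrained by the sign factor. Then the left factors $I^\fl$ vanish by Lemma~\ref{lem:D1S1bfl}\ref{Murakami-lemma9}.

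\emph{Remaining cuts.} The left factors are motivic single or double $t$-, $T$-, $S$-values, possibly with leading zeros. By Lemmas~\ref{lem:DBMMVs} and \ref{lem:S_a(b+1,c+1)} these are explicit rational multiples of $T^\fl(r)$. For $r\ge3$ we have $T^\fl(r)=(2^r-1)\tz^\fl(r)$, which lies in $\calL_r$. For $r=1$ the left factor is $\log^\fl 2$. The right factors are motivic MMVs of depth at most two, or of the shape $M^\fm_a$, of weight $w-r$.

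\textbf{Checking $D_1$.} Here the left factors are $\pm\log^\fl 2$, coming from cuts of length one at the letters $\check 1$ and their neighbours, and from a possible terminal cut at $\eta_3$. We collect the right factors using Lemma~\ref{lem:D1S1bfl}\ref{lemCase:D1MSV}, \ref{lemCase:D1MtV} and their proofs, and check that the total coefficient of $\log^\fl 2$ is zero. For instance, in case (ii) the two contributions should be proportional to $T^\fm(2,2)$ and $t^\fm(2,2)$-type terms with opposite signs, after using the relation between $\check1$ at the front and $T$-type tails.

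\textbf{Checking $D_r$ for $r\ge3$.} The left factors are already in $\calL$, so it suffices to show that the resulting right-hand combination is a motivic MZV. For $w-r\le2$ the right factor lives in weight $\le2$, where $\calH^2$ is spanned by $\zeta^\fm(2)$ and $(\log^\fm 2)^2$, together with $\log^\fm 2$ in weight one. So we must check that all $\log^\fm2$-contributions cancel, using Lemma~\ref{lem:D1S1bfl}\ref{lemCase:T(1)etc} and \ref{lemCase:DTDS}. For $w-r\ge3$, which arises only in case (iii) with $r=3$ and right weight $6$, the right factors are depth $\le2$ MMVs such as $t^\fm$-, $T^\fm$- or $\tz^\fm$-values with entries $\ge2$. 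These we recognize as unramified by Lemma~\ref{lem:D1S1bfl}\ref{lem:MtVUnram}, \ref{lemCase:T=tz,S=tz}. Wherever this fails, we instead show that the combination with the $T^\fl(3)$ coefficients collapses to such values.

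\textbf{Main obstacle.} We expect the bookkeeping in case (iii) to be the hardest part. This concerns the $D_3$ and $D_5$ cuts through the long blocks of zeros, where binomial coefficients from Lemma~\ref{lem:S_a(b+1,c+1)} must combine so that the potentially ramified right factors cancel. Examples of such factors are $S^\fm$- or $T^\fm$-double values of even weight, which are ramified by Lemma~\ref{lem:D1S1bfl}\ref{lemCase:DTDS}. Our first approach is to organize the cuts by the pair of endpoints $(a_p,a_{p+r+1})$, sum the coefficients in closed form via the Vandermonde and Beta-integral identities as in the proof of Lemma~\ref{lem:S_a(b+1,c+1)}, and verify that the coefficient of each ramified right factor vanishes. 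Should this become unwieldy, we would fall back on recomputing the relevant $D_r$ directly in the Euler-sum basis.
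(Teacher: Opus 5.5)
Your strategy is the paper's: apply Theorem~\ref{thm:Glanois}, expand $D_r$ cut by cut, kill cuts with a freely summed interior $\eta$ by Lemma~\ref{lem:D1S1bfl}\ref{Murakami-lemma9}, and evaluate left factors with Lemmas~\ref{lem:DBMMVs} and~\ref{lem:S_a(b+1,c+1)}. The gap is that for this lemma the proof \emph{is} the outcome of those computations, and you never carry them out. As written, your text would apply verbatim to ramified neighbours such as $M^\fm(\check{1},\check{1},5)$, whose $D_3$ contains the uncancelled term $-T_2^\fl(1)\ot T^\fm(1,3)$ (Theorem~\ref{thm:UnramTripleCaseIII}, Case (IV)).

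The concrete expectations you do state are also partly wrong. In $D_1$ only the two cuts flanking the block of consecutive $\eta$'s survive: $(0;\eta_1;\eta_2)$ and $(\eta_2;\eta_3;0)$ in (i), where the middle cut dies by Murakami's lemma, and $(0;\eta_1;\eta_2)$ and $(\eta_1;\eta_2;0)$ in (ii) and (iii). After rescaling they give $\pm\log^\fl 2\ot R$ with the \emph{same} $R$, namely $T^\fm(1,3)$, $\tz^\fm(2,2)$ and $\tz^\fm(4,4)$ respectively. A $T^\fm(2,2)$ versus $t^\fm(2,2)$ pair, as you predict for (ii), could not cancel. Double $T^\fm$-values with entries $\ge 2$ are not unramified in general: $T^\fm(2,2)$ is ramified by Lemma~\ref{lem:D1S1bfl}\ref{lemCase:DTDS}. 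That lemma also says nothing about weight-two right factors.

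The decisive point you miss is in (i). Split the cut $(\eta_2;\eta_3,0,0;1)$ by path composition. The part $I^\fl(\eta_2;\eta_3,0,0;0)$ contributes $-T_2^\fl(1)\ot T^\fm(1,1)$, and the part $I^\fl(0;\eta_3,0,0;1)$ contributes $T^\fl(3)\ot t^\fm(1,1)$. Hence $D_3M^\fm(\check{1},\check{1},3)=-2T^\fl(3)\ot\tz^\fm(2)+T^\fl(3)\ot\big(t^\fm(1,1)-T^\fm(1,1)\big)$. The bracket lies in $\calH_2$ because its $D_1$ vanishes by Lemma~\ref{lem:D1S1bfl}\ref{lemCase:D1MTV(k,1)},\ref{lemCase:D1MtV}; in fact it equals $-\zeta^\fm(2)$. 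This compensation exists only because that cut reaches the endpoint $1$, i.e.\ because $s_3=3$, and it is exactly what is missing for $M^\fm(\check{1},\check{1},5)$.

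For (ii) one finds $D_3=2T^\fl(3)\ot\tz^\fm(2)$. In (iii) your anticipated obstacle is misplaced: $D_3$ has a single surviving cut and equals $-t^\fl(3)\ot\tz^\fm(2,4)$, so no binomial bookkeeping is needed. The real cancellation is in $D_5$, whose right factors have weight $4$; your remark that $w-r\ge 3$ occurs only for $r=3$ overlooks this. There the cuts $(\eta_2;0,0,0,\eta_3,0;0)$ and $(0;0,\eta_3,0,0,0;1)$ produce $\mp\tz_1^\fl(4)\ot T^\fm(1,3)$ and cancel by path reversal, leaving $D_5=2T^\fl(5)\ot\tz^\fm(4)$. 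Finally $D_7=T^\fl(7)\ot\tz^\fm(2)$. Only once these explicit values are supplied does your outline become a proof, and it then coincides with the paper's.
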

\begin{proof}
(i) For $M^\fm(\check{1},\check{1},3)$, we have
\begin{align*}
D_1 &\, M^\fm(\check{1},\check{1},3)= \sum_{\eta_j=\pm 1} \eta_1\eta_3 D_1 I^\fm(0;\eta_1,\eta_2,\eta_3,0,0;1)\\
=&\, \sum_{\eta_j=\pm 1} \eta_1\eta_3   \Big[I^\fl(0;\eta_1;\eta_2)\ot I^\fm(0;\eta_2,\eta_3,0,0;1)
+ I^\fl(\eta_1;\eta_2;\eta_3)\ot I^\fm(0;\eta_1,\eta_3,0,0;1) \\
&\, \hskip5cm + I^\fl(\eta_2;\eta_3;0)\ot I^\fm(0;\eta_1,\eta_2,0,0;1) \Big]\\
=&\, \sum_{\eta_j=\pm 1} \eta_1\eta_2\eta_3   \Big[I^\fl(0;\eta_1;1)\ot I^\fm(0;\eta_2,\eta_3,0,0;1)
+ I^\fl(1;\eta_1;0)\ot I^\fm(0;\eta_3,\eta_2,0,0;1) \Big]=0.
\end{align*}
Here, the second term in the second line disappears by Lemma~\ref{lem:D1S1bfl}\ref{Murakami-lemma9}. Further,
\begin{align*}
D_3 &\,M^\fm(\check{1},\check{1},3)=  \sum_{\eta_j=\pm 1} \eta_1\eta_3 D_3 I^\fm(0;\eta_1,\eta_2,\eta_3,0,0;1)\\
=&\, \sum_{\eta_j=\pm 1} \eta_1\eta_3  \Big[I^\fl(\eta_1;\eta_2,\eta_3,0;0)\ot I^\fm(0;\eta_1,0;1)
+ I^\fl(\eta_2;\eta_3,0,0;1)\ot I^\fm(0;\eta_1,\eta_2;1) \Big]\\
=&\, \sum_{\eta_j=\pm 1}  \Big[ \eta_3 I^\fl(1;\eta_2,\eta_3,0;0)\ot I^\fm(0;\eta_1,0;1)
+ \eta_1\eta_3 I^\fl(\eta_2;\eta_3,0,0;0)\ot I^\fm(0;\eta_1,\eta_2;1) \\
&\,  \hskip5cm + \eta_1\eta_3 I^\fl(0;\eta_3,0,0;1)\ot I^\fm(0;\eta_1,\eta_2;1)  \Big]\\
=&\, \sum_{\eta_j=\pm 1}  \Big[ -t_1^\fl(1,1)\ot \tz^\fm (2)
+ \eta_3 I^\fl(1;\eta_3,0,0;0)\ot \eta_1\eta_2 I^\fm(0;\eta_1,\eta_2;1)
  + T^\fl(3)\ot t^\fm(1,1)  \Big]\\
=&\, \sum_{\eta_j=\pm 1}  \Big[ \Big(t^\fl(2,1)+t^\fl(1,2)\Big)\ot  \tz^\fm(2)
-T_2^\fl(1)\ot T^\fm(1,1)+ T^\fl(3)\ot t^\fm(1,1)   \Big]\\
=&\, - 2T^\fl(3)\ot \tz^\fm(2)+T^\fl(3)\ot\Big[t^\fm(1,1) -T^\fm(1,1)\Big]
\end{align*}
by Lemma~\ref{lem:DBMMVs}. Since $D_1 (t^\fm(1,1) -T^\fm(1,1))=0$ by Lemma~\ref{lem:D1S1bfl}\ref{lemCase:D1MTV(k,1)} and
\ref{lemCase:D1MtV} we see that $D_3M^\fm(\check{1},\check{1},3)\in \calL_3\ot \calH_2$ by Theorem~\ref{thm:Glanois}. Together
with the vanishing under $D_1$, this shows that $M^\fm(\check{1},\check{1},3)\in \calH_5$ is  unramified by
Theorem~\ref{thm:Glanois}.

(ii) For $M^\fm(\check{1},2,2)$, we can mimic case (i) to get
\begin{align*}
D_1M^\fm(\check{1},2,2)=&0,\\
D_3M^\fm(\check{1},2,2)=&\, 2 T^\fl(3)\ot \tz^\fm(2)\in \calL_3\ot \calH_2.
\end{align*}
This shows that $M^\fm(\check{1},2,2)\in \calH_5$ is unramified by Theorem~\ref{thm:Glanois}.

(iii) Similarly, for $M^\fm(\check{1},4,4)$, we have
\begin{align*}
D_1M^\fm(\check{1},4,4)=&\, 0,\\
D_3M^\fm(\check{1},4,4)=&\, -t^\fl(3)\ot \tz^\fm(2,4)\in\calL_3\ot\calH_6,\\
D_5M^\fm(\check{1},4,4)=&\,2 T^\fl(5)\ot \tz^\fm(4)\in\calL_5\ot\calH_4,\\
D_7M^\fm(\check{1},4,4)=&\, T^\fl(7)\ot \tz^\fm(2)\in\calL_7\ot\calH_2,
\end{align*}
and therefore $M^\fm(\check{1},4,4)$ is unramified by Theorem~\ref{thm:Glanois}.
\end{proof}

\begin{rem}
Using MZV data mine \cite{BlumleinBrVe2010}, we can quickly find that
\begin{align}
M(\check{1},\check{1},3)=&\,\frac{1}{2^5}\Big(217\zeta(5)-112 \zeta(2)\zeta(3) \Big)
= \frac{217}{2} \tz(5)-2 T(3)\zeta(2), \label{equ:analyticM113}\\
M(\check{1},2,2)=&\,\frac{1}{2^5}\Big(-93\zeta(5)+56\zeta(2)\zeta(3) \Big)=-\frac{93}{2}\tz(5)+T(3) \zeta(2) ,\notag\\
M(\check{1},4,4) =&\,\frac{1}{2^9}\Big(-\frac{10585}9\zeta(9)+508\zeta(7)\zeta(2)+248\zeta(5)\zeta(4)
    +56\zeta(3)\zeta(6)-\frac{112}{3}\zeta(3)\zeta(3,3)\Big) \notag \\
=&\,-\frac{10585}{18} \tz(9)+  T(7)\tz(2)+2 T(5)\tz(4)+2 T(3)\tz(6)-\frac{2}{3}T(3)\tz(3,3)  .\notag
\end{align}
All these relations can be lifted to the motivic version by checking that the images under all coactions $D_r$ (for odd $r$)
are the same on the two sides. For example, by duality relation
\begin{align*}
t^\reg(1,1) -T^\reg(1,1)=&\,\int_0^1 \om_{-1}(\om_1-\om_{-1}) \\
=&\,\int_0^1(\om_1-\om_{-1})\om_0=S(2)-T(2)=2\zeta(\ol{2})=-\zeta(2).
\end{align*}
This implies that $D_3 M^\fm(\check{1},\check{1},3)=-2 T^\fl(3)\ot \zeta^\fm(2)$. Since $D_3 M^\fm(\check{1},\check{1},3)=0$,
we now can lift \eqref{equ:analyticM113} to its motivic version by Theorem~\ref{thm:Glanois}.
\end{rem}

\subsection{Classification of unramified MMVs in depth three}\label{subsec:UnramDepth3}
We now summarize all the possible unramified MMVs in depth three in the following theorem.

\begin{thm}\label{thm:triUnramMMV}
Suppose $\bfs\in\N^3$ and $\bfgs\in\{\pm1\}$. Let $O$ (resp.~$E$) represent odd (resp.~even) positive integers.
Then the following list provides a complete classification
of unramified motivic triple mixed values $M^\fm(\bfs;\bfgs)$:
\begin{enumerate}
  \item[\upshape{(1)}] all $\tz(\bfs)$, i.e., $\bfgs=(1,1,1)$;
  \item[\upshape{(2)}] for $t^\fm(\bfs)$, i.e., $\bfgs=(-1,-1,-1)$:
    \begin{enumerate}
        \item [\upshape{(i)}]  $\bfs\in (\N_{\ge 2})^3$, or
        \item [\upshape{(ii)}] $\bfs=(E,1,\N_{\ge 2})$;
    \end{enumerate}

  \item[\upshape{(3)}] for $T^\fm(\bfs)$, i.e., $\bfgs=(-1,1,-1)$:
    \begin{enumerate}
 \item [\upshape{(i)}] $\bfs=(1,1,2), (1,1,3), (1,2,2)$, or
 \item [\upshape{(ii)}]  $\bfs=(O_{\ge 3},E,O_{\ge 3})$, or
 \item [\upshape{(iii)}] $\bfs=(E,1,E)$;
    \end{enumerate}

  \item[\upshape{(4)}] for $S^\fm(\bfs)$, i.e., $\bfgs=(1,-1,1)$:
  \begin{enumerate}
 \item [\upshape{(i)}]  $\bfs=(O_{\ge 3},O,E)$, or
 \item [\upshape{(ii)}] $\bfs=(O_{\ge 3},E,O_{\ge 3})$, or
 \item [\upshape{(iii)}]  $\bfs=(E,1,E)$;
\end{enumerate}

  \item[\upshape{(5)}] for other triple mixed values with irregular parity patterns:
    \begin{enumerate}
 \item [\upshape{(i)}] $(\bfs;\bfgs)=(1, 1, 3;-1, -1, 1)$, or
 \item [\upshape{(ii)}] $(\bfs;\bfgs)=(1, 2, 2;-1, 1, 1)$, or
 \item [\upshape{(iii)}] $(\bfs;\bfgs)=(1, 4, 4;-1, 1, 1)$.
    \end{enumerate}
\end{enumerate}
\end{thm}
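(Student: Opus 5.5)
The classification has two halves. The \emph{sufficiency} half shows that every listed value is unramified. The \emph{necessity} half shows that every other motivic triple mixed value is ramified. Both rest on Theorem~\ref{thm:Glanois}: $M^\fm(\bfs;\bfgs)$ is unramified if and only if $D_1$ kills it and, for every odd $r<w$, $D_r$ lands in $\calL_r\ot\calH_{w-r}$.

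\textbf{Sufficiency.}

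\begin{enumerate}[label=\upshape{(\alph*)},leftmargin=1cm]
\item \emph{Regular parity patterns.} Case (1) is trivial, since $\tz^\fm(\bfs)=2^{3-|\bfs|}\zeta^\fm(\bfs)$. Case (2)(i) is Lemma~\ref{lem:D1S1bfl}\ref{lem:MtVUnram}. Cases (2)(ii), (3) and (4) are the depth-three results for motivic $t$-, $T$- and $S$-values established in \cite{XuZhao2023Aug}, which I will quote.
\item \emph{Irregular patterns.} Case (5) is exactly Lemma~\ref{lem:3SpecialMMVs}.
\end{enumerate}

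If any of the regular-pattern cases needs rechecking, the method is as follows.
\begin{itemize}
\item Expand $M^\fm$ via \eqref{equ:defnMotivicMMV}.
\item Apply $D_r$ cut by cut.
\item Kill the cuts that sum over a free sign using Lemma~\ref{lem:D1S1bfl}\ref{Murakami-lemma9}.
\item Reduce the surviving left factors to multiples of $T^\fl(r)$ by Lemmas~\ref{lem:DBMMVs} and~\ref{lem:S_a(b+1,c+1)}.
\item Confirm that the right factors are depth $\le2$ values already known to be unramified, for example $t^\fm$ with components at least $2$, or $\tz^\fm$.
\end{itemize}

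\textbf{Necessity.} For each of the eight sign patterns $\bfgs\in\{\pm1\}^3$ and each $\bfs$ outside the list, I will exhibit an obstruction. Either $D_1M^\fm\neq0$, or some $D_r M^\fm$ has a right-hand tensor factor that is a ramified depth $\le2$ value, such as $\log^\fm2$, a $T^\fm(k,1)$, or a ramified double $S$- or $T$-value of even weight (Lemma~\ref{lem:D1S1bfl}\ref{lemCase:DTDS}).

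I will split by the positions where $s_j=1$ and by parities, and compute the following in general form:
\begin{itemize}
\item $D_1$, via Lemma~\ref{lem:D1S1bfl}\ref{lemCase:D1MSV}--\ref{lemCase:D1MtV} and its analogues for mixed patterns;
\item $D_{s_1+s_2-1}$, $D_{s_2+s_3-1}$ and $D_{w-1}$, which produce depth-one right factors;
\item $D_{w-1-k}$, which produces double values on the right.
\end{itemize}

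For the regular patterns $t$, $T$, $S$, this was done in \cite{XuZhao2023Aug} and I cite it. The genuinely new work is the four irregular patterns:
\[
(-1,-1,1),\quad (-1,1,1),\quad (1,-1,-1),\quad (1,1,-1),
\]
together with their reversals.

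For each irregular pattern, the coefficient of a ramified element such as $T^\fl(r)\ot\log^\fm2$, or $T^\fl(r)\ot(\text{ramified double})$, will come out as an explicit binomial expression in $s_1,s_2,s_3$. I must show that this coefficient is nonzero except at the three exceptional triples $(1,1,3)$, $(1,2,2)$ and $(1,4,4)$.

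To separate independent ramified pieces in $\calL_r\ot\calH^2_{w-r}$, I project onto $\calH^2/\calH$. For this I use that $\log^\fm2$, $T^\fm(k,1)$ and the even-weight double $S$/$T$ values are linearly independent modulo $\calH$; this follows from Theorem~\ref{thm:Glanois} applied in lower weight.

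\textbf{Main obstacle.} The hardest step is the irregular-pattern necessity. There the binomial coefficient conditions must be solved in general: I expect Diophantine-type equations such as $\binom{w-1}{s_1}=2\binom{w-1}{s_3}$ with a parity twist, and I must show that they force $\bfs$ into the three exceptions. I would first try to isolate a single cut with an obviously nonvanishing sign-pattern coefficient whenever $s_1\ge2$. The case $s_1=1$ would then be treated separately using the $D_1$ computation and small-weight direct checks.
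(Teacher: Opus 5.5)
Your sufficiency half, and your appeal to the earlier depth-three classification for the regular $t$-, $T$- and $S$-patterns, coincide with the paper. The gap is in the necessity argument for the four irregular patterns, at the step where you certify that a right-hand tensor factor is ramified. Your claim that $\log^\fm 2$, $T^\fm(k,1)$ and the even-weight double $S$/$T$-values are linearly independent modulo $\calH$ is false.

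Take $Y=T^\fm(a+1,c+1)$ or $Y=S^\fm(a+1,c+1)$ with $a,c\ge1$ and $k=a+c+2$ even.
\begin{itemize}
\item $D_1Y=0$ by Lemma~\ref{lem:D1S1bfl}\ref{lemCase:D1MTVadm},\ref{lemCase:D1MSV}.
\item For odd $3\le r<k-1$, every nonvanishing cut leaves a right factor of depth at most one and weight $\ge2$, so $D_rY\in\calL_r\ot\calH_{k-r}$.
\item $D_{k-1}Y$ lies in the line $\Q\,T^\fl(k-1)\ot\log^\fm 2$ by Lemma~\ref{lem:D1S1bfl}\ref{lemCase:DTDS}.
\end{itemize}
By Theorem~\ref{thm:Glanois}, all such $Y$ of a given weight are therefore proportional modulo $\calH$. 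Similarly, $D_1t^\fm(1,c+1)$ and $D_1S^\fm(1,c+1)$ are both multiples of $\log^\fl 2\ot T^\fm(c+1)$.

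This is exactly the situation the paper faces. The cut $D_{s_2}$ or $D_{s_2+1}$ produces $\tz^\fl(\cdot)\ot X$, where $X$ combines several ramified double values whose ramified parts are proportional. One must then apply a second coaction ($D_1X$, or $D_{a+c}X\in\Q\,\tz^\fl(a+c)\ot\log^\fm 2$) and prove that the resulting explicit coefficient does not vanish. Examples are $f(b,c)<0$ in Case~(III.2) and the impossibility of $a(2^{b+2}-1)=\binom{b}{c-1}-1$ in Case~(IV.3) of Theorem~\ref{thm:UnramTripleCaseI}. Projecting onto $\calH^2/\calH$ does not replace this cancellation analysis.

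Your treatment of $s_1=1$ also fails. For the pattern $(-1,1,1)$ with $s_2,s_3\ge2$, the only two nonzero $D_1$-cuts are $\pm\log^\fl 2\ot\tz^\fm(s_2,s_3)$, and they cancel. So $D_1M^\fm(\check{1},s_2,s_3)=0$ on an infinite set of indices, and ``small-weight direct checks'' cannot close the argument.

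Moreover, the exceptions are not isolated. $(\check{1},2,2)$ and $(\check{1},4,4)$ are the first two members of the family $(\check{1},2k,2k)$. On this family the generic cut $D_{s_2+1}$ of Case~(III.2) of Theorem~\ref{thm:UnramTripleCaseIV} has ramified part $X=\big((-1)^b\binom{b+1}{b}+b+1\big)T^\fm(1,b)=0$ (here $a=0$ and $b=c$ is odd). The paper needs a different cut, $D_{2k+3}$, whose right factor contains the ramified $T^\fm(1,2k-3)$, and this works only for $k\ge3$.

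Likewise, $(\check{1},\check{1},3)$ is the first member of the family $(\check{1},\check{1},2k+1)$, whose members with $k\ge2$ are shown ramified via $D_3$. Other subcases of the pattern $(-1,-1,1)$ need $D_{s_3}$. To complete the proof you must locate these degenerate families and the cuts that separate the three exceptional triples from the rest of their families; a single generic cut plus $D_1$ does not reach them.
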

\begin{proof}

Clearly, all values in (1) are unramified.  Turning to (2), we see that (i) is just Lemma~
\ref{lem:D1S1bfl}\ref{lem:MtVUnram}. For Part(ii), let $\bfs=(s_1,s_2,s_3)$. If $s_1=1$ or $s_3=1$, then $t^\fm(\bfs)$ is
ramified by Lemma~\ref{lem:D1S1bfl}\ref{lemCase:D1MtV}. Additionally, $t^\fm(O,1,s_3)$ is ramified by \cite[Thm.
9.13]{XuZhao2023Aug}. On the other hand, $t^\fm(E,1,\N_{\ge 2})$ is unramified by \cite[Theorem~9.8]{XuZhao2023Aug}

Next, \cite[Theorem~1.2 and Theorem~1.4]{XuZhao2023Aug} quickly imply (3) and (4),  respectively.
And finally, the three motivic MMVs in (5) are unramified by Lemma~\ref{lem:3SpecialMMVs}.

To show that all the motivic triple mixed values of with irregular parity patterns are ramified except for the three listed in
(5), we will have to analyze case by case in the next section. This will complete the proof of the theorem.
\end{proof}

\section{Ramified motivic triple mixed values}\label{sec:RamDepth3}
We prove in this section that a motivic triple mixed value with irregular parity pattern is ramified if it is not in the list of
Theorem~\ref{thm:triUnramMMV}(5) by consider the following four irregular parity patterns
\begin{center}
(I) $\bfgs=(1,-1,-1)$, (II) $\bfgs=(1,1,-1)$, (III) $\bfgs=(-1,1,1)$, and (IV) $\bfgs=(-1,-1,1)$
\end{center}
in Theorems~\ref{thm:UnramTripleCaseI}--\ref{thm:UnramTripleCaseIV}, respectively.

\begin{thm}\label{thm:UnramTripleCaseI}
All triple mixed values $M^\fm(\bfs;1,-1,-1)$ are ramified.
\end{thm}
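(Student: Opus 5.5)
The approach is to apply Theorem~\ref{thm:Glanois}: for each $\bfs=(s_1,s_2,s_3)$ we exhibit either $D_1M^\fm(\bfs;1,-1,-1)\neq0$ or some $D_r M^\fm(\bfs;1,-1,-1)\notin\calL_r\ot\calH_{w-r}$.

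Writing out \eqref{equ:defnMotivicMMV} for $\bfgs=(1,-1,-1)$ gives the coefficients $\eta_j^{(\eps_j-\eps_{j-1})/2}=\eta_2^{-1}=\eta_2$ and $1$ otherwise. So
\[
M^\fm(\bfs;1,-1,-1)=\sum_{\eta_j=\pm1}\eta_2\, I^\fm(0;\eta_1,0_{s_1-1},\eta_2,0_{s_2-1},\eta_3,0_{s_3-1};1).
\]

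\textbf{Step 1: $D_1$.} The length-one cuts are $I^\fl(a;b;c)$ with $b=\pm1$. Cuts around $\eta_1$ with $a=0$ give $\log^\fl 2$ terms, since summing over $\eta_1$ with trivial sign kills $I^\fl(0;\eta_1;\cdot)$ only when it appears with a factor $\eta_1$. Cuts around $\eta_2$ collapse by Lemma~\ref{lem:D1S1bfl}\ref{Murakami-lemma9} unless $s_1=1$ or $s_2=1$. This produces the following cases.
\begin{itemize}
\item If $s_1=1$: a term $\log^\fl 2\ot(\text{depth-two value in } \eta_2,\eta_3)$, essentially a multiple $t$- or $T$-value $\pm\log^\fl 2\ot t^\fm(s_2,s_3)$.
\item If $s_2=1$: a term $\log^\fl 2\ot M^\fm(s_1,s_3;\ldots)$.
\item If $s_3=1$: an extra term, as in Lemma~\ref{lem:D1S1bfl}\ref{lemCase:D1MtV}.
\end{itemize}
I would check that these never cancel. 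In each case the right factor is a nonzero element of weight $w-1$; when two such terms coexist they have distinct depth and parity, and nonvanishing is confirmed by applying a further $D_r$ or the period map. This disposes of every $\bfs$ with some $s_j=1$.

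\textbf{Step 2: all $s_j\ge2$.} Now $D_1=0$ and we must find an odd $r$ with a ramified left factor. The natural choice is $r=s_2+s_3-1$ (or $r=w-s_1$ when odd), i.e.\ the cut removing $(\eta_2,0_{s_2-1},\eta_3,0_{s_3-2})$ or a similar block. The cleanest approach is to examine the coefficient of $\cdots\ot\log^\fm 2$ or $\cdots\ot t^\fm(\text{single})$ in $D_r$. By Lemma~\ref{lem:D1S1bfl}, $t^\fm(1)=\log^\fm2\notin\calH_1$, and more generally any element whose $D_1$ is nonzero certifies ramification of the right factor. So I would take $r=w-1$ when $w$ is even, giving
\[
D_{w-1}M^\fm\in\calL_{w-1}\ot\calH_1^2 .
\]
Here every surviving term has the form $(\text{combination of } T^\fl(w-1))\ot\log^\fm2$, computed via Lemma~\ref{lem:S_a(b+1,c+1)} and Lemma~\ref{lem:DBMMVs}, in the style of Lemma~\ref{lem:D1S1bfl}\ref{lemCase:DTDS}. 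The coefficient is an explicit alternating sum of binomials in $s_1,s_2,s_3$; I would show it is nonzero by a parity or $2$-adic argument. For odd $w$, I would use $r=w-2$ with right factor in $\calH_2^2$, spanned by $\zeta^\fm(2)$ and $\log^{\fm}2^2$. I would project onto the $\log^\fm 2\cdot\log^\fm 2$ direction, or alternatively use $D_1$ of the right-hand factor, to detect ramification. Alternatively, take $r$ so the right factor is a depth-one $t^\fm(1)$-type ramified piece.

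\textbf{Main obstacle.} The main obstacle is Step 2. Showing that the binomial coefficient combination collecting the $\log^\fm2$-component is never zero for all $(s_1,s_2,s_3)\in\N_{\ge2}^3$ requires careful bookkeeping of the cuts starting or ending at $\eta_1$, $\eta_2$, $\eta_3$ and the boundary points $0,1$. I would first compute small weights numerically to guess a closed form, e.g.\ $\pm2\binom{w-2}{s_1-1}$ plus boundary corrections, and then prove it is nonzero, splitting into cases by the parities of $s_1,s_2,s_3$.
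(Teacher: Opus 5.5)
There is a genuine gap: your plan does not cover odd weight when all $s_j\ge2$.

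\textbf{What works.} Step~1 can be completed, and for indices with a unit entry it is a shorter route than the paper's (which uses $D_{w-1}$, $D_{s_2}$ or $D_{s_2+1}$ there). One does get $D_1M^\fm\neq0$ whenever some $s_j=1$. However, non-cancellation needs an actual computation; different parity patterns alone do not prevent it. For example, if $s_1=s_2=1<s_3$, the cut $I^\fl(\eta_1;\eta_2;\eta_3)$ carries the sign $\eta_2$, so Lemma~\ref{lem:D1S1bfl}\ref{Murakami-lemma9} does not kill it. In total one gets $D_1M^\fm=2\log^\fl2\ot\big(S^\fm(1,s_3)-t^\fm(1,s_3)\big)$. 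The right factor is nonzero only because $D_1\big(S^\fm(1,s_3)-t^\fm(1,s_3)\big)=-4\log^\fl2\ot t^\fm(s_3)\neq0$. For even $w$ with all $s_j\ge2$, your use of $D_{w-1}$ is exactly the paper's Case~(I). Lemma~\ref{lem:S_a(b+1,c+1)} gives the coefficient $2(-1)^{s_1-1}\binom{w-2}{s_1-1}$ of $T^\fl(w-1)\ot\log^\fm2$, as you guessed.

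\textbf{The gap: odd $w$ with all $s_j\ge2$} (e.g.\ $M^\fm(2,\check{2},\check{3})$). Here $D_1=0$, and $D_{w-2}$ carries no information. The quotient word keeps the first $p$ and last $2-p$ letters of $(\eta_1,0_{s_1-1},\eta_2,0_{s_2-1},\eta_3,0_{s_3-1})$. Since $s_1,s_3\ge2$, these are $(\eta_1,0)$, $(0,0)$, or $(\eta_3,0)$ when $s_3=2$. So every right factor is $0$ or $I^\fm(0;\pm1,0;1)\in\Q\zeta^\fm(2)=\calH_2$. Consequently there is no $(\log^\fm2)^2$-component, $D_1$ of the right factor vanishes, and $D_{w-2}M^\fm\in\calL_{w-2}\ot\calH_2$ automatically. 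Your fallbacks fail too: a weight-one right factor would need $r=w-1$, which is even.

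\textbf{What is missing} is a two-step argument. Cut out the middle block so that the right factor is a depth-two value of even weight. The paper takes $r=s_2$ if $s_2$ is odd and $r=s_2+1$ if $s_2$ is even. This gives $D_rM^\fm=\tz^\fl(r)\ot X$, where $X$ is a rational combination of $t^\fm(s_1,s_3)$ and $S^\fm(s_1,s_3)$ in the first case, and of $S^\fm(s_1,s_3-1)$, $S^\fm(s_1-1,s_3)$, $t^\fm(s_1-1,s_3)$ in the second. One then shows $X\notin\calH$ by a second coaction with values in $\calL\ot\Q\log^\fm2$, as in Lemma~\ref{lem:D1S1bfl}\ref{lemCase:DTDS}: even-weight double $S$-values are ramified, while $t^\fm(s_1,s_3)$ with $s_1,s_3\ge2$ is not. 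This must be combined with explicit binomial estimates showing the contributions cannot cancel.
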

\begin{proof}
Let $a'=a+1, b'=b+1, c'=c+1\in\N$ and let $w=a+b+c+3$ be the weight. Then
\begin{equation*}
M^\fm(a',b',c';1,-1,-1)=M^\fm(a',\check{b'},\check{c'})=\sum_{\eta_j=\pm 1} \eta_2
I^\fm(0;\eta_1,0_a,\eta_2,0_b,\eta_3,0_c;1).
\end{equation*}
Our proof strategy is to choose ramified coactions carefully depending on the parities of $a,b,c$ with the following
four different cases:
\begin{center}
(I) $2|w$, $c>0$;\quad
(II) $2|w$, $c=0$;\quad
(III) $2\nmid w$, $2|b$;\quad
(IV) $2\nmid w$, $2\nmid b$.
\end{center}

\medskip
\noindent
\underline{Case (I)}: $n:=w-1$ is odd and $c>0$. Then
\begin{align*}
D_n M^\fm(a',\check{b'},\check{c'})=&\, \sum_{\eta_j=\pm 1} \eta_2 I^\fl(\eta_1;0_a,\eta_2,0_b,\eta_3,0_c;1)\ot
I^\fm(0;\eta_1;1)\\
=&\, \sum_{\eta_2,\eta_3=\pm 1} \eta_2 \Big[I^\fl(-1;0_a,\eta_2,0_b,\eta_3,0_c;0)
+I^\fl(0;0_a,\eta_2,0_b,\eta_3,0_c;1)\Big]\ot I^\fm(0;-1;1) \\
=&\, \sum_{\eta_2,\eta_3=\pm 1} \eta_2 \Big[I^\fl(0;0_c,\eta_3,0_b,\eta_2,0_a;1)
+I^\fl(0;0_a,\eta_2,0_b,\eta_3,0_c;1)\Big]\ot I^\fm(0;-1;1) \\
=&\,-\Big[S^\fl_c(b',a')+t^\fl_a(b',c')\Big]\ot \log^\fm(2)\\
 =&\, \bigg[-(-1)^{b+c} \binom{n-1}{a}+(-1)^a\binom{n-1}{a} \bigg]T^\fl(n) \ot \log^\fm(2)\\
 =&\, 2(-1)^a \binom{n-1}{a} T^\fl(n) \ot \log^\fm(2) \ne 0
\end{align*}
by Lemma~\ref{lem:S_a(b+1,c+1)}. This implies that $M^\fm(a',\check{b'},\check{c'})$ must be ramified.

\medskip
\noindent
\underline{Case (II)}: $n=w-1$ is odd and $c=0$. Then
\begin{align*}
D_n M^\fm(a',\check{b'},\check{c'})=&\,\sum_{\eta_j=\pm 1} \eta_2 D_n I^\fm(0;\eta_1,0_a,\eta_2,0_b,\eta_3,0_c;1)\\
=&\, \sum_{\eta_j=\pm 1} \eta_2 \Big[
I^\fl(0;\eta_1,0_a,\eta_2,0_b;\eta_3)\ot I^\fm(0;\eta_3;1)
+ I^\fl(\eta_1;0_a,\eta_2,0_b,\eta_3;1)\ot I^\fm(0;\eta_1;1) \Big].
\end{align*}
Hence, besides the term as in Case (I) the additional term is
\begin{align*}
&\,\sum_{\eta_j=\pm 1} \eta_2  I^\fl(0;\eta_1,0_a,\eta_2,0_b;\eta_3)\ot I^\fm(0;\eta_3;1) \\
= &\, \sum_{\eta_j=\pm 1} \eta_2  I^\fl(0;\eta_1,0_a,\eta_2,0_b;1)\ot \eta_3 I^\fm(0;\eta_3;1) \\
= &\,S^\fl(a',b')\ot \log^\fm(2)=(-1)^a \binom{n-1}{b} T^\fl(n) \ot \log^\fm(2).
\end{align*}
Therefore,
\begin{align*}
D_n M^\fm(a',\check{b'},\check{c'})=&\,(-1)^{a}\bigg[2\binom{n-1}{a}+\binom{n-1}{b} \bigg] T^\fl(n) \ot \log^\fm(2)\ne 0.
\end{align*}
This implies that $M^\fm(a',\check{b'},\check{c'})$ is ramified again.

\medskip
\noindent
\underline{Case (III)}: $w$ is odd and $b$ is even. Then $a+c$ is even and
\begin{alignat*}{2}
&\,&& D_{b+1} M^\fm(a',\check{b'},\check{c'}) = \sum_{\eta_j=\pm 1} \eta_2 D_{b+1}
I^\fm(0;\eta_1,0_a,\eta_2,0_b,\eta_3,0_c;1)\\
=&\,\gd_{a=b} &&\sum_{\eta_j=\pm 1} \eta_2  I^\fl(0;\eta_1,0_a;\eta_2)\ot I^\fm(0;\eta_2,0_b,\eta_3,0_c;1) \\
+&\,\gd_{0<a\le b} &&\sum_{\eta_j=\pm 1} \eta_2  I^\fl(\eta_1;0_a,\eta_2,0_{b-a};0)\ot I^\fm(0;\eta_1,0_a,\eta_3,0_c;1) \\
+&\,\gd_{a=0} &&\sum_{\eta_j=\pm 1} \eta_2  I^\fl(\eta_1;\eta_2,0_b;\eta_3)\ot I^\fm(0;\eta_1,\eta_3,0_c;1) \\
+&\,\gd_{a>0} &&\sum_{\eta_j=\pm 1} \eta_2  I^\fl(0;\eta_2;0_b;\eta_3)\ot I^\fm(0;\eta_1,0_a,\eta_3,0_c;1) \\
+&\,\gd_{c>0} &&\sum_{\eta_j=\pm 1} \eta_2  I^\fl(\eta_2;0_b,\eta_3;0)\ot I^\fm(0;\eta_1,0_a,\eta_2,0_c;1)  \\
+&\,\gd_{c=0} &&\sum_{\eta_j=\pm 1} \eta_2  I^\fl(\eta_2;0_b,\eta_3;1)\ot I^\fm(0;\eta_1,0_a,\eta_2;1)  \\
+&\,\gd_{b\ge c>0} &&\sum_{\eta_j=\pm 1} \eta_2  I^\fl(0;0_{b-c},\eta_3,0_c;1)\ot I^\fm(0;\eta_1,0_a,\eta_2,0_c;1) \\
=&\,\gd_{a=b} && \tz^\fl(b+1)\ot t^\fm(b+1,c+1) \\
-&\,\gd_{0<a\le b} && T_{b-a}^\fl(a+1)\ot t^\fm(a+1,c+1) \\
+&\,\gd_{a=0} &&\Big[ T^\fl(b+1)\ot S^\fm(1,c+1)-T_{b}^\fl(1)\ot t^\fm(1,c+1) \Big] \\
+&\,\gd_{a>0} && T^\fl(b+1)\ot S^\fm(a+1,c+1)\\
-&\,\gd_{c>0} && \tz^\fl(b+1)\ot S^\fm(a+1,c+1) \\
+&\,\gd_{c=0} && \Big[\tz_b^\fl(1)\ot S^\fm(a+1,1) -\tz^\fl(b+1)\ot S^\fm(a+1,1) \Big] \\
+&\,\gd_{b\ge c>0} && \tz_{b-c}^\fl(c+1)\ot S^\fm(a+1,c+1)  \\
=&\,&& \tz^\fl(b+1)\ot X
\end{alignat*}
where
\begin{align*}
X=&\,\bigg(\gd_{a=b} - (-1)^a \binom{b+a}{b-a} (2^{b+1}-1)\bigg) t^\fm(a+1,c+1) \\
&\, \qquad\qquad + \Big(2^{b+1}-2+\gd_{b\ge c\ge 0}\binom{b}{c} \Big) S^\fm(a+1,c+1).
\end{align*}
We now show that $X$ is ramified by considering three different cases:
\begin{center}
    (III.1) $a,c>0$; \quad (III.2) $a=0$;\quad   (III.3) $a>0,$ $c=0.$
\end{center}

\medskip \noindent
(III.1) If $a,c>0$ then $t^\fm(a+1,c+1)$ is unramified by Lemma~\ref{lem:D1S1bfl}\ref{lem:MtVUnram}
and $S^\fm(a+1,c+1)$ is ramified by Theorem~\ref{thm:triUnramMMV}(4) (and it is  non-vanishing because of coefficient is clearly
positive).

\medskip \noindent
(III.2) If $a=0$ then $\gd_{a=b}=0$ since $b$ is odd while $c$ is even. Thus,
\begin{equation*}
X=-(2^{b+1}-1) t^\fm(1,c+1)+  \Big(2^{b+1}-2+\gd_{b\ge c\ge 0}\binom{b}{c}\Big) S^\fm(1,c+1)
\end{equation*}
is ramified since
\begin{align*}
D_1 X=\log^\fl 2\ot \Big[ -(2^{b+1}-1)(2-\gd_{c=0})   -2\Big(2^{b+1}-2+\gd_{b\ge c\ge 0}\binom{b}{c}\Big) \Big] T^\fm(c+1)
\end{align*}
by Lemma~\ref{lem:D1S1bfl}\ref{lemCase:D1MSV} is nonzero. Indeed, let $f(b,c)$ be the coefficient in front of $T^\fm(c+1)$.
Then
\begin{align*}
f(b,c)=\left\{
  \begin{array}{ll}
    -3(2^{b+1}-1)    \qquad & \hbox{if $c=0$;} \\
    6-2^{b+3}-2\gd_{b\ge c}\binom{b}{c} \qquad        & \hbox{if $c>0$.}
  \end{array}
\right.
\end{align*}
Hence, $f(b,c)<0$ always holds.

\medskip \noindent
(III.3) If $a>0$ and $c=0$ then again $\gd_{a=b}=\gd_{b\ge c>0}=0$ and therefore
\begin{equation*}
   X=- \binom{b+a}{b-a} (2^{b+1}-1) t^\fm(a+1,1)+  (2^{b+1}-1) S^\fm(a+1,1)
\end{equation*}
is ramified since by Lemma~\ref{lem:D1S1bfl}\ref{lemCase:D1MSV(k,1)}
\begin{equation*}
D_1 X= (2^{b+1}-1) \bigg[\binom{b+a}{b-a}(2^{a+1}-1)-1\bigg]  \log^\fl 2\ot  \tz^\fm(a+1)\ne 0.
\end{equation*}
From (III.1)--(III.3) we see that $X$ and therefore $D_{b+1} M^\fm(a',\check{b'},\check{c'})$ is always ramified.
Hence, $M^\fm(a',\check{b'},\check{c'})$ is ramified in Case (III).

\medskip
\noindent
\underline{Case (IV)}: both $w$ and $b$ are odd. Then $a+c$ is also odd.
We first handle a special case: $a=0$. Then we have $b,c>0$ since they are both odd. Thus,
\begin{align*}
&\, D_1 M^\fm(a',\check{b'},\check{c'}) = \sum_{\eta_j=\pm 1} \eta_2 D_1 I^\fm(0;\eta_1,\eta_2,0_b,\eta_3,0_c;1)\\
=&\, \sum_{\eta_j=\pm 1} \eta_2 \Big[  I^\fl(0;\eta_1;\eta_2)\ot I^\fm(0;\eta_2,0_b,\eta_3,0_c;1)+  I^\fl(\eta_1;\eta_2;0)\ot
I^\fm(0;\eta_1,0_b,\eta_3,0_c;1) \Big]  \\
=&\, \sum_{\eta_j=\pm 1}  \Big[ I^\fl(0;\eta_1;1)\ot \eta_2 I^\fm(0;\eta_2,0_b,\eta_3,0_c;1) -  \eta_2  I^\fl(0;\eta_2;1)\ot
\eta_1 I^\fm(0;\eta_1,0_b,\eta_3,0_c;1)\Big]  \\
=&\, -2 \log^\fl 2 \ot t^\fm(b+1,c+1)
\end{align*}
which is ramified.

Now we assume $a>0$. Then,
\begin{alignat*}{2}
D_{b+2} M^\fm&\,(a',\check{b'},\check{c'}) && = \sum_{\eta_j=\pm 1} \eta_2 D_{b+2}
I^\fm(0;\eta_1,0_a,\eta_2,0_b,\eta_3,0_c;1)\\
=&\,\gd_{a=b+1} &&\sum_{\eta_j=\pm 1} \eta_2  I^\fl(0;\eta_1,0_a;\eta_2)\ot I^\fm(0;\eta_2,0_b,\eta_3,0_c;1) \\
+&\,\gd_{1<a\le b+1} &&\sum_{\eta_j=\pm 1} \eta_2  I^\fl(\eta_1;0_a,\eta_2,0_{b+1-a};0)\ot
I^\fm(0;\eta_1,0_{a-1},\eta_3,0_c;1) \\
+&\,\gd_{a=1} &&\sum_{\eta_j=\pm 1} \eta_2  I^\fl(\eta_1;0,\eta_2,0_b;\eta_3)\ot I^\fm(0;\eta_1,\eta_3,0_c;1) \\
+&\,\gd_{a>1} &&\sum_{\eta_j=\pm 1} \eta_2  I^\fl(0;0,\eta_2;0_{b};\eta_3)\ot I^\fm(0;\eta_1,0_{a-1},\eta_3,0_c;1) \\
+&\,\gd_{c=1} &&\sum_{\eta_j=\pm 1} \eta_2  I^\fl(\eta_2;0_b,\eta_3,0;1)\ot I^\fm(0;\eta_1,0_a,\eta_2;1)  \\
+&\,\gd_{c>1} &&\sum_{\eta_j=\pm 1} \eta_2  I^\fl(\eta_2;0_b,\eta_3,0;0)\ot I^\fm(0;\eta_1,0_a,\eta_2,0_{c-1};1)  \\
+&\,\gd_{b+1\ge c>1} &&\sum_{\eta_j=\pm 1} \eta_2  I^\fl(0;0_{b+1-c}, \eta_3,0_c;1)\ot I^\fm(0;\eta_1,0_a,\eta_2,0_{c-1};1) \\
=&\,\gd_{a=b+1} && \tz^\fl(b+2)\ot t^\fm(b+1,c+1) \\
-&\,\gd_{1\le a\le b+1} && T_{b+1-a}^\fl(a+1)\ot t^\fm(a,c+1) \\
+&\,\gd_{a\ge 1} && T_1^\fl(b+1)\ot S^\fm(a,c+1)\\
+&\,\gd_{c=1} &&\tz_b^\fl(2)\ot S^\fm(a+1,c) \\
-&\,\gd_{c\ge 1} && \tz_1^\fl(b+1)\ot S^\fm(a+1,c) \\
+&\,\gd_{b+1\ge c>1} && \tz_{b+1-c}^\fl(c+1)\ot S^\fm(a+1,c)  \\
=&\,&& \tz^\fl(b+2)\ot X
\end{alignat*}
where
\begin{align*}
X=&\,
 \Big(\gd_{b+1\ge c>1} \binom{b+1}{c}+(\gd_{c=1}-1)(b+1)\Big) S^\fm(a+1,c)\\
&  +(b+1)(2^{b+2}-1) S^\fm(a,c+1)+ \bigg(\gd_{a=b+1} + (-1)^a \binom{b+1}{a} (2^{b+2}-1)\bigg) t^\fm(a,c+1) .
\end{align*}
Notice that if $a\ge 2,c\ge 1$ then $t^\fm(a,c+1)$ is unramified by Lemma~\ref{lem:D1S1bfl}\ref{lem:MtVUnram}
and both $S^\fm(a,c+1)$ and $S^\fm(a+1,c)$ are ramified by Lemma~\ref{lem:D1S1bfl}\ref{lemCase:DTDS} since $a$ and $c$ have
different parities. More precisely,
\begin{align*}
 D_{a+c} X
=&\, \bigg\{  \Big(\gd_{b+1\ge c>1} \binom{b+1}{c}+(\gd_{c=1}-1)(b+1)\Big) \bigg[\delta_{c=1}-(-1)^a
2\binom{a+c-1}{a}(2^{a+c}-1)\bigg] \\
&\,  +\gd_{a>0} (b+1)(2^{b+2}-1)\bigg[\delta_{c=0}+(-1)^a 2\binom{a+c-1}{a-1}(2^{a+c}-1)\bigg] \\
&\, +\gd_{a>0,c=0}\bigg(\gd_{a=b+1} + (-1)^a \binom{b+1}{a} (2^{b+2}-1)\bigg) (2^{a+c}-1)  \bigg\} \tz^\fl(a+c)\ot \log^\fm 2
.
\end{align*}
We now show the coefficient is nonzero by considering the following three cases:
\begin{center}
   (IV.1) $a>0,c=1$;\quad  (IV.2) $a>0$, $c=0$;    \quad   (IV.3) $a>0$, $c\ge 2.$
\end{center}

\medskip \noindent
(IV.1) If $a>0$ and $c=1$ then only the middle term appears and therefore $X$ is unramified.

\medskip
 \noindent
(IV.2) If $a>0$ and  $c=0$, then the first term does not appear, both $a$ and $b$ are odd so that $\gd_{a=b+1}=0$. Thus,
the coefficient is 0 if and only if
\begin{equation*}
    (b+1)(2^{b+2}-1)\bigg[1-2\binom{a+c-1}{a-1}(2^{a+c}-1)\bigg]
    -\binom{b+1}{a} (2^{b+2}-1)(2^{a+c}-1)=0,
\end{equation*}
which is impossible since the number on the left is clearly negative.

\medskip \noindent
(IV.3) Now we assume $a>0$ and $c\ge 2$. If $b+1\ge c$ then $D_{a+c} X=0$ if and only if
\begin{align*}
    (b+1)(2^{b+2}-1)(-1)^a 2\binom{a+c-1}{a-1}(2^{a+c}-1) =&\,
    \bigg(\binom{b+1}{c}-b-1\bigg) (-1)^a 2\binom{a+c-1}{a}(2^{a+c}-1),\\
    (b+1)(2^{b+2}-1) \binom{a+c-1}{a-1} =&\,  \bigg(\binom{b+1}{c}-b-1\bigg) \binom{a+c-1}{a},\\
    a(2^{b+2}-1) =&\,  \binom{b}{c-1}-1 .
\end{align*}
Noticing that $\binom{b}{c-1}\le 2^b$ we see the above equation has no solution for $a\ge 1.$

If $c\ge 2$ and $b+1< c$ then $D_{a+c} X=0$ if and only if
\begin{equation*}
    (b+1)(2^{b+2}-1)(-1)^a 2\binom{a+c-1}{a-1}(2^{a+c}-1) =
    -(b+1) (-1)^a 2\binom{a+c-1}{a}(2^{a+c}-1)
\end{equation*}
i.e.
\begin{equation*}
    (2^{b+2}-1) \binom{a+c-1}{a-1} = -  \binom{a+c-1}{a},
\end{equation*}
which is impossible again since the two sides have different signs.

In all the cases (IV.1)--(IV.3), we have seen that $D_{a+c} X$ is always ramified which implies that $D_{b+2}
M^\fm(a',\check{b'},\check{c'})$ is ramified and
so is $M^\fm(a',\check{b'},\check{c'})$ by Theorem~\ref{thm:Glanois}.

Combining Cases (I)--(IV) we have finished the proof of the theorem.
\end{proof}

\begin{thm}\label{thm:UnramTripleCaseII}
All triple mixed values $M^\fm(\bfs;1,1,-1)$ are ramified.
\end{thm}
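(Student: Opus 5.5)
We follow the proof of Theorem~\ref{thm:UnramTripleCaseI}. Write $a'=a+1$, $b'=b+1$, $c'=c+1$ and $w=a+b+c+3$. Since $\eps_0=1$, formula \eqref{equ:defnMotivicMMV} gives
\begin{equation*}
M^\fm(a',b',\check{c'})=\sum_{\eta_j=\pm1}\eta_3\, I^\fm(0;\eta_1,0_a,\eta_2,0_b,\eta_3,0_c;1).
\end{equation*}
The strategy is again to exhibit one odd $r$ for which $D_r$ is visibly outside $\calL_r\ot\calH_{w-r}$, and then apply Theorem~\ref{thm:Glanois}. We split into the same four cases as before:
\begin{center}
(I) $2|w$, $c>0$;\quad (II) $2|w$, $c=0$;\quad (III) $2\nmid w$, $2|b$;\quad (IV) $2\nmid w$, $2\nmid b$.
\end{center}

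\medskip
\noindent\emph{Even weight.} Take $n=w-1$ and look at $D_n$, whose right factor lies in $\calH_1$, the span of $\log^\fm 2$.
\begin{itemize}
\item Only the cuts that remove a single letter survive, i.e.\ the left end $\eta_1$ (leaving $I^\fm(0;\eta_1;1)$) and, when $c=0$, the right end $\eta_3$.
\item After the substitutions $t\mapsto 1-t$ and $t\mapsto -t$ (reversal and sign change) used in Case (I) above, the left-end cut becomes a combination of $T_c^\fl(b',a')$- or $S^\fl$-type and $t_a^\fl$-type depth-two $\fl$-values. For $c=0$ there is an additional $T^\fl(a',b')$-type term from the right-end cut.
\item Lemmas~\ref{lem:DBMMVs} and \ref{lem:S_a(b+1,c+1)} convert each of these into an explicit binomial multiple of $T^\fl(n)\ot\log^\fm 2$. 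It then remains to check that the binomial coefficient is nonzero: either the signs agree and the terms add, as in Case (I) above, or there is a positivity argument.
\end{itemize}

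\medskip
\noindent\emph{Odd weight.} Here we compute $D_{b+1}$ in case (III) and $D_{b+2}$ in case (IV). The cuts are enumerated exactly as in the previous proof; the only change is that the sign weight is $\eta_3$ instead of $\eta_2$.
\begin{itemize}
\item Collecting terms should give $\tz^\fl(b+1)\ot X$, resp.\ $\tz^\fl(b+2)\ot X$.
\item $X$ is a linear combination of depth-two values: $T^\fm(\cdot,\cdot)$ and $t^\fm(\cdot,\cdot)$ (or $S^\fm$ and $\tz^\fm$, depending on which letter is cut away), with coefficients built from $2^{k}-1$ (via Lemma~\ref{lem:D1S1bfl}\ref{lemCase:T=tz,S=tz}) and binomials (via Lemma~\ref{lem:S_a(b+1,c+1)}).
\item To show $X$ is ramified, treat boundary subcases ($a=0$, $c=0$, $c=1$) with $D_1$, using Lemma~\ref{lem:D1S1bfl}\ref{lemCase:D1MTV(k,1)}, \ref{lemCase:D1MSV} and \ref{lemCase:D1MtV}, which produce a nonzero multiple of $\log^\fl2\ot(\cdot)$.
\item In the generic subcase, apply $D_{\text{weight}(X)}$ and use Lemma~\ref{lem:D1S1bfl}\ref{lemCase:DTDS}. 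The $t^\fm$ part is unramified by Lemma~\ref{lem:D1S1bfl}\ref{lem:MtVUnram}, while the $T^\fm$/$S^\fm$ parts contribute multiples of $T^\fl\ot\log^\fm2$.
\end{itemize}
Where $X$ collapses to a single value, we instead quote Theorem~\ref{thm:triUnramMMV}(3)--(4) at depth two, or Lemma~\ref{lem:D1S1bfl}\ref{lemCase:DTDS} directly.

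\medskip
\noindent\emph{Where the difficulty lies.} We expect the main obstacle to be the odd-weight generic subcase. There the vanishing of the total coefficient reduces to a Diophantine identity of the shape
\begin{equation*}
\alpha(2^{b+1}-1)\binom{\cdot}{\cdot}=\binom{\cdot}{\cdot}\pm 1.
\end{equation*}
As in (IV.3) above, we will rule this out with the crude bound $\binom{b}{k}\le 2^b$ or a sign comparison, after splitting according to whether $b\ge c$.

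A further risk: if some exceptional small index, analogous to $(1,1,3)$ in pattern $(-1,-1,1)$, made all these coefficients vanish, we would have to compute a further $D_r$ explicitly. Since the statement asserts that pattern $(1,1,-1)$ has no exceptions, we expect the estimates to close in every case, with at most finitely many small $(a,b,c)$ checked by hand.
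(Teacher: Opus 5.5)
Your case split and your choice of coactions ($D_{w-1}$ in even weight, $D_{b+1}$ and $D_{b+2}$ in odd weight) are exactly the paper's. However, in Case (II) the computation breaks as you describe it.

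Because the sign weight sits only on $\eta_3$, rescaling the right-end cut by $\eta_3$ removes every sign from the $\fl$-factor:
\[
\sum_{\eta_j=\pm1}\eta_3\, I^\fl(0;\eta_1,0_a,\eta_2,0_b;\eta_3)\ot I^\fm(0;\eta_3;1)=\tz^\fl(a',b')\ot\log^\fm 2 .
\]
This is an MZV, not a $T^\fl$-value, and Lemmas~\ref{lem:DBMMVs} and~\ref{lem:S_a(b+1,c+1)} do not evaluate it. Treating it as $T^\fl(a',b')$ produces a false cancellation. For $M^\fm(2,1,\check{1})$ the left-end cut contributes $2T^\fl(3)\ot\log^\fm 2$ and $T^\fl(2,1)=-2T^\fl(3)$, so you would conclude $D_3M^\fm(2,1,\check 1)=0$.

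What you need is the depth-two parity theorem for MZVs. It gives $\tz^\fl(a',b')=\big((-1)^a\binom{n}{a+1}-1\big)\tz^\fl(n)$ with $n=w-1$. The left-end cut is $-\big[t_c^\fl(b',a')+S_a^\fl(b',c')\big]\ot\log^\fm 2$, not the combination you list. When $c=0$ it equals $2T^\fl(n)\ot\log^\fm2=(2^{n+1}-2)\tz^\fl(n)\ot\log^\fm 2$. The total is therefore $\big(2^{n+1}-3+(-1)^a\binom{n}{a+1}\big)\tz^\fl(n)\ot\log^\fm 2$. It is nonzero by the size bound $\binom{n}{a+1}\le 2^n-2$, not by sign agreement, since the binomial term is negative for odd $a$. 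For $M^\fm(2,1,\check1)$ it equals $10\,\tz^\fl(3)\ot\log^\fm 2$.

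The odd-weight outline has similar slips for this pattern.
\begin{itemize}
\item The right factors are $S^\fm$- and $\tz^\fm$-values, plus $t^\fm(c+1)$ when $a=0$ in Case (IV). So the unramified companion is a $\tz^\fm$-value, not a $t^\fm$-value.
\item $\tz^\fm(x,1)$ is not unramified: for $x\ge2$, $D_1\tz^\fm(x,1)=\tz^\fl(1)\ot\tz^\fm(x)\ne0$. This is why $c=0$ in (III) and $c=1$ in (IV) need their own $D_1$ computations.
\item In the generic subcase you must apply $D_{\mathrm{wt}(X)-1}$ to the even-weight $X$, as in Lemma~\ref{lem:D1S1bfl}\ref{lemCase:DTDS}, not $D_{\mathrm{wt}(X)}$.
\item In Case (III) with $b=0$, your $D_{b+1}$ is $D_1$. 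The identity $T^\fl(k)=(2^k-1)\tz^\fl(k)$ from Lemma~\ref{lem:D1S1bfl}(2) fails at $k=1$, where $T^\fm(1)=-\tz^\fm(1)$. That subcase must be computed directly; for $a,c>0$ one gets $D_1M^\fm(a',1,\check{c'})=2\,\tz^\fl(1)\ot S^\fm(a+1,c+1)\neq0$.
\end{itemize}
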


\begin{proof}
The proof uses the same ideas as in the proceeding proof and therefore will be only sketched below.
Let $a'=a+1, b'=b+1, c'=c+1\in\N$ and let $w=a+b+c+3$. Then
\begin{equation*}
M^\fm(a',b',c';1,1,-1)=M^\fm(a',b',\check{c'})=\sum_{\eta_j=\pm 1} \eta_3 I^\fm(0;\eta_1,0_a,\eta_2,0_b,\eta_3,0_c;1).
\end{equation*}
We consider the same four cases as in the previous proof:
\begin{center}
(I) $2|w$, $c>0$;\quad
(II) $2|w$, $c=0$;\quad
(III) $2\nmid w$, $2|b$;\quad
(IV) $2\nmid w$, $2\nmid b$.
\end{center}

\medskip
\noindent
\underline{Case (I)}: $n=w-1$ is odd and $c>0$. Then
\begin{align*}
D_n M^\fm(a',b',\check{c'})=&\, 2(-1)^c \binom{n-1}{c} T^\fl(n) \ot \log^\fm(2) \ne 0
\end{align*}
which is exactly the opposite to Case (I) with $a$ and $c$ switched in Theorem~\ref{thm:UnramTripleCaseI}. This implies that
$M^\fm(a',b',\check{c'})$ must be ramified.

\medskip
\noindent
\underline{Case (II)}: $n=w-1$ is odd and $c=0$. Then
\begin{align*}
D_n M^\fm(a',b',\check{c'})=&\,\bigg[2^{n+1}-3+(-1)^a\binom{n}{a+1}\bigg] \tz^\fl(n) \ot \log^\fm(2)\ne 0
\end{align*}
since $2^n\ge \binom{n}{0}+\binom{n}{a+1}+\binom{n}{n}$ for all $a+1<n$. This implies that $M^\fm(a',b',\check{c'})$ is
ramified again.

\medskip
\noindent
\underline{Case (III)}: $w$ is odd and $b$ is even. Then $a+c$ is even. Then
\begin{align*}
D_{b+1} M^\fm(a',b',\check{c'}) =& \tz^\fl(b+1)\ot  \bigg(\gd_{0<a<b} (-1)^{a+1}\binom{b}{a}+\gd_{a>0}+1-2^{b+1}\bigg)
S^\fm(a+1,c+1) \\
&\,  \qquad\qquad + \gd_{b\ge c}(-1)^c (2^{b+1}-1)\binom{b}{c}\tz^\fm(a+1,c+1).
\end{align*}
Then it can be shown that $D_{b+1} M^\fm(a',b',\check{c'})$ is ramified and so is $M^\fm(a',b',\check{c'})$  in this case.

\medskip
\noindent
\underline{Case (IV)}: both $w$ and $b$ are odd. Then $a+c$ is also odd. Thus,
\begin{align*}
D_{b+2} M^\fm(a',b',\check{c'}) =&\,\tz^\fl(b+2)\ot   \bigg(\gd_{a=b+1}-(-1)^a \gd_{1\leq a \leq b+1}\binom{b+1}{a} -\gd_{a\ge
1} (b+1)\bigg)S^\fm(a,c+1)\\
&\,  \qquad\qquad + \gd_{a=0}\bigg(2^{b+2}+b \bigg) t^\fm(c+1)
+\gd_{c\ge 1} (2^{b+2}-1)(b+1) S^\fm (a+1,c)\\
&\, \qquad\qquad +\gd_{b+1\ge c\ge 1} (-1)^c (2^{b+2}-1)\binom{b+1}{c}\tz^\fm(a+1,c).
\end{align*}
By considering the following five cases
\begin{center}
    (i) $a=c=0$, \quad(ii) $a=0,\ c=1$, \quad(iii) $a=0,\ c\ge 2$, \quad(iv) $a>0, \ c=0 $, \quad(v) $a>0, \ c\ge 1$,
\end{center}
we can show that $D_{b+2} M^\fm(a',b',\check{c'})$ is always ramified and so is $M^\fm(a',b',\check{c'})$.
This finishes the proof of the theorem.
\end{proof}

\begin{thm}\label{thm:UnramTripleCaseIII}
The motivic triple mixed value $M^\fm(\bfs;-1, -1, 1)$ is unramified only when $\bfs=(1, 1, 3)$.
\end{thm}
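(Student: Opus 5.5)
Write $a'=a+1$, $b'=b+1$, $c'=c+1$ and $w=a+b+c+3$. Then
\begin{equation*}
M^\fm(a',b',c';-1,-1,1)=M^\fm(\check{a'},\check{b'},c')=\sum_{\eta_j=\pm1}\eta_1\eta_3\, I^\fm(0;\eta_1,0_a,\eta_2,0_b,\eta_3,0_c;1).
\end{equation*}
The sign $\eta_1\eta_3$ follows from \eqref{equ:defnMotivicMMV}: the exponents are $(\eps_j-\eps_{j-1})/2=-1,0,1$. Unramifiedness of $(1,1,3)$ is already Lemma~\ref{lem:3SpecialMMVs}(i), so it remains to show ramification in every other case. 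I will follow the scheme of Theorems~\ref{thm:UnramTripleCaseI} and \ref{thm:UnramTripleCaseII}, using the same four cases: (I) $2|w$, $c>0$; (II) $2|w$, $c=0$; (III) $2\nmid w$, $2|b$; (IV) $2\nmid w$, $2\nmid b$. Before that, I will dispose of $a=0$ with $D_1$. When $a=0$, the cut $I^\fl(0;\eta_1;\eta_2)$ together with the cut $I^\fl(\eta_1;\eta_2;\cdot)$ (and, when $b=0$, the cut $I^\fl(\eta_1;\eta_2;\eta_3)$, which vanishes by Lemma~\ref{lem:D1S1bfl}\ref{Murakami-lemma9}) should give $D_1M^\fm=\log^\fl2\ot(\text{combination of depth-two }T^\fm,\,S^\fm,\,t^\fm)$. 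This is nonzero unless the two contributions cancel, which is exactly what happened in Lemma~\ref{lem:3SpecialMMVs}(i) for $b=0$.

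In Case (I) I take $n=w-1$ and keep only the cut removing $\eta_1$ (and, if $a=0$, the cut ending at $\eta_1$). Reversing the path as in Case (I) of Theorem~\ref{thm:UnramTripleCaseI} gives
\begin{equation*}
D_nM^\fm=\pm\big[X^\fl_c(b',a')\pm Y^\fl_a(b',c')\big]\ot\log^\fm2,
\end{equation*}
where $X,Y$ are depth-two $T$- or $S$-values (the sign $\eta_3$ turns the parity pattern into $T$/$S$ rather than $t$). Lemma~\ref{lem:S_a(b+1,c+1)} turns the bracket into an explicit binomial multiple of $T^\fl(n)$, which I expect to be a sum of two same-sign binomials and hence nonzero. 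Case (II) ($c=0$) adds the cut $I^\fl(0;\eta_1,0_a,\eta_2,0_b;\eta_3)\ot I^\fm(0;\eta_3;1)$. This cut produces $t^\fl(a',b')\ot\log^\fm2=-T^\fl(n)\ot\log^\fm2$ by Lemma~\ref{lem:DBMMVs}, and I will check that it cannot cancel the main term.

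For odd weight, Case (III) uses $D_{b+1}$ and Case (IV) uses $D_{b+2}$. The plan is to list the cuts of length $b+1$ (resp.\ $b+2$) containing $0_b$ next to $\eta_2$, exactly as in the two tables above, and to evaluate the left factors via Lemmas~\ref{lem:D1S1bfl}\ref{lemCase:T=tz,S=tz} and \ref{lem:S_a(b+1,c+1)}. This writes the result as $\tz^\fl(b+1)\ot X$ (resp.\ $\tz^\fl(b+2)\ot X$), where $X$ is an explicit combination of depth-two values $T^\fm(\cdot,\cdot)$, $S^\fm(\cdot,\cdot)$, $t^\fm(\cdot,\cdot)$, and $\tz^\fm(\cdot,\cdot)$. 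I then show $X$ is ramified by splitting into subcases according to whether $a$ and $c$ are $0$, $1$, or larger. If a last component equals $1$, I apply $D_1$ to $X$ using Lemma~\ref{lem:D1S1bfl}\ref{lemCase:D1MTV(k,1)}--\ref{lemCase:D1MtV}. Otherwise I apply $D_{\text{wt}(X)-1}$ or $D_{\text{wt}(X)}$ using Lemma~\ref{lem:D1S1bfl}\ref{lemCase:DTDS}; here the $t$- and $\tz$-terms with components at least $2$ drop out, so only the $T$/$S$ coefficients matter.

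The main obstacle is this last step: showing the resulting integer coefficients (differences of binomials times factors $2^{k}-1$) never vanish outside $(1,1,3)$. I will try sign arguments first and size bounds of the type $\binom{b}{c-1}\le2^b$ when signs agree. The exceptional small cases $a=0$ or $b=0$ with small weights, where cancellation genuinely occurs (as for $(1,1,3)$), may need to be checked directly, possibly with an alternative $D_r$.
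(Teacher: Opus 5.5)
Your even-weight Cases (I) and (II), the $D_1$ shortcut for $a=0$, and $D_{b+2}$ for odd weight with $b$ odd all follow the paper's route. The shortcut works: for $a=0<b$ the two $D_1$ cuts you name reinforce rather than cancel, each giving $\log^\fl 2\ot T^\fm(b+1,c+1)$. When $c=0$ the cut $(0;\eta_3;1)$ adds $\log^\fl 2\ot t^\fm(1,b+1)$, so every case with $a=0<b$ is disposed of at once.

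The gap is Case (III), odd weight with $b$ even. You apply $D_{b+1}$ throughout and expect its coefficients to vanish only at $(1,1,3)$ and a few small cases. In fact $D_{b+1}$ degenerates on infinite families.

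\begin{itemize}
\item If $0<a=b<c$, the only nonzero cuts of length $b+1$ are $(0;\eta_1,0_a;\eta_2)$, $(0;\eta_2,0_b;\eta_3)$, $(\eta_1;0_a,\eta_2;0)$ and $(\eta_2;0_b,\eta_3;0)$. They give $T^\fl(b+1)$, $\tz^\fl(b+1)$, $-\tz^\fl(b+1)$ and $-T^\fl(b+1)$, each tensored with the same $T^\fm(a+1,c+1)$. Hence $D_{b+1}M^\fm(\check{a'},\check{b'},c')=0$ identically (e.g.\ for $M^\fm(\check 3,\check 3,5)$), and there is no $X$ to prove ramified.
\item If $a=b=0$ and $c\ge 4$ is even, both your preliminary $D_1$ and $D_{b+1}=D_1$ vanish. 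The cuts $(0;\eta_1;\eta_2)$ and $(\eta_2;\eta_3;0)$ contribute $\pm\log^\fl 2\ot T^\fm(1,c+1)$ and cancel. So every $M^\fm(\check 1,\check 1,c')$ with odd $c'\ge 5$ escapes your argument, and checking cases directly cannot handle an infinite family.
\end{itemize}

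What is missing is a change of coaction on these degenerate families, which is what the paper's Cases (IV) and (V) do.

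\begin{itemize}
\item For $a=b=0$, $2|c\ge 4$, the paper uses $D_3$. Only the cuts with left endpoints $\eta_1$ and $\eta_2$ survive, and $D_3M^\fm(\check 1,\check 1,c')=-t_1^\fl(1,1)\ot\tz^\fm(c)-T_2^\fl(1)\ot T^\fm(1,c-1)$. This is ramified because $T^\fm(1,c-1)$ is a double $T$-value of even weight (Lemma~\ref{lem:D1S1bfl}\ref{lemCase:DTDS}).
\item When $c=2$, the second cut ends at $1$ and adds $T^\fl(3)\ot t^\fm(1,1)$. This leaves only the unramified combination $t^\fm(1,1)-T^\fm(1,1)$, which is exactly why $(1,1,3)$ survives.
\item For $0<a=b<c$, the paper uses $D_{c+1}$. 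Its right-hand factor contains the even-weight value $T^\fm(a+1,b+1)$ alongside only unramified terms ($t^\fm(a+1,b+1)$ and depth-one values). Applying $D_{a+b+1}$ to that factor shows ramification.
\end{itemize}

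A minor point on Case (II): the extra cut equals $-t^\fl(a',b')\ot\log^\fm 2=T^\fl(n)\ot\log^\fm 2$, not $-T^\fl(n)\ot\log^\fm 2$. This is because $\sum_{\eta_3}I^\fm(0;\eta_3;1)=-\log^\fm 2$ and no $\eta_3$ remains in the sign. The total $-T^\fl(n)\ot\log^\fm 2$ is still nonzero, but you need a size comparison rather than a sign argument.
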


\begin{proof}
Let $a'=a+1, b'=b+1, c'=c+1\in\N$ and let $w=a+b+c+3$. Then
\begin{equation*}
M^\fm(a',b',c';-1,-1,1)=M^\fm(\check{a'},\check{b'},c')=\sum_{\eta_j=\pm 1} \eta_1\eta_3
I^\fm(0;\eta_1,0_a,\eta_2,0_b,\eta_3,0_c;1).
\end{equation*}
We now consider the following cases:
\begin{center}
(I) $2|w$, $c>0$;\quad
(II) $2|w$, $c=0$;\quad
(III) $2\nmid w$, $2\nmid b$;\quad
(IV) $2\nmid w$, $a=b=0$, $2|c\ge 4$; \\
(V) $2\nmid w$, $2|b$, and either (1) $a=b=c=0$, or (2) $0<a=b<c$, or (3) $a=0, b\ge \max\{c,1\}$;
(VI) $2\nmid w$, $2|b$,  but not in Cases (IV) or (V).
\end{center}

\medskip
\noindent
\underline{Case (I)}: $n=w-1$ is odd and $c>0$. Then
\begin{align*}
D_n M^\fm(\check{a'},\check{b'},c')=&\, -2(-1)^c \binom{n-1}{c} T^\fl(n) \ot \log^\fm(2) \ne 0.
\end{align*}

\medskip
\noindent
\underline{Case (II)}: $n=w-1$ is odd and $c=0$. Then
\begin{align*}
D_n M^\fm(\check{a'},\check{b'},c')=&\,-\bigg[2(-1)^{c}\binom{n-1}{c}+1 \bigg] T^\fl(n) \ot \log^\fm(2)\ne 0.
\end{align*}

\medskip
\noindent
\underline{Case (III)}: both $w$ and $b$ are odd.

\medskip
\noindent
(III.1) $a=0$. Then $c$ is odd and
\begin{align*}
 D_1 M^\fm(1,\check{b'},\check{c'})=&\,\log^\fl 2 \ot\Big[  T^\fm(b+1,c+1)+t^\fm(b+1,c+1) \Big].
\end{align*}
Since $t^\fm(b+1,c+1)$ is unramified by Lemma~\ref{lem:D1S1bfl}\ref{lem:MtVUnram} and $T^\fm(b+1,c+1)$ is ramified by
Lemma~\ref{lem:D1S1bfl}\ref{lemCase:DTDS}, we see that $M^\fm(1,\check{b'},\check{c'})$ is ramified.

\medskip
\noindent
(III.2) $a>0$.  Then $a+c$ is odd and
\begin{align*}
D_{b+2} M^\fm(\check{a'},\check{b'},c')
= &\, \tz^\fl(b+2)\ot   \bigg[\gd_{c\ge 1} (b+1)(2^{b+2}-1) T^\fm(a+1,c) \\
&\, +\bigg(\gd_{a=b+1}(2^{b+2}-1) -\gd_{a\le b+1}  (-1)^a \binom{b+1}{a} -b-1 \bigg) T^\fm(a,c+1)\\
&\, +  (2^{b+2}-1)\Big(\gd_{b+1\ge c>0}(-1)^c\binom{b+1}{c}-\gd_{c=1}(b+1) \Big) t^\fm(a+1,c)
 \bigg].
\end{align*}
By considering three cases
\begin{center}
    (III.2.i) $c=0$,\quad (III.2.ii) $c=1$,\quad (III.2.iii) $c\ge 2$
\end{center}
we can show that $D_{b+2} M^\fm(\check{a'},\check{b'},c')$ is always ramified and so is
$M^\fm(\check{a'},\check{b'},c')$ in Case (III).

\bigskip
The following Cases (IV)--(VI) handle the case when $w$ is odd and $b$ is even.

\medskip
\noindent
\underline{Case (IV)}: $w$ is odd, $a=b=0$, and $c\ge 4$ is even. Then
\begin{align*}
 D_3 M^\fm(\check{1},\check{1},c')
=&\,- t_1^\fl(1,1)\ot \tz^\fm(c)-T_2^\fl(1)\ot T^\fm(1,c-1)
\end{align*}
which is ramified by Lemma~\ref{lem:D1S1bfl}\ref{lemCase:DTDS}.

\medskip
\noindent
\underline{Case (V)}: $w$ is odd, $b$ is even, and either (1) $a=b=c=0$, or (2) $0<a=b<c$, or (3) $a=0,b\ge
\max\{c,1\}$. Then
\begin{align*}
D_{c+1} M^\fm(\check{a'},\check{b'},c')
=&\, \tz^\fl(c+1)\ot\bigg[\bigg(\gd_{a+b\ge c\ge b}\binom{c}{b}-\gd_{a< c\le a+b}\binom{c}{a}\bigg)  T^\fm(a+b-c+1,c+1)  \\
+&\,\gd_{c-a>b} \binom{c}{b}(2^{c+1}-1) T^\fm(a+b+2)\\
+&\, \gd_{c\ge b} \binom{c}{b}(2^{c+1}-1) T^\fm(a+1,b+1)
+ (2^{c+1}-1)  t^\fm(a+1,b+1) \bigg].
\end{align*}
By consider the coaction $D_{a+b+1}$ on the right factor we can show that it is always ramified.

\medskip
\noindent
\underline{Case (VI)}: $w$ is odd, $b$ is even, but not in Cases (IV) or (V). Then
\begin{alignat*}{2}
&\,D_{b+1} M^\fm(a',b',\check{c'})
=&\, C\, \tz^\fl(b+1)\ot T^\fm (a+1,c+1)-\gd_{b\ge c\ge 0} \binom{b}{c} T^\fl(b+1)\ot t^\fm(a+1,c+1)
\end{alignat*}
where $T^\fl(b+1)\ot t^\fm(a+1,c+1) \in\calL_{b+1}\ot \calH_{a+c+2}$ and the number
\begin{align*}
C=&\, \gd_{a=b} (2^{b+1}-1)- (-1)^a\binom{b}{a}
+ 2 - 2^{b+1}+(-1)^a\gd_{b\ge c}\binom{b}{a} (2^{b+1}-1).
\end{align*}
By considering the following cases not covered by Cases (IV) or (V):
\begin{center}
    (1) $2\nmid a$;\quad (2) $2|a$, $0<a\ne b\ge c$;  \quad
    (3) $2|a$, $0<a\ne b< c$;\quad (4) $a=b\ge c$, (5) $a=0<b<c$,
\end{center}
we can show that $C\ne 0$ and therefore by Lemma~\ref{lem:D1S1bfl}\ref{lemCase:DTDS}
$D_{b+1} M^\fm(a',b',\check{c'})$ is always ramified in Case (VI).

\bigskip
Combining Cases (I)--(VI) we have finished the proof of the theorem.
\end{proof}

\begin{thm}\label{thm:UnramTripleCaseIV}
The motivic triple mixed values $M^\fm(\bfs;-1, 1, 1)$ are unramified only when $\bfs=(1, 2, 2)$ or $\bfs=(1, 4, 4)$.
\end{thm}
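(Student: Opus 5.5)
The unramified direction is already settled: Lemma~\ref{lem:3SpecialMMVs}(ii),(iii) shows that $M^\fm(\check{1},2,2)$ and $M^\fm(\check{1},4,4)$ are unramified. It remains to show that every other $M^\fm(\check{a'},b',c')$ is ramified. Write $a'=a+1$, $b'=b+1$, $c'=c+1$ and $w=a+b+c+3$, so that
\begin{equation*}
M^\fm(\check{a'},b',c')=\sum_{\eta_j=\pm 1}\eta_1 I^\fm(0;\eta_1,0_a,\eta_2,0_b,\eta_3,0_c;1).
\end{equation*}
I would use the same case split as in Theorems~\ref{thm:UnramTripleCaseI}--\ref{thm:UnramTripleCaseIII}: (I) $2|w$, $c>0$; (II) $2|w$, $c=0$; (III) $2\nmid w$, $2\nmid b$; and then several sub-cases of $2\nmid w$, $2|b$. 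In each case I would exhibit one coaction $D_r$ whose image is not in $\calL_r\ot\calH_{w-r}$, and conclude ramification by Theorem~\ref{thm:Glanois}.

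\textbf{Even weight.} Take $n=w-1$ and compute $D_n$. Only the cuts removing everything except one letter $\eta_1$ or $\eta_3$ survive, so the right factor is $\log^\fm 2$. After reversal and the change $t\mapsto 1-t$, the left factors become $S^\fl_c$, $T^\fl_a$ or $t^\fl_a$ of depth two. Lemma~\ref{lem:S_a(b+1,c+1)} turns these into explicit binomial multiples of $T^\fl(n)$. In Case (II) an extra cut ending at $\eta_3$ contributes a depth-two value through Lemma~\ref{lem:DBMMVs}. I expect the resulting coefficient to be a signed combination such as $2\binom{n-1}{c}\pm\binom{n-1}{a}$ or $2^{n+1}-3\pm\binom{n}{a+1}$. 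Non-vanishing would then follow from parity or size, for instance the estimate $2^n>\binom{n}{0}+\binom{n}{k}+\binom{n}{n}$ used in Theorem~\ref{thm:UnramTripleCaseII}.

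\textbf{Odd weight, $b$ odd.} If $a=0$, compute $D_1$. The leading $\eta_1$ cuts with $\eta_2$, giving $\log^\fl 2\ot(\text{combination of } t^\fm(b',c'),\ T^\fm(b',c'),\ S^\fm(b',c'))$. Since $b+c$ is odd here, Lemma~\ref{lem:D1S1bfl}\ref{lemCase:DTDS} and \ref{lem:MtVUnram} decide whether this is ramified. If $a>0$, compute $D_{b+2}$ as in Case (IV) of Theorem~\ref{thm:UnramTripleCaseI}. This gives $\tz^\fl(b+2)\ot X$, where $X$ is a combination of $S^\fm$, $T^\fm$, $t^\fm$ and $\tz^\fm$ in depth two and weight $a+c+1$, which is even. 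Ramification of $X$ is then tested with $D_{a+c}$ or $D_1$, using Lemma~\ref{lem:D1S1bfl}\ref{lemCase:D1MTV(k,1)}--\ref{lemCase:DTDS}. This reduces to a handful of binomial inequalities, treated separately for $c=0$, $c=1$ and $c\ge2$.

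\textbf{Odd weight, $b$ even (the main obstacle).} This case contains the exceptions $(a,b,c)=(0,1,1)$ and $(0,3,3)$, so the generic coefficient must vanish precisely there. I would compute $D_{b+1}$, giving $C\,\tz^\fl(b+1)\ot T^\fm(a',c')$ or $S^\fm(a',c')$ plus terms already known to be unramified, such as $\tz^\fm$. Whenever $C\ne0$, the double value of even weight $a+c+2$ is ramified by Lemma~\ref{lem:D1S1bfl}\ref{lemCase:DTDS}, so the whole expression is ramified.

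The delicate part is the locus $C=0$, which should be a family like $a=0$, $b=c$, together with small $a=b$ or $b=0$ configurations, analogous to Cases (IV)--(V) of Theorem~\ref{thm:UnramTripleCaseIII}. On that locus I would switch to $D_{c+1}$ or $D_3$. The aim is to obtain a right factor combining $T^\fm(\cdot,\cdot)$, $t^\fm(\cdot,\cdot)$ and $T^\fm(\text{single})$, then apply $D_{a+b+1}$ or $D_1$ to it and show the resulting coefficient is nonzero except when $(b,c)\in\{(1,1),(3,3)\}$. This last step is a Diophantine-type identity in binomials and powers of $2$. I would first attempt it by direct size comparison, checking small $b,c$ by hand; the computation in Lemma~\ref{lem:3SpecialMMVs} shows the expected vanishing at the two exceptional values.
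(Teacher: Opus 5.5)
There is a genuine gap, and in the end it cannot be closed.

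\textbf{The setup and your case split.} For the signature $(-1,1,1)$ the factor $\prod_j\eta_j^{(\eps_j-\eps_{j-1})/2}$ is $\eta_1\eta_2$. So $M^\fm(\check{a'},b',c')=\sum_{\eta_j=\pm1}\eta_1\eta_2\,I^\fm(0;\eta_1,0_a,\eta_2,0_b,\eta_3,0_c;1)$. The sum you wrote, weighted by $\eta_1$ alone, is $t^\fm(a',b',c')$.

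Next, the exceptions $(a,b,c)=(0,1,1),(0,3,3)$ have $b$ \emph{odd}. They sit in your subcase ``$w$ odd, $b$ odd, $a=0$'' (where $c$ is odd too, so $b+c$ is even), not in the $b$ even case. In that subcase your $D_1$ test sees nothing. The only nonzero length-one cuts are $(0;\eta_1;\eta_2)$ and $(\eta_1;\eta_2;0)$. After $\eta_1\to\eta_1\eta_2$, resp.\ $\eta_2\to\eta_1\eta_2$, they give $\log^\fl 2\ot\tz^\fm(b',c')$ and $-\log^\fl 2\ot\tz^\fm(b',c')$. Hence $D_1M^\fm(\check{1},b',c')=0$ identically, as in Lemma~\ref{lem:3SpecialMMVs}.

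Conversely, the $b$ even case has nothing to isolate. There $D_{b+1}$ consists of terms with right factor $\tz^\fm(a',c')$ plus $\big(2^{b+1}-2+\delta_{b\ge c}(-1)^c\binom{b}{c}\big)\tz^\fl(b+1)\ot T^\fm(a',c')$. This coefficient is positive for $b\ge2$. For $b=0$ one must use $T^\fl(1)=\log^\fl 2=-\tz^\fl(1)$ instead, and $D_1$ is again nonzero.

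\textbf{The diagonal $a=0$, $b=c$ odd.} This is the real difficulty, and the step you plan there does not exist: no coefficient is nonzero except at $(b,c)\in\{(1,1),(3,3)\}$.
\begin{itemize}
\item For odd $r$ with $b+1\le r\le 2b$, put $j=r-b-1$. The cuts $(\eta_2;0_b,\eta_3,0_j;0)$ and $(0;0_j,\eta_3,0_b;1)$ have the same right factor $T^\fm(1,b-j+1)$. Their left factors are $-\tz^\fl_j(b+1)$ and $\tz^\fl_j(b+1)$ (homothety plus reversal), so they cancel.
\item The cut $(\eta_2;0_b,\eta_3,0_b;1)$ vanishes by path composition.
\item Every remaining cut has left factor $T^\fl_j(1)$, $T^\fl(1,b+1)$ or $S^\fl_j(b+1,1)$. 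These are multiples of $T^\fl(r)$ by depth-one regularization and Lemmas~\ref{lem:DBMMVs} and~\ref{lem:S_a(b+1,c+1)}. The right factor is $\tz^\fm(\cdot)$ or $\tz^\fm(\cdot,\cdot)$.
\end{itemize}
This reproduces exactly the paper's $D_3,D_5,D_7$ for $M^\fm(\check{1},4,4)$. But it holds for every odd $b$, so Theorem~\ref{thm:Glanois} makes $M^\fm(\check{1},2k,2k)$ unramified for all $k\ge1$.

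The paper's Case (III.1) term $\binom{b+3}{3}\tz^\fl(b+4)\ot T^\fm(1,b-2)$ is precisely the contribution of the cut $(\eta_2;0_b,\eta_3,0_3;0)$. For $b\ge5$ the distinct cut $(0;0_3,\eta_3,0_b;1)$ contributes its negative. For $b=3$ the two cuts coincide, which is why Lemma~\ref{lem:3SpecialMMVs}(iii) comes out right.

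So the statement as written breaks at $b=c\ge5$, and no choice of coactions, along your lines or the paper's, can prove it. The list must contain the whole family $\bfs=(1,2k,2k)$. Your even-weight $D_{w-1}$ step and your $D_{b+2}$ step for $a>0$ do follow the paper.
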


\begin{proof}
Let $a'=a+1, b'=b+1, c'=c+1\in\N$. Then
\begin{equation*}
M^\fm(a',b',c';-1,1,1)=M^\fm(\check{a'},b',c')=\sum_{\eta_j=\pm 1} \eta_1\eta_2 I^\fm(0;\eta_1,0_a,\eta_2,0_b,\eta_3,0_c;1).
\end{equation*}

\medskip
\noindent
\underline{Case (I)}: $n=w-1$ is odd and $c>0$. Then
\begin{align*}
D_n M^\fm(\check{a'},b',c')=&\,-2(-1)^a \binom{n-1}{a} T^\fl(n) \ot \log^\fm(2) \ne 0.
\end{align*}

\medskip
\noindent
\underline{Case (II)}: $n=w-1$ is odd and $c=0$. Then
\begin{align*}
D_n M^\fm(\check{a'},b',c')=&\,-3(-1)^a \binom{n-1}{a}T^\fl(n)\ot \log^\fm(2)\ne 0.
\end{align*}

\medskip
\noindent
\underline{Case (III)}: both $w$ and $b$ are odd. We first handle a special case.

\medskip
\noindent
(III.1). If $a=0$ and $b=c\ge 5$ then
\begin{align*}
D_{b+4} M^\fm (\check{1},b',b')&=T^{\fl}(b+4)\ot \tz^\fm (b-1)+\binom{b+3}{3}\tz(b+4)\ot T^\fm (1,b-2)
\end{align*}
which is clearly ramified.

\medskip
\noindent
(III.2). Opposite cases of (III.1) under the restriction that $a+b+c$ is even and $b$ is odd.  Namely,
either $a\ne 0$ or $b\ne c$.  Then
\begin{align*}
&\,D_{b+2} M^\fm(\check{a'},b',c') = \tz^\fl(b+2)\ot  (X'+X)
\end{align*}
where
\begin{align*}
X'=&\,  \gd_{a=b+1}(2^{a+1}-1)\tz^\fm(b+1,c+1)+\gd_{a=0} 2(2^{b+2}-1)\tz^\fm(c+1)\\
&\qquad -\gd_{1\leq a \leq b+1} (-1)^a (2^{b+2}-1)\binom{b+1}{a}\tz^\fm(a,c+1),\\
X=&\, \bigg(\gd_{b+1\ge c\ge 1}(-1)^c \binom{b+1}{c}+\gd_{c\ge 1}(b+1) \bigg)T^\fm (a+1,c)\\
& \qquad-\gd_{a\ge 1} (2^{b+2}-1)(b+1)T^\fm (a,c+1)  .
\end{align*}
Clearly, $X'$ is unramified. Since $a+c$ is odd, by Lemma~\ref{lem:D1S1bfl}\ref{lemCase:DTDS} it remains to show that the two
ramified terms in $X$ do not cancel each other. But we can show that
$D_{a+c}X$ by the same idea as used before. This completes the proof of Case (III).

\medskip
\noindent
\underline{Case (IV))}: both $w$ and $b$ are even. Then $a+c$ is even. Then
\begin{align*}
&\,D_{b+1} M^\fm(\check{a'},b',c') =&\, &&\Big[\gd_{a=b}  t^\fl(b+1)
- \gd_{a\le b} t_{b-a}^\fl(a+1)\Big]\ot \tz^\fm(a+1,c+1) + \tz^\fl(b+1) \ot X,
\end{align*}
where
\begin{align*}
X=&\,
\bigg(2^{b+1}-2+\gd_{b\ge c} (-1)^c\binom{b}{c} \bigg)T^\fm(a+1,c+1).
\end{align*}
Since $\binom{b}{c} \le 2^b$, it is evident that the coefficient in front of $T^{\fm}(a+1, c+1)$ is nonzero. Furthermore, as
$a+c$ is even, $T^{\fm}(a+1, c+1)$ is ramified, which in turn implies that $M^{\fm}(\check{a'},b',c')$ must be ramified.

Combining Cases (I)--(IV) we have finished the proof of the theorem.
\end{proof}

\section{Two families of unramified MMVs of arbitrary depth}\label{sec:2family}
Beyond depth three, we have searched extensively for other unramified MMVs and found three infinite families of such values.
In this section, we present two such families with the parity signature patterns $(1_{a-1},-1,1_{n-a})$ for all $1\le a<n$ in
Theorem~\ref{thm:UnramMMVfamily1} and $(\{-1\}_{a-1},1,\{-1\}_{n-a})$ for all $1<a<n$ in Theorem~\ref{thm:UnramMMVfamily2}. Here,
$1_n$ means that the number $1$ is repeated $n$ times. The same notation scheme will be applied to other singletons such as
$2$  and  $\check{2}$ in the next lemma.

\begin{lem}\label{lem:LieUnram}
Let $n\ge 1$. Then the following two families of motivic MMVs are both unramified, namely, they are both in $\calL_{2n+1}$:
\begin{equation*}
\emph{(1)}\  M^\fl(2_n, \check{1}),\qquad
\emph{(2)}\  M^\fl(\check{2}_n,1).
\end{equation*}
\end{lem}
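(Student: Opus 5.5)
The plan is to prove the two statements together by induction on $n$. We work throughout with the Lie-coalgebra analogue of Theorem~\ref{thm:Glanois}. Write $\xi\in\calL^2_{2n+1}$ for either $M^\fl(2_n,\check{1})$ or $M^\fl(\check{2}_n,1)$. We will show that, for every odd $r<2n+1$, the infinitesimal part of the coaction $D_r\xi$ (equivalently its component of the cobracket) lies in $\calL_r\ot\calL_{2n+1-r}$, i.e.\ in level one. Combining this with the last assertion of Theorem~\ref{thm:Glanois}, that the common kernel of the $D_r$ is $\Q\zeta^\fm(2n+1)$, we then get $\xi\in\calL_{2n+1}+\Q\zeta^\fl(2n+1)=\calL_{2n+1}$. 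For $n=1$ the only cut to examine is $D_1$, and we check it directly from the definition \eqref{equ:defnMotivicMMV}. Alongside, we use that $M^\fl(k)$ for a single even component is a rational multiple of $\zeta^\fl(k)$, by Lemma~\ref{lem:D1S1bfl}\ref{lemCase:T(1)etc},\ref{lemCase:T=tz,S=tz}.

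The key computation expands $M^\fm(2_n,\check{1})$ via \eqref{equ:defnMotivicMMV} as
\[
\sum_{\eta_j=\pm1}\eta_{n+1}\,I^\fm(0;\eta_1,0,\eta_2,0,\dots,\eta_n,0,\eta_{n+1};1),
\]
and similarly for the second family, where the sign is $\prod_{j=1}^n\eta_j$ up to the obvious shift. We then classify the cuts of odd length $r$ in the word $(0;\eta_1,0,\dots,\eta_n,0,\eta_{n+1};1)$ by the shape of their endpoints:
\begin{enumerate}[label=(\alph*)]
\item both endpoints are $0$;
\item one endpoint is $\eta_i$ and the other is $0$;
\item both endpoints are $\eta_i,\eta_j$;
\item cuts touching the outer endpoints $0$ or $1$.
\end{enumerate}
For a fixed position of the cut, the sign weight either factorizes across the left and right factors, or it leaves an unweighted sum over some interior $\eta_k$ that lie only in the left factor. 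In the latter situation Lemma~\ref{lem:D1S1bfl}\ref{Murakami-lemma9} kills the term. This vanishing mechanism is exactly the one used for $D_1M^\fm(\check1,\check1,3)$ in Lemma~\ref{lem:3SpecialMMVs}. We expect the only surviving left factors to be of two kinds. The first kind are alternating cuts $I^\fl(\eta_i;0,\eta_{i+1},0,\dots;\eta_j)$; after the change $t\mapsto \eta_i t$ and the sign bookkeeping, these become lower members $M^\fl(2_k,\check1)$ or $M^\fl(\check2_k,1)$ of the same two families, possibly with the roles of the two families interchanged. The second kind are single-depth values $T^\fl(r)$ or $\tz^\fl(r)$.

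The surviving left factors are in $\calL_r$: the lower family members by the induction hypothesis, and $T^\fl(r),\tz^\fl(r)\in\Q\zeta^\fl(r)$ by the single-depth facts above. For the right factors, the quotient words should again be MMVs of the shape $M(2_{k},\check1)$, $M(\check2_k,1)$, or words with only even/odd-$2$ blocks; their images in $\calL$ are then handled by the same induction. We must also verify that the terms where $\log^\fl2$ could appear in $D_1$ cancel, as in the $D_1$ computation of Lemma~\ref{lem:3SpecialMMVs}(i): the two terms adjacent to $\eta_{n+1}$ appear with opposite sign.

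The main obstacle is the bookkeeping in this cut analysis. We must show that every left factor carrying a $\log^\fl 2$ or a genuinely ramified double value (such as $T^\fl(a,b)$ with $a+b$ even) either vanishes by Lemma~\ref{lem:D1S1bfl}\ref{Murakami-lemma9} or cancels in pairs. The pairing should come from the reversal $I^\fl(a;w;b)=\pm I^\fl(b;\overleftarrow{w};a)$, applied together with the substitution $\eta\mapsto-\eta$, which exchanges the two families. If this direct approach becomes unwieldy, the fallback is a different route. We would first prove a duality $M^\fl(2_n,\check1)=\pm M^\fl(\check2_n,1)$ from the involution $t\mapsto(1-t)/(1+t)$, which permutes $\om_0,\om_{\pm1}$. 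That would reduce the problem to a single family, where we would compare with the generating-function identity \eqref{equ:1nBar2} for $\zeta^\sharp(\{1\}_{n-2},\ol2)$ modulo products.
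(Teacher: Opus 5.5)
The skeleton of your plan (induction on $n$, sort the cuts, kill most of them with Lemma~\ref{lem:D1S1bfl}\ref{Murakami-lemma9}) matches the paper's. However, the concrete content of your cut analysis is wrong, and the one genuine obstruction is asserted to cancel when it does not.

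First, by \eqref{equ:defnMotivicMMV} the weight attached to $M^\fm(\check2_n,1)$ is $\eta_1\eta_{n+1}$, not $\prod_{j\le n}\eta_j$.

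Second, consider the word $(0;\eta_1,0,\dots,\eta_n,0,\eta_{n+1};1)$. The $\eta$'s occupy the odd positions, and the $0$'s and the final $1$ occupy the even ones. A cut of odd length has both endpoints at positions of the same parity. So your type (b) is empty, and cuts $(0;\dots;0)$ vanish trivially. Every cut $(\eta_i;\dots;\eta_k)$ vanishes by Lemma~\ref{lem:D1S1bfl}\ref{Murakami-lemma9}, since neither $\eta_1$ nor $\eta_{n+1}$ can be interior to it. Thus the ``alternating cuts'' you expect to produce lower family members are exactly the ones that die. The only survivors run from the $0$ after $\eta_j$ to the final $1$, and for $1\le j\le n$ they give
\[
D_{2n-2j+1}M^\fm(2_n,\check1)=M^\fl(2_{n-j},\check1)\ot\tz^\fm(2_j),\qquad D_{2n-2j+1}M^\fm(\check2_n,1)=M^\fl(2_{n-j},\check1)\ot t^\fm(2_j).
\]
Consequences:
the left factors always belong to family (1);
the right factors have even weight $2j$, so they are never of the form $M(2_k,\check1)$;
for $r>1$, family (2) follows from family (1) together with Lemma~\ref{lem:D1S1bfl}\ref{lem:MtVUnram}, so no joint induction is needed.

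The real gap is $j=n$. Here $D_1M^\fm(2_n,\check1)=\log^\fl2\ot\tz^\fm(2_n)$ and $D_1M^\fm(\check2_n,1)=\log^\fl2\ot t^\fm(2_n)$ are nonzero. Nothing cancels them: the only other length-one cut touching $\eta_{n+1}$ is $I^\fl(\eta_n;0;\eta_{n+1})=0$. So the motivic values themselves are ramified, which is why the lemma is stated in $\calL$.

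Your Lie-coalgebra reformulation does not repair this as written, because $D_r$ does not descend to $\calL$. For example, $D_3\big(\zeta^\fm(3)\zeta^\fm(5)\big)=\zeta^\fl(3)\ot\zeta^\fm(5)$, even though $\zeta^\fm(3)\zeta^\fm(5)$ is $0$ in $\calL$. So you must fix a lift in $\calH$, and the obvious lift fails the $D_1$ condition of Theorem~\ref{thm:Glanois}.

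The missing idea is to change the lift by a product. Take $M^\fm(2_n,\check1)-M^\fm(\check1)\tz^\fm(2_n)$, resp.\ $M^\fm(\check2_n,1)-M^\fm(\check1)t^\fm(2_n)$.
It has the same image in $\calL$.
It has $D_1=0$ by the derivation property.
It has the same $D_r$ for $r>1$, because $\tz^\fm(2_n)$ and $t^\fm(2_n)$ lie in $\Q\zeta^\fm(2)^n$ (all their cuts vanish by the same parity argument).
Theorem~\ref{thm:Glanois} then applies.

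Your fallback also fails already at $n=1$. By Lemma~\ref{lem:DBMMVs}, $M^\fl(2,\check1)=S^\fl(2,1)=-T^\fl(3)$, whereas $M^\fl(\check2,1)=T^\fl(2,1)=-2T^\fl(3)$. So there is no duality $M^\fl(2_n,\check1)=\pm M^\fl(\check2_n,1)$.
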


\begin{proof}
By definition,
\begin{align*}
 M^\fl(2_n, \check{1})=&\, \sum_{\eta_j=\pm 1} \eta_{n+1} I^\fl(0;\eta_1,0,\dots, \eta_n,0,\eta_{n+1};1),\\
 M^\fl(\check{2}_n,1)=&\, \sum_{\eta_j=\pm 1} \eta_1\eta_{n+1} I^\fl(0;\eta_1,0,\dots, \eta_n,0,\eta_{n+1};1).
\end{align*}
When $n=1$, we have from Lemma~\ref{lem:DBMMVs}
\begin{align*}
 M^\fl(2, \check{1})=&\, S^\fl(2,1)=- T^\fl(3),\qquad
 M^\fl(\check{2},1)=  T^\fl(2, 1)=-2 T^\fl(3)\in\calL_{3}.
\end{align*}
By Lemma~\ref{lem:D1S1bfl}\ref{Murakami-lemma9}, the only nontrivial cutting for $ M^\fm(2_n, \check{1})$ by coaction
$D_{2n-2j+1}$ ($1\le j\le n$) is
\begin{align*}
D_{2n-2j+1}  M^\fm(2_n, \check{1})=&\, \sum_{\eta_j=\pm 1}  \eta_{n+1}D_{2n-2j+1} I^\fm(0;\eta_1,0,\dots,
\eta_n,0,\eta_{n+1};1)\\
=&\, \sum_{\eta_j=\pm 1}  \eta_{n+1} I^\fl(0;\eta_{j+1},0,\dots, \eta_n,0,\eta_{n+1};1)\ot I^\fm(0;\eta_1,0,\dots, \eta_j,0;1)
\\
=&\,  M^\fl(2_{n-j}, \check{1}) \ot \tz^\fm(2_j),
\end{align*}
which is stable by induction assumption, except for $j=n$. But
\begin{equation*}
M^\fl(2_n, \check{1}) =M^\fl(2_n, \check{1})-M^\fl(\check{1}) \tz^\fl(2_n)
\end{equation*}
and since $D_1\tz^\fm(2_n))=0$, by derivation property of $D_1$ we have
\begin{equation*}
    D_1 \Big(M^\fm(2_n, \check{1})-M^\fm(\check{1}) \tz^\fm(2_n))\Big)=0.
\end{equation*}
This shows that $M^\fl(2_n, \check{1})\in\calL_{2n+1}$ is unramified for all $n\ge 1$.

Similarly, by Lemma~\ref{lem:D1S1bfl}\ref{Murakami-lemma9}, the only nontrivial cutting for $ M^\fm(\check{2}_n,1)$ is
\begin{align*}
D_{2n-2j+1} M^\fm(\check{2}_n,1)=&\, \sum_{\eta_j=\pm 1} \eta_1\eta_{n+1}D_{2n-2j+1} I^\fm(0;\eta_1,0,\dots,
\eta_n,0,\eta_{n+1};1)\\
=&\, \sum_{\eta_j=\pm 1} \eta_1\eta_{n+1} I^\fl(0;\eta_{j+1},0,\dots, \eta_n,0,\eta_{n+1};1)\ot I^\fm(0;\eta_1,0,\dots,
\eta_j,0;1) \\
=&\,  M^\fl(2_{n-j}, \check{1}) \ot t^\fm(2_j)
\end{align*}
for $1\le j\le n$, which is stable by induction assumption and the fact that $t^\fm(2_j)\in\calH_{2j}$ by
Lemma~\ref{lem:D1S1bfl}\ref{lem:MtVUnram}, except for $j=n$. But
\begin{equation*}
M^\fl(\check{2}_n,1) =M^\fl(\check{2}_n,1)-M^\fl(\check{1})t^\fl(2_n)
\end{equation*}
and since $D_1t^\fm(2_n)=0$ we get
\begin{equation*}
    D_1 \Big( M^\fm(\check{2}_n,1)-M^\fm(\check{1})t^\fm(2_n)\Big)=0.
\end{equation*}
This shows that $M^\fl(\check{2}_n,1)\in\calL_{2n+1}$ is unramified for all $n\ge 1$. We have completed the proof of the
lemma.
\end{proof}

\begin{thm}\label{thm:UnramMMVfamily1}
Suppose $n\ge 3$. Then the motivic MMVs $M^\fm(2_{a-1},\check{1},2_{n-a})$ are unramified for all $1\le a<n$.
\end{thm}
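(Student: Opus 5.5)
We will apply Theorem~\ref{thm:Glanois} to $M^\fm(2_{a-1},\check{1},2_{n-a})$, which has weight $w=2n-1$. From the definition \eqref{equ:defnMotivicMMV}, the signature $(1_{a-1},-1,1_{n-a})$ gives the sign $\eta_a\eta_{a+1}$. Hence
\begin{equation*}
M^\fm(2_{a-1},\check{1},2_{n-a})=\sum_{\eta_j=\pm1}\eta_a\eta_{a+1}\, I^\fm(0;\eta_1,0,\dots,\eta_{a-1},0,\eta_a,\eta_{a+1},0,\dots,\eta_n,0;1).
\end{equation*}
The plan is to compute $D_r$ for every odd $r<2n-1$. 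By Lemma~\ref{lem:D1S1bfl}\ref{Murakami-lemma9}, summing over a free $\eta_j$ kills any cut whose left factor contains that $\eta_j$ strictly in its interior and whose sign does not depend on $\eta_j$. This leaves only a few cuts, and we will show that each surviving term lies in $\calL_r\ot\calH_{w-r}$.

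\textbf{$D_1$.} The only weight-one cuts involving nonzero letters are those around the adjacent pair $\eta_a,\eta_{a+1}$, together with the cuts $(0;\eta_j;0)$, which vanish. Summing over $\eta_a$ or $\eta_{a+1}$ as in the proof of Lemma~\ref{lem:3SpecialMMVs}(i), the cuts $I^\fl(0;\eta_a;\eta_{a+1})$ and $I^\fl(\eta_a;\eta_{a+1};0)$ should produce $\log^\fl 2$ tensored with two terms. After the change of variables $\eta\to\eta_a\eta$, these two terms are the same depth-$(n-1)$ motivic $\tz$-type value with opposite signs. We therefore expect $D_1=0$, exactly as in Lemma~\ref{lem:3SpecialMMVs}(i),(ii). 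If the two terms do not cancel identically, we would use $t^\fm$/$\tz^\fm$ identities instead.

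\textbf{$D_r$ for odd $r\ge3$.} A cut of odd weight starting right after a $0$ and ending right before a $0$ (the structure of blocks $\eta,0$) must begin at an $\eta_j$ and end at a $0$, or begin at a $0$ and end at an $\eta$. Its interior contains some $\eta$'s, and Lemma~\ref{lem:D1S1bfl}\ref{Murakami-lemma9} forces the vanishing of the sum over any interior $\eta_k$ with $k\ne a,a+1$ whose sign is absent. So the surviving cuts are those whose interior contains $\eta_a$ or $\eta_{a+1}$ (the only signed ones), plus the cuts adjacent to the endpoints $0$ and $1$ as in Lemma~\ref{lem:LieUnram}. After the change of variables, the resulting left factors should be of the form $M^\fl(2_{k},\check 1)$ or $M^\fl(\check 2_k,1)$, possibly reversed by the reflection $t\mapsto 1-t$ and path reversal, or $\tz^\fl$ values. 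All of these lie in $\calL_r$ by Lemma~\ref{lem:LieUnram}. The right factors should be $\tz^\fm(2_j)$, $t^\fm(2_j)$, or shorter MMVs of the same shape $M^\fm(2_{a'-1},\check 1,2_{n'-a'})$, together with the boundary cases $a'=n'$ or $a'=0$. The shorter MMVs are handled by induction on $n$. The base cases $n\le 2$ and the boundary shapes $M^\fm(2_{k},\check 1)$ and $M^\fm(\check1,2_k)$ must be checked separately. For $M^\fm(\check1,2_k)$ we will use the computations of Lemma~\ref{lem:3SpecialMMVs}(ii) and \eqref{equ:1nBar2}, expressed via Euler $\sharp$-sums. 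The value $M^\fm(2_k,\check 1)$ itself is ramified, since its $D_1$ is nonzero, so any occurrence of it as a right factor must pair up and cancel.

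\textbf{Main obstacle.} The hard step is exactly those pairings. Cuts that start inside the block $\eta_{a},\eta_{a+1}$ produce right factors in which the check has migrated to the end, giving something like $M^\fm(2_{k},\check1)$. These must cancel between the cut that removes $\eta_a$ and the cut that removes $\eta_{a+1}$, and the two left factors must be related by $M^\fl(2_k,\check1)$ versus its reversal with opposite sign. The first thing we would try is to pair each cut with its mirror cut and compare the two left factors using the reflection $I^\fl(x;\dots;y)=-I^\fl(y;\dots;x)$ and the sign change $\eta\mapsto-\eta$. If the cancellation is only up to a $\tz^\fl$ term, it is still harmless.
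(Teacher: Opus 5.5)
\textbf{Verdict: genuine gap.} Your framework is the paper's: Glanois' criterion, the vanishing Lemma~\ref{lem:D1S1bfl}\ref{Murakami-lemma9}, Lemma~\ref{lem:LieUnram}, and induction on $n$. Your $D_1$ computation is also correct, since the cuts $I^\fl(0;\eta_a;\eta_{a+1})$ and $I^\fl(\eta_a;\eta_{a+1};0)$ cancel. However, your bookkeeping of the surviving cuts for odd $r\ge3$ is inverted, so the step that actually needs the induction is never justified.

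\textbf{What the surviving cuts really look like.} Every surviving cut has $\eta_a$ or $\eta_{a+1}$ strictly inside the left factor. After rescaling the left factor's letters by its endpoint $\eta$ (the paper's substitution $\eta_j\to\eta_j\eta_b$), the sign involves only interior letters of the left factor. Hence every right factor is the unsigned motivic MZV $\tz^\fm(2_j)$, and no ramified MMV ever appears on the right. The left factors come in two kinds.
\begin{itemize}
\item Exactly one of $\eta_a,\eta_{a+1}$ inside (cuts ending at $\eta_{a+1}$ or starting at $\eta_a$): the left factor is $\pm M^\fl(2_k,\check1)$. This lies in $\calL_r$ by Lemma~\ref{lem:LieUnram} for $k\ge1$. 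The case $k=0$ occurs only at $r=1$ and cancels there.
\item Both inside (initial $0$ to $\eta_b$; $\eta_k$ to the $0$ after $\eta_b$; the $0$ after $\eta_k$ to $\eta_b$ or to $1$): the left factor is a shorter family member $M^\fl(2_p,\check1,2_q)$ with $q\ge1$, reversed in the second case.
\end{itemize}
The second kind is not covered by Lemma~\ref{lem:LieUnram}, contrary to your claim. It is exactly where the induction hypothesis is needed, in the form $M^\fl(2_p,\check1,2_q)\in\calL_r$. For example, the cut $I^\fl(0;\eta_2,\eta_3,0,\eta_4,0;1)$ contributes $M^\fl(\check1,2,2)\ot\tz^\fm(2)$ to $D_5M^\fm(2,\check1,2,2)$. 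Only Lemma~\ref{lem:3SpecialMMVs}(ii), or induction, puts $M^\fl(\check1,2,2)$ in $\calL_5$.

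\textbf{Your main obstacle does not arise.} The cuts starting at $\eta_{a+1}$, which would leave a right factor with the check at the end, have an unsigned interior and both endpoints in $\{\pm1\}$ (namely $\eta_{a+1}$ and $\eta_b$ or $1$). They therefore vanish outright by Lemma~\ref{lem:D1S1bfl}\ref{Murakami-lemma9}. Your paraphrase of that lemma drops the hypothesis that both endpoints are $\pm1$, and that hypothesis is precisely what separates vanishing cuts from surviving ones. All other cuts with unsigned interior have even weight or are $I^\fl(0;\dots;0)=0$.

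So no pairwise cancellation of ramified right factors is needed. The only cancellation is at $r=1$, where the left factor $M^\fl(\check1)=\log^\fl 2$ is ramified. Likewise, $M^\fl(\check2_k,1)$, $t^\fm(2_j)$ and $\tz^\fl$ factors never occur in this family; the first two belong to Theorem~\ref{thm:UnramMMVfamily2}.

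\textbf{Base case.} Your proposed base is fine. $M^\fl(\check1,2)=T^\fl(1,2)=T^\fl(3)$ already suffices to run the inductive step at $n=3$. The paper instead starts at $n=3$, using $S^\fm(2,1,2)$ and Lemma~\ref{lem:3SpecialMMVs}(ii).
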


\begin{proof}
We will prove the theorem by applying induction on $n$. By definition,
\begin{align*}
M^\fm(2_{a-1},\check{1},2_{n-a})=&\, \sum_{\eta_j=\pm 1} \eta_a\eta_{a+1} I^\fm(0;\eta_1,0,\dots, \eta_{a-1},0,
\eta_a,\eta_{a+1},0,\dots,\eta_n,0 ;1).
\end{align*}

When $n=3$, we have two values to consider. First,
\begin{align*}
M^\fm(2,\check{1},2)=&\, S^\fm(2,1,2)\in \calH_5
\end{align*}
by \cite[Theorem~1.4(3)(ii)]{XuZhao2023Aug}. Second, $M^\fm(\check{1},2,2)\in \calH_5$
by Theorem~\ref{thm:triUnramMMV}(5)(ii).

Now we assume $n\ge 4$ and let $r$ be an odd integer. The following picture shows all the positive
ways to cut $M^\fm(2_{a-1},\check{1},2_{n-a})$ when computing its image under the coaction $D_r$.
For convenience, let $w=2n-1$ be its weight.
\begin{center}
\begin{tikzpicture}[scale=0.9]
\node (A0) at (0.05,0)
{$0;\eta_1,0,\dots,\eta_k,0,\dots,\eta_{a-1},0,\eta_a,\eta_{a+1},0,\,\dots\,,\eta_b,0,\dots,\eta_c,0,\dots,\eta_n,0;1$};
\node (A1) at (-2.6,-0.4) {${}$};
\node (A2) at (-4.2,-0.9) {${}$};
\node (A3) at (-2.5,0.6) {${}$};
\node (A4) at (-0.6,0.3) {${}$};
\node (A5) at (-1.8,-0.8) {${}$};
\node (A6) at (0,0.5) {${}$};
\node (A7) at (3.95,-0.6) {${}$};
\node (A8) at (0.8,0.7) {${}$};
\node (A9) at (1.25,-0.5) {${}$};
\node (A10) at (3,-0.8) {${}$};
\node (A11) at (2.8,-0.4) {${}$};
\node (A12) at (4.1,0.5) {${}$};
\draw (-6.3,-0.25) to (-6.3,-0.4) to (A1) node {$\cic{1}$} to (-.1,-0.4) to (-.1,-0.25);
\draw (-6.33,-0.25) to (-6.33,-0.9) to (A2) node {$\cic{2}$} to (2.05,-0.9) to (2.05,-0.25);
\draw (-4.02,0.15) to (-4.02,0.6) to (A3) node {$\cic{3}$} to (-.9,0.6) to (-.9,0.15);
\draw (-3.95,0.15) to (-3.95,0.3) to (A4) node {$\cic{4}$} to (2.45,0.3) to (2.45,0.2);
\draw (-3.58,-0.25) to (-3.58,-0.8) to (A5) node {$\cic{5}$} to (0,-0.8) to (0,-0.25);
\draw (-3.58,0.25) to (-3.58,0.5) to (A6) node {$\cic{6}$} to (1.95,0.5) to (1.95,0.15);
\draw (-3.54,-0.25) to (-3.54,-0.6) to (A7) node {$\cic{7}$} to (6.4,-0.6) to (6.4,-0.25);
\draw (-.81,0.15) to (-.81,0.7) to (A8) node {$\cic{8}$} to (2.5,0.7) to (2.5,0.2);
\draw (.17,-0.25) to (.17,-0.5) to (A9) node {$\cic{9}$} to (1.95,-0.5) to (1.95,-0.25);
\draw (.1,-0.25) to (.1,-0.8) to (A10) node {$\cic{\hskip-1.2pt1\!0}$} to (6.5,-0.8) to (6.5,-0.25);
\draw (2.15,-0.25) to (2.15,-0.4) to (A11) node {$\cic{\hskip-1.2pt1\!1}$} to (3.65,-0.4) to (3.65,-0.25);
\draw (2,0.15) to (2,0.5) to (A12) node {$\cic{\hskip-1.2pt1\!2}$} to (6.4,0.5) to (6.4,0.25);
\node (A) at (0,-1.6) {Possible cuts of $D_r M^\fm(2_{a-1},\check{1},2_{n-a})$ ($1\le a<n$) for odd $r$. Here, $1\le k<a<b\le
n$.};
\end{tikzpicture}
\end{center}
Then, we have
\begin{align*}
\ncic{1}=&\, \gd_{r=2a-1}\, \sum \eta_a\eta_{a+1} I^\fl\big(0;\{\eta_j,0\}_{j=1}^{a-1},\eta_a;\eta_{a+1}\big)\ot
I^\fm\big(0;\{\eta_j,0\}_{j=a+1}^n;1\big) \\
=&\, \gd_{r=2a-1}\, \sum \eta_a I^\fl\big(0;\{\eta_j,0\}_{j=1}^{a-1},\eta_a;1\big)\ot
I^\fm\big(0;\{\eta_j,0\}_{j=a+1}^n;1\big)\\
=&\, \gd_{r=2a-1}\, M^\fl(2_{a-1}, \check{1}) \ot \tz^\fm(2_{n-a})
\left\{
  \begin{array}{ll}
     \in \calL_r\ot \calH_{w-r}  \text{ by Lemma~\ref{lem:LieUnram}} \qquad & \hbox{if $a>1$;} \\
     \text{it is canceled by $\ncic{8}$)} & \hbox{if $a=1$,}
  \end{array}
\right.\\
\ncic{2}=&\, \gd_{r=2b-3,b>a+1} \, \sum \eta_a\eta_{a+1}
I^\fl\big(0;\{\eta_j,0\}_{j=1}^{a-1},\eta_a,\{\eta_j,0\}_{j=a+1}^{b-1};\eta_b\big)\ot I^\fm\big(0;\{\eta_j,0\}_{j=b}^n;1\big)
\\
=&\, \gd_{r=2b-3,b>a+1} \, \sum \eta_a\eta_{a+1}
I^\fl\big(0;\{\eta_j,0\}_{j=1}^{a-1},\eta_a,\{\eta_j,0\}_{j=a+1}^{b-1};1\big)\ot I^\fm\big(0;\{\eta_j,0\}_{j=b}^n;1\big) \\
&\, \hskip1cm (\text{by change of variables $\eta_j\to \eta_j\eta_b$ for the first factor})\\
=&\, \gd_{r=2b-3,b>a+1} \,  M^\fl(2_{a-1}, \check{1},2_{b-a-1}) \ot \tz^\fm(2_{n-b+1})\in\calL_r\ot \calH_{w-r} \quad\text{by
induction}, \\
\ncic{3}=&\, \gd_{r=2a-2k-1,k<a}\,  \sum \eta_a\eta_{a+1} I^\fl(\eta_k;0,\{\eta_j,0\}_{j=k+1}^{a-1};\eta_a) \ot
I^\fm\big(0;\{\eta_j,0\}_{j=1}^{k-1},\eta_k,\eta_a,\{\eta_j,0\}_{j=a+1}^n;1\big), \\
=&\, 0 \hskip1cm \text{(by Lemma~\ref{lem:D1S1bfl}\ref{Murakami-lemma9})}, \\
\ncic{4}= &\,\gd_{r=2b-2k-1,b>a>k}\,  \sum \eta_a\eta_{a+1}
I^\fl(\eta_k;\{0,\eta_j\}_{j=k+1}^a,\{\eta_j,0\}_{j=a+1}^{b-1},\eta_b;0) \ot
I^\fm\big(0;\{\eta_j,0\}_{j=1}^{k},\{\eta_j,0\}_{j=b+1}^n;1\big) \\
= &\,-\gd_{r=2b-2k-1,b>a>k}\, M^\fl(2_{b-a-1}, \check{1},2_{a-k}) \ot \tz^\fm(2_{n-b+k})\in\calL_r\ot \calH_{w-r}
\quad\text{by induction}, \\
\ncic{5}=&\, \gd_{r=2a-2k-1,a>k}\,  \sum \eta_a\eta_{a+1} I^\fl(0;\{\eta_j,0\}_{j=k+1}^{a-1},\eta_a;\eta_{a+1}) \ot
I^\fm\big(0;,\{\eta_j,0\}_{j=1}^{k},\{\eta_j,0\}_{j=a+1}^n;1\big)  \\
=&\, \gd_{r=2a-2k-1,a>k}\,  \sum \eta_a  I^\fl(0;\{\eta_j,0\}_{j=k+1}^{a-1},\eta_a;1) \ot
I^\fm\big(0;\{\eta_j,0\}_{j=1}^{k},\{\eta_j,0\}_{j=a+1}^n;1\big)  \\
=&\, \gd_{r=2a-2k-1}\, M^\fl(2_{a-k-1}, \check{1}) \ot \tz^\fm(2_{n-a+k})\\
=&\, \left\{
  \begin{array}{ll}
     \in \calL_r\ot \calH_{w-r}  \text{ by Lemma~\ref{lem:LieUnram}} \qquad & \hbox{if $k<a-1$;} \\
     \text{it is canceled by $\ncic{8}$)} & \hbox{if $k=a-1>0$,}
  \end{array}
\right.\\
\ncic{6}=&\, \gd_{r=2a-2k-1,a>k}\,  \sum \eta_a\eta_{a+1}
I^\fl(0;\{\eta_j,0\}_{j=k+1}^{a-1},\eta_a,\{\eta_j,0\}_{j=a+1}^{b-1};\eta_b) \ot
I^\fm\big(0;\{\eta_j,0\}_{j=1}^{k},\{\eta_j,0\}_{j=b}^n;1\big)  \\
= &\,\gd_{r=2b-2k-1,b>a>k}\, M^\fl(2_{a-k-1}, \check{1},2_{b-a-1}) \ot \tz^\fm(2_{n-b+k+1}) \in\calL_r\ot \calH_{w-r}
\quad\text{by induction}, \\
\ncic{7}=&\,\gd_{r=2n-2k-1,a>k}\,  \sum \eta_a\eta_{a+1} I^\fl(0;\{\eta_j,0\}_{j=k+1}^{a-1},\eta_a,\{\eta_j,0\}_{j=a+1}^n;1)
\ot I^\fm\big(0;\{\eta_j,0\}_{j=1}^{k};1\big)  \\
= &\,\gd_{r=2n-2k-1,b>a>k}\, M^\fl(2_{a-k-1}, \check{1},2_{n-a}) \ot \tz^\fm(2_k) \in\calL_r\ot \calH_{w-r} \quad\text{by
induction}\\
&\hskip3cm (\text{this can also be regarded as the $b=n+1$ case of $\ncic{6}${}}), \\
\ncic{8}=&\,  \gd_{r=2b-2a-1,b>a}\,  \sum \eta_a\eta_{a+1} I^\fl(\eta_a;\{\eta_j,0\}_{j=a+1}^{b-1},\eta_b;0) \ot
I^\fm\big(0;\{\eta_j,0\}_{j=1}^{a},\{\eta_j,0\}_{j=b+1}^n;1\big)  \\
=&\, - \gd_{r=2b-2a-1,b>a}\,  \sum  \eta_{a+1} I^\fl(0;\eta_b,\{0,\eta_j\}_{j=b-1}^{a+1};1) \ot
I^\fm\big(0;\{\eta_j,0\}_{j=1}^{a},\{\eta_j,0\}_{j=b+1}^n;1\big)  \\
=&\, - \gd_{r=2b-2a-1,b>a}\,   M^\fl(2_{b-a-1}, \check{1})  \ot  \tz^\fm(2_{n-b+a})  \\
=&\, \left\{
  \begin{array}{ll}
    \in\calL_r\ot \calH_{w-r} \text{ by Lemmas~\ref{lem:LieUnram}}\quad & \hbox{if $b>a+1$;} \\
    \text{it is canceled by $\ncic{1}$ } & \hbox{if $a=1,b=a+1$;}\\
    \text{it is canceled by $\ncic{5}$ } & \hbox{if $a>1, b=a+1$,}
  \end{array}
\right. \\
\ncic{9}=&\,\gd_{r=2b-2a-3,b>a+1}\,  \sum \eta_a\eta_{a+1} I^\fl(\eta_{a+1};0,\{\eta_j,0\}_{j=a+2}^{b-1};\eta_b) \ot
I^\fm\big(0;\{\eta_j,0\}_{j=1}^{a+1},\{\eta_j,0\}_{j=b+1}^n;1\big)  \\
=&\, 0 \hskip1cm \text{(by Lemma~\ref{lem:D1S1bfl}\ref{Murakami-lemma9})} ,\\
\ncic{\hskip-1.2pt1\!0}=&\,\gd_{r=2n-2a-1}\,  \sum \eta_a\eta_{a+1} I^\fl(\eta_{a+1};0,\{\eta_j,0\}_{j=a+2}^n;1) \ot
I^\fm\big(0;\{\eta_j,0\}_{j=1}^{a+1};1\big)  \\
=&\, 0 \hskip1cm \text{(by Lemma~\ref{lem:D1S1bfl}\ref{Murakami-lemma9}, which is the $b=n+1$ case of $\ncic{9}$)}, \\
\ncic{\hskip-1.2pt1\!1}=&\, \ncic{\hskip-1.2pt1\!2}= 0 \hskip1cm \text{(by Lemma~\ref{lem:D1S1bfl}\ref{Murakami-lemma9})}.
\end{align*}
Further, from $\ncic{1}$, $\ncic{5}$\ and $\ncic{8}$\ we see that $D_1M^\fm(2_{a-1},\check{1},2_{n-a})=0$. Hence,
Theorem~\ref{thm:Glanois} implies that $M^\fm(2_{a-1},\check{1},2_{n-a})\in\calH_{2n-1}$. This completes the proof of the
theorem.
\end{proof}

\begin{thm}\label{thm:UnramMMVfamily2}
Suppose $n\ge 3$. Then the motivic MMVs  $M^\fm(\check{2}_{a-1},1,\check{2}_{n-a})$  are unramified for all $1<a<n$.
\end{thm}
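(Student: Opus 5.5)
The plan is to mimic the proof of Theorem~\ref{thm:UnramMMVfamily1} step by step, inducting on $n$ and using Lemma~\ref{lem:LieUnram}(2) in place of Lemma~\ref{lem:LieUnram}(1). By definition \eqref{equ:defnMotivicMMV}, the sign pattern $(\{-1\}_{a-1},1,\{-1\}_{n-a})$ gives
\begin{equation*}
M^\fm(\check{2}_{a-1},1,\check{2}_{n-a})=\sum_{\eta_j=\pm 1}\eta_1\eta_a\eta_{a+1}\, I^\fm\big(0;\eta_1,0,\dots,\eta_{a-1},0,\eta_a,\eta_{a+1},0,\dots,\eta_n,0;1\big),
\end{equation*}
so the word has exactly the same shape as in Theorem~\ref{thm:UnramMMVfamily1}. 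Only the sign weight differs, by the extra factor $\eta_1$.

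\textbf{Base case $n=3$, $a=2$.} Here the value is $M^\fm(\check{2},1,\check{2})=t$-type with a middle even component, namely $T^\fm(2,1,2)$ with the Kaneko--Tsumura pattern $(-1,1,-1)$. This is $(E,1,E)$, which is unramified by Theorem~\ref{thm:triUnramMMV}(3)(iii).

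\textbf{Inductive step, $n\ge4$.} I reuse the twelve cuts $\ncic{1}$--$\ncic{12}$ from the picture in the proof of Theorem~\ref{thm:UnramMMVfamily1}. The cuts $\ncic{3}$, $\ncic{9}$, $\ncic{10}$, $\ncic{11}$, $\ncic{12}$ still vanish by Lemma~\ref{lem:D1S1bfl}\ref{Murakami-lemma9}. In each of these the subsequence summed inside $I^\fl$ contains an $\eta_j$ that appears nowhere else in the weight or in the right factor, apart from a linear sign that is absorbed after substituting $\eta_j\to\eta_j\eta_k$. I will have to recheck this with the extra factor $\eta_1$.

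For the remaining cuts I perform the same changes of variables $\eta_j\to\eta_j\eta_b$ (or reversal plus duality, as in $\ncic{8}$). The expected outcome has three kinds of terms:
\begin{itemize}
\item cuts not containing $\eta_1$ (the analogues of $\ncic{5}$, $\ncic{6}$, $\ncic{8}$) produce a left factor $M^\fl(2_{\bullet},\check{1},2_{\bullet})$-type value or $M^\fl(2_\bullet,\check 1)$, tensored with a right factor $t^\fm(2_{\bullet})$, since the right factor keeps the weight $\eta_1$ and alternates in all its variables;
\item cuts containing $\eta_1$ (the analogues of $\ncic{1}$, $\ncic{2}$, $\ncic{7}$) produce $M^\fl(\check{2}_{\bullet},1,\check{2}_{\bullet})$ or $M^\fl(\check{2}_\bullet,1)$ on the left, tensored with $\tz^\fm(2_\bullet)$ or $t^\fm(2_\bullet)$;
\item $\ncic{4}$ produces a reversed shape of the second kind.
\end{itemize}
Every such term lies in $\calL_r\ot\calH_{w-r}$. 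The tools are: the induction hypothesis (valid for the interior-one case with smaller $n$); Lemma~\ref{lem:LieUnram}(2) for $M^\fl(\check 2_m,1)$; Lemma~\ref{lem:LieUnram}(1) for $M^\fl(2_m,\check1)$, to be used when the left factor loses $\eta_1$; Lemma~\ref{lem:D1S1bfl}\ref{lem:MtVUnram} for $t^\fm(2_j)$; and, where the left factor has depth at most three, Theorem~\ref{thm:UnramMMVfamily1} or Theorem~\ref{thm:triUnramMMV}.

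\textbf{Main obstacle.} The delicate point is the boundary pieces, where an index $1$ or $\check{1}$ moves to the first or last slot and the induction hypothesis no longer applies. Since $a>1$ is required, cases with $a=1$ or $a=n$ arising in subwords must not survive. I expect these weight-one pieces to occur only in $D_1$ (from $\ncic{1}$, $\ncic{5}$, $\ncic{8}$ with $r=1$) and to cancel pairwise, as in Theorem~\ref{thm:UnramMMVfamily1}; this yields $D_1M^\fm=0$. The step I expect to be hardest is checking the signs: the extra $\eta_1$ together with the reversal in $\ncic{8}$ must give exactly opposite contributions, for example $\pm\log^\fl2\ot t^\fm(\cdots)$. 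If some $r>1$ boundary term such as $M^\fl(\check 2_{a-1},1)$ appears, it is covered by Lemma~\ref{lem:LieUnram}(2). Once $D_1=0$ and all $D_r$ images are shown stable, Theorem~\ref{thm:Glanois} finishes the proof.
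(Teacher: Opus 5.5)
Your plan is essentially the paper's proof. It uses induction on $n$ with base case $T^\fm(2,1,2)$, the same twelve cuts as in Theorem~\ref{thm:UnramMMVfamily1}, and the vanishing of the cuts with both endpoints $\pm1$ by Lemma~\ref{lem:D1S1bfl}\ref{Murakami-lemma9}. The surviving left factors are handled by Lemma~\ref{lem:LieUnram}, the induction hypothesis and Theorem~\ref{thm:UnramMMVfamily1}, and $D_1=0$ follows by cancellation.

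Only your bookkeeping needs correcting:
\begin{enumerate}
\item Every surviving right factor is a $t^\fm(2_\bullet)$.
\item The cuts $\ncic{4}$ and $\ncic{7}$ do not carry $\eta_1$ in their left factor. Up to sign they give $M^\fl(2_\bullet,\check 1,2_\bullet)$, so Theorem~\ref{thm:UnramMMVfamily1} is needed in all depths, not only up to three.
\item Since $a>1$, the cut $\ncic{1}$ has $r=2a-1\ge 3$. Hence $D_1$ receives only $\ncic{5}$ with $k=a-1$ and $\ncic{8}$ with $b=a+1$, and these two cancel.
\end{enumerate}
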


\begin{proof}
We will prove the theorem by applying induction on $n$. By definition,
\begin{align*}
M^\fm(\check{2}_{a-1},1,\check{2}_{n-a})=&\, \sum_{\eta_j=\pm 1} \eta_1\eta_a\eta_{a+1} I^\fm(0;\eta_1,0,\dots, \eta_{a-1},0,
\eta_a,\eta_{a+1},0,\dots,\eta_n,0 ;1).
\end{align*}
When $n=3$, we have
\begin{align*}
M^\fm(\check{2},1,\check{2})=&\, T^\fm(2,1,2)\in \calH_5
\end{align*}
by \cite[Theorem~1.2(3)(iii)]{XuZhao2023Aug}.

Now we assume $n\ge 4$ and let $r$ be an odd integer. The following picture shows all the positive
ways to cut $M^\fm(\check{2}_{a-1},1,\check{2}_{n-a})$ when computing its image under the coaction $D_r$.
As before, let $w=2n-1$ be its weight.

\begin{center}
\begin{tikzpicture}[scale=0.9]
\node (A0) at (0.05,0)
{$0;\eta_1,0,\dots,\eta_k,0,\dots,\eta_{a-1},0,\eta_a,\eta_{a+1},0,\,\dots\,,\eta_b,0,\dots,\eta_c,0,\dots,\eta_n,0;1$};
\node (A1) at (-2.6,-0.4) {${}$};
\node (A2) at (-4.2,-0.9) {${}$};
\node (A3) at (-2.5,0.6) {${}$};
\node (A4) at (-0.6,0.3) {${}$};
\node (A5) at (-1.8,-0.8) {${}$};
\node (A6) at (0,0.5) {${}$};
\node (A7) at (3.95,-0.6) {${}$};
\node (A8) at (0.8,0.7) {${}$};
\node (A9) at (1.25,-0.5) {${}$};
\node (A10) at (3,-0.8) {${}$};
\node (A11) at (2.8,-0.4) {${}$};
\node (A12) at (4.1,0.5) {${}$};
\draw (-6.3,-0.25) to (-6.3,-0.4) to (A1) node {$\cic{1}$} to (-.1,-0.4) to (-.1,-0.25);
\draw (-6.33,-0.25) to (-6.33,-0.9) to (A2) node {$\cic{2}$} to (2.05,-0.9) to (2.05,-0.25);
\draw (-4.02,0.15) to (-4.02,0.6) to (A3) node {$\cic{3}$} to (-.9,0.6) to (-.9,0.15);
\draw (-3.95,0.15) to (-3.95,0.3) to (A4) node {$\cic{4}$} to (2.45,0.3) to (2.45,0.2);
\draw (-3.58,-0.25) to (-3.58,-0.8) to (A5) node {$\cic{5}$} to (0,-0.8) to (0,-0.25);
\draw (-3.58,0.25) to (-3.58,0.5) to (A6) node {$\cic{6}$} to (1.95,0.5) to (1.95,0.15);
\draw (-3.54,-0.25) to (-3.54,-0.6) to (A7) node {$\cic{7}$} to (6.4,-0.6) to (6.4,-0.25);
\draw (-.81,0.15) to (-.81,0.7) to (A8) node {$\cic{8}$} to (2.5,0.7) to (2.5,0.2);
\draw (.17,-0.25) to (.17,-0.5) to (A9) node {$\cic{9}$} to (1.95,-0.5) to (1.95,-0.25);
\draw (.1,-0.25) to (.1,-0.8) to (A10) node {$\cic{\hskip-1.2pt1\!0}$} to (6.5,-0.8) to (6.5,-0.25);
\draw (2.15,-0.25) to (2.15,-0.4) to (A11) node {$\cic{\hskip-1.2pt1\!1}$} to (3.65,-0.4) to (3.65,-0.25);
\draw (2,0.15) to (2,0.5) to (A12) node {$\cic{\hskip-1.2pt1\!2}$} to (6.4,0.5) to (6.4,0.25);
\node (A) at (0,-1.6) {Possible cuts of $D_r M^\fm(\check{2}_{a-1},1,\check{2}_{n-a})$ ($1< a< n$) for odd $r$.};
\end{tikzpicture}
\end{center}
The main difference between this family and that in Theorem~\ref{thm:UnramMMVfamily1} is that $a\ne 1$ here. Thus, we have
\begin{align*}
\ncic{1}=&\, \gd_{r=2a-1}\, M^\fl(\check{2}_{a-1},1) \ot t^\fm(2_{n-a})
   \in \calL_r\ot \calH_{w-r}  \text{ by Lemmas~\ref{lem:D1S1bfl}\ref{lem:MtVUnram} and \ref{lem:LieUnram}},\\
\ncic{2}=&\, \gd_{r=2b-3,b>a+1} \,  M^\fl(\check{2}_{a-1}, 1,\check{2}_{b-a-1}) \ot t^\fm(2_{n-b+1})\in\calL_r\ot \calH_{w-r}
\quad\text{by induction}, \\
\ncic{3}=&\, 0 \hskip1cm \text{(by Lemma~\ref{lem:D1S1bfl}\ref{Murakami-lemma9})}, \\
\ncic{4}= &\,-\gd_{r=2b-2k-1,b>a>k}\, M^\fl(2_{b-a-1},\check{1},2_{a-k}) \ot t^\fm(2_{n-b+k})\in\calL_r\ot \calH_{w-r} \quad\text{ by
Theorem~\ref{thm:UnramMMVfamily1}}, \\
\ncic{5}=&\, \left\{
  \begin{array}{ll}
     \in \calL_r\ot \calH_{w-r}  \text{ by Lemma~\ref{lem:LieUnram}} \qquad & \hbox{if $k<a-1$;} \\
     \text{it is canceled by $\ncic{8}$)} & \hbox{if $k=a-1>0$,}
  \end{array}
\right.\\
\ncic{6}= &\,\gd_{r=2b-2k-1,b>a>k}\, M^\fl(2_{a-k-1}, \check{1},2_{b-a-1}) \ot t^\fm(2_{n-b+k+1}) \in\calL_r\ot \calH_{w-r}
\quad\text{by Theorem~\ref{thm:UnramMMVfamily1}}, \\
\ncic{7}= &\,\gd_{r=2n-2k-1,b>a>k}\, M^\fl(2_{a-k-1}, \check{1},2_{n-a}) \ot t^\fm(2_k) \in\calL_r\ot \calH_{w-r} \quad\text{by
Theorem~\ref{thm:UnramMMVfamily1}}\\
&\hskip3cm (\text{this can also be regarded as the $b=n+1$ case of $\ncic{6}$}), \\
\ncic{8}=&\, \left\{
  \begin{array}{ll}
    \in\calL_r\ot \calH_{w-r} \text{ by Lemmas~\ref{lem:LieUnram}}\quad & \hbox{if $b>a+1$;} \\
    \text{it is canceled by $\ncic{5}$ } & \hbox{if $b=a+1$,}
  \end{array}
\right. \\
\ncic{9}=&\, 0 \hskip1cm \text{(by Lemma~\ref{lem:D1S1bfl}\ref{Murakami-lemma9})} ,\\
\ncic{\hskip-1.2pt1\!0}=&\, 0 \hskip1cm \text{by Lemma~\ref{lem:D1S1bfl}\ref{Murakami-lemma9}  (which is the $b=n+1$ case of $\ncic{9}${})}, \\
\ncic{\hskip-1.2pt1\!1}=&\, \ncic{\hskip-1.2pt1\!2}= 0 \hskip1cm \text{(by Lemma~\ref{lem:D1S1bfl}\ref{Murakami-lemma9})}.
\end{align*}
Further, from $\ncic{5}$\ and $\ncic{8}$\ we see that $D_1M^\fm(\check{2}_{a-1},1,\check{2}_{n-a})=0$. Hence,
Theorem~\ref{thm:Glanois} implies that $M^\fm(\check{2}_{a-1},1,\check{2}_{n-a})\in\calH_{2n-1}$. This completes the proof of the
theorem.
\end{proof}

\section{A height one family of unrmaified MMVs}\label{sec:htOneFamily}
Let $d$ be an even positive integer. Define a family of height one motivic MMVs by
\begin{align*}
V_\emptyset^\fm(d)= T^\fm(1_{d-1},2):=&\, \sum_{\eta_1,\dots,\eta_d=\pm 1}\left(\prod_{i=1}^d \eta_i \right)
I^\fm(0;\eta_1,\eta_2,\dots,\eta_d,0;1), \\
V_0^\fm(d)= M^\fm(1_{d-1},2;\{-1\}_{d-1},1):=&\, \sum_{\eta_1,\dots,\eta_d=\pm 1} \eta_1\eta_d
I^\fm(0;\eta_1,\eta_2,\dots,\eta_d,0;1),
\end{align*}
and for all $1\le j\le d$,
\begin{align*}
V_j^\fm(d)= M^\fm(1_{d-1},2;\bfgs_j):=&\, \sum_{\eta_1,\dots,\eta_d=\pm 1}  \eta_1\cdots\widehat{\eta_j}\cdots \eta_d
I^\fm(0;\eta_1,\eta_2,\dots,\eta_d,0;1),
\end{align*}
where $\widehat{\eta_j}$ means that $\eta_j$ is removed from the product, and
\begin{equation*}
  \bfgs_j=
\left\{
  \begin{array}{ll}
   \big( \{-1,1\}_k, \{1,-1\}_{d/2-k} \big), & \hbox{if $j=2k+1$;} \\
   \big( \{-1,1\}_{k-1},-1,-1, \{1,-1\}_{d/2-k}\big), \quad & \hbox{if $j=2k$.}
  \end{array}
\right.
\end{equation*}

In this section, we will prove that that $V_j(d)$ ($j=\emptyset, 0$ or $1< j<d$) are unramified by finding their explicit
expressions except for $V_0(d)$ for which the expression is only conjectural. Using this information we then lift these
expressions to the motivic version. For $V_0^\fm(d)$ we can actually prove it is unramified without assuming the conjectured
formula. In fact, the conjectured expression of $V_0(d)$ would imply the explicit expression of $V_0^\fm(d)$ in terms of
motivic MZVs.

\subsection{The height one families $V^\fm_j(d)$, $j\ne 0$}
We start with the exact expressions of their non-motivic version in terms of MZVs.

\begin{thm}\label{thm:UnramMMVfamily3ht1}
Let $d\in\N$ be even. Then $V_j(d)$  are all unramified for $j=\emptyset, 2,3,\dots,d-1$. More precisely, for all $1\le j\le
d$
\begin{align}\label{equ:Vjd-non-Motivic}
V_j(d)=&\,T(d+1)+2\zeta(\ol{d-j+1},\ol{j})-2\zeta(d-j+1,\ol{j}).
\end{align}
Further, $V_j(d)+2(\delta_{j=1}-\delta_{j=d}) T(d) \log 2$ is unramified for all $1\le j\le d.$
\end{thm}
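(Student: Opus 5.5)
The plan is to work first at the level of iterated integrals, derive \eqref{equ:Vjd-non-Motivic} by a change of variables, and then deduce unramifiedness by computing the coaction on the resulting depth-two Euler sums.

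\textbf{Step 1: rewrite $V_j(d)$ as a single iterated integral.} The key observation is that $\sum_{\eta=\pm1}\eta\,\tx_\eta=\om_{-1}$ while $\sum_{\eta=\pm1}\tx_\eta=\om_1$. Summing over the $\eta_i$ one at a time in the definition of $V_j(d)$ gives
\begin{equation*}
V_j(d)=\int_0^1 \om_{-1}^{\,j-1}\,\om_1\,\om_{-1}^{\,d-j}\,\om_0 ,
\end{equation*}
up to the global sign conventions of \eqref{equ:periodMap}, which I will fix once by comparison with $d=2$. Similarly $V_\emptyset(d)=T(1_{d-1},2)=\int_0^1\om_{-1}^d\om_0$. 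Next I split the single $\om_1$ as $\om_1=\om_{-1}+(\om_1-\om_{-1})=\om_{-1}+2\tc$. This gives
\begin{equation*}
V_j(d)=T(1_{d-1},2)+2\int_0^1\om_{-1}^{\,j-1}\,\tc\,\om_{-1}^{\,d-j}\,\om_0 .
\end{equation*}

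\textbf{Step 2: identify the pieces by the substitution $t\mapsto u=(1-t)/(1+t)$.} Under this involution of $[0,1]$, which reverses the endpoints, $\om_{-1}=d\log\frac{1+t}{1-t}$ becomes $\om_0$ up to sign, and $\om_0$ becomes $\om_{-1}$. The form $\tc$ becomes a combination of $\om_0$ and $\tc$ in $u$, since $1+t=2/(1+u)$. Reversing the path then turns the first piece into $\int\om_{-1}\om_0^{d}=T(d+1)$, which is the duality $T(1_{d-1},2)=T(d+1)$. The second piece becomes $\int_0^1\om_{-1}\om_0^{d-j}(\text{combination of }\om_0,\tc)\om_0^{j-1}$. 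Expanding $\om_{-1}=\tb-\tc$ and reading off Euler sums, the $\om_0$-part should cancel against the $\om_0$-part of the transformed $\tc$. What remains is the difference $\zeta(\ol{d-j+1},\ol{j})-\zeta(d-j+1,\ol{j})$, which yields \eqref{equ:Vjd-non-Motivic}. Every step here is an identity of formal iterated integrals under a projective automorphism of $\{0,\pm1,\infty\}$, so it lifts verbatim to the motivic integrals $I^\fm$ via the axioms cited in Section~\ref{sec:setup}. I do not need the data mine.

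\textbf{Step 3: unramifiedness via Theorem~\ref{thm:Glanois}.} Since $d$ is even, the weight $w=d+1$ is odd, and $T^\fm(d+1)$ is a rational multiple of $\zeta^\fm(d+1)$. It therefore suffices to treat $E_j:=\zeta^\fm(\ol{a},\ol{b})-\zeta^\fm(a,\ol{b})$ with $a=d-j+1$ and $b=j$. I would compute $D_r E_j$ for odd $r<w$ directly from the cut formula.
\begin{itemize}
\item In depth two, every cut leaves a right factor of depth at most one. Such a factor is of the form $\zeta^\fm(n)$ or $\zeta^\fm(\ol n)=(2^{1-n}-1)\zeta^\fm(n)$, which lies in $\calH$ when $n\ge2$, or $\log^\fm2$ when $n=1$.
\item For $r\ge3$, a $\log^\fm 2$ right factor can only appear when $w-r=1$, so the left factor has odd weight $d\ge3$. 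I expect these left contributions, $\zeta^\fl$ of depth $\le2$ and odd weight, to cancel between the two terms of $E_j$. This is by the same mechanism as Lemma~\ref{lem:D1S1bfl}\ref{Murakami-lemma9}, since the two sums differ only in the sign attached to the first letter.
\item For $D_1$, the only surviving cuts are those isolating a single $\pm1$ adjacent to the boundary. They produce $\log^\fl2\ot(\cdots)$, which vanishes unless $a=1$ or $b=1$, that is, unless $j\in\{1,d\}$.
\end{itemize}
In those two boundary cases I expect $D_1V_j^\fm(d)=\mp2\,\log^\fl2\ot T^\fm(d)$. Since $D_1(\log^\fm2\cdot T^\fm(d))=\log^\fl2\ot T^\fm(d)$, the corrected element $V_j^\fm(d)+2(\delta_{j=1}-\delta_{j=d})T^\fm(d)\log^\fm2$ is killed by $D_1$. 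Its higher $D_r$ images remain in $\calL_r\ot\calH_{w-r}$, so Theorem~\ref{thm:Glanois} applies. The case $j=\emptyset$ is just $T^\fm(d+1)$.

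\textbf{Main obstacle.} I expect the hardest part to be Step 2: tracking exactly how $\tc$ transforms under the M\"obius substitution, including the regularization at the endpoint and the signs. This is what produces precisely the combination $\ol{d-j+1},\ol j$ minus $d-j+1,\ol j$ with coefficient $2$. I would first verify the formula for $d=2$, where $V_1,V_2$ are double values checkable against Lemma~\ref{lem:DBMMVs}, and then run the general bookkeeping. A secondary concern is the cancellation of the $\log^\fm 2$ right factor in $D_{d}E_j$; if it fails for some $j$, I would instead rewrite $E_j$ via the stuffle and shuffle relations in terms of $\zeta^\fm(\ol n)$ and products before applying the coaction.
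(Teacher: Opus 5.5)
Your Steps 1--2 are the paper's own argument: the same integral $\int_0^1\om_{-1}^{j-1}\om_1\om_{-1}^{d-j}\om_0$ and the same duality $t\mapsto(1-t)/(1+t)$. There is one slip. Under this substitution $\tc$ pulls back to exactly $-\tc$, with no $\om_0$-component; it is $\tb$, and hence $\om_1=\om_{-1}+2\tc$, that picks up $\om_0$. So there is nothing to cancel, and your second piece becomes $2\int_0^1(\tb-\tc)\ta^{d-j}\tc\ta^{j-1}$ immediately. That is exactly $2\zeta(\ol{d-j+1},\ol j)-2\zeta(d-j+1,\ol j)$.

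The genuine difference is in Step 3.
\begin{itemize}
\item \textbf{The paper.} It quotes the motivic parity reduction \eqref{equ:parityRed1}--\eqref{equ:parityRed2}, which writes each odd-weight double Euler sum as a polynomial in depth-one values. It then reads off that $\zeta^\fm(\ol1)$ survives only for $j\in\{1,d\}$, where it combines to $\mp2T^\fm(d)\log^\fm2$.
\item \textbf{Your route.} You apply Theorem~\ref{thm:Glanois} directly to $\zeta^\fm(\ol a,\ol b)-\zeta^\fm(a,\ol b)$. This works, and more easily than you fear. Both $w=d+1$ and $r$ are odd, so $w-r$ is even and a weight-one right factor never occurs. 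Your second bullet, which also treats $d$ as odd, is therefore vacuous, and the hoped-for cancellation is never needed.
\item \textbf{Why the cuts are harmless.} Every cut with nonzero contribution contains exactly one of the two letters $\pm1$. A cut containing both leaves $I^\fm(0;0,\dots,0;1)=0$, and one containing neither has a vanishing left factor. Hence each surviving cut pairs a depth-one left factor (a multiple of $\zeta^\fl(r)$ for $r\ge3$, so even the stricter reading $\calL=\calL^1$ is satisfied) with a depth-one right factor of even weight at least $2$ (a rational multiple of $\zeta^\fm(w-r)$).
\item \textbf{The $D_1$ cuts.} They give exactly $D_1V^\fm_j(d)=2(\delta_{j=d}-\delta_{j=1})\log^\fl2\ot T^\fm(d)$, as you predict.
\end{itemize}

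What each route buys:
\begin{itemize}
\item Yours avoids the parity theorem entirely. Because you lift the duality motivically from the start (the paper itself notes, in the proof of Lemma~\ref{lem:T1n}, that this substitution is motivic), it also yields \eqref{equ:Vjd-Motivic} at once. That would make the long cut computation in Theorem~\ref{thm:UnramMMVfamily3} unnecessary.
\item The paper's route produces explicit evaluations in terms of single zeta values, as in the example for $V_5(8)$. It also gives the coaction formulas of Corollary~\ref{cor:DrOnDblEulerSums}.
\end{itemize}
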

\begin{proof}
By duality of MTVs, we immediately see that $V_\emptyset(d)=T(d+1)$ which is unramified.
Next, for all $1\le j\le d$, we have
\begin{equation*}
V_j(d)=\int_0^1\om_{-1}^{j-1} \om_1 \om_{-1}^{d-j} \om_0
\end{equation*}
where all the 1-forms $\om_{-1}, \om_1$ and $\om_0$ are defined by \eqref{defn:oms}. By duality,
\begin{align*}
V_j(d)=&\,\int_0^1   \om_{-1} \om_0^{d-j}(\om_0+\om_1-\om_{-1})\om_0^{j-1}\\
=&\,\int_0^1  (\tb-\tc)\ta^{d-j}(\ta+2\tc)\ta^{j-1}\\
=&\,T(d+1)+2\zeta(\ol{d-j+1},\ol{j})-2\zeta(d-j+1,\ol{j})
\end{align*}
which is unramified by the parity reduction formula \cite[Prop.~4.1]{XuZhao2023Aug}.
More explicitly,
\begin{align}
2\zeta^\fm(\ol{d+1-j},\ol{j})=&\, -\zeta^\fm(d+1)-(-1)^j 2\sum_{s=0}^{d/2}
\bigg[\binom{d-2s}{d-j} \zeta^\fm(\ol{d+1-2s})\zeta^\fm(2s) \notag\\
&\, +\binom{d-2s}{j-1} \zeta^\fm(\ol{d+1-2s})\zeta^\fm(2s)\bigg] +\delta_{2|j}
2\zeta^\fm(\ol{d+1-j})\zeta^\fm(\ol{j}),\label{equ:parityRed1}\\
2\zeta^\fm(d+1-j,\ol{j})=&\, -\zeta^\fm(\ol{d+1})-(-1)^j 2\sum_{s=0}^{d/2}
\bigg[\binom{d-2s}{d-j} \zeta^\fm(d+1-2s)\zeta^\fm(\ol{2s}) \notag\\
&\, +\binom{d-2s}{j-1} \zeta^\fm(\ol{d+1-2s}) \zeta^\fm(\ol{2s})\bigg] +\delta_{2|j}
2\zeta^\fm(d+1-j)\zeta^\fm(\ol{j}).\label{equ:parityRed2}
\end{align}
Observe that $\zeta^\fm(\ol{1})$ can appear only when $d-2s=0$ and,  either $j=1$ or $j=d$. If $j=1$ then the ramified parts
of $V_1(d)$ combine to yield
$$
2\zeta^\fm(\ol{1})\big(\zeta^\fm(d)-\zeta^\fm(\ol{d})\big)
=-2T^\fm(d)\log^\fm(2).
$$
If $j=d$ then $\delta_{2|j}=1$ and the ramified parts of $V_d(d)$ combine to yield
$$
2\zeta^\fm(\ol{1})\big(\zeta^\fm(\ol{d})-\zeta^\fm(d)\big)
=2T^\fm(d)\log^\fm(2).
$$
This completes the proof of the theorem after we apply the period map.
\end{proof}

\begin{cor}\label{cor:DrOnDblEulerSums}
Let $d\in\N$ be even and $1\le j\le d$. Then we have
\begin{align*}
  D_1\Big(\zeta^\fm(\ol{d+1-j},\ol{j})-\zeta^\fm(d+1-j,\ol{j})\Big)
=&\,
\Big(\delta_{j=1}-\delta_{j=d}\Big)  \zeta^\fl(\ol{1})\ot T^\fm(d) .
\end{align*}
For any positive odd integer $1<r<d$,
\begin{align*}
&\, D_r\Big(\zeta^\fm(\ol{d+1-j},\ol{j})-\zeta^\fm(d+1-j,\ol{j})\Big)\\
=&\,  (-1)^j
\bigg[\binom{r-1}{d-j} \Big(\zeta^\fl(r)\ot \zeta^\fm(\ol{d+1-r})
-\zeta^\fl(\ol{r})\ot \zeta^\fm(d+1-r)\Big)  \\
&\,\qquad  +\binom{r-1}{j-1}  \Big(\zeta^\fl(\ol{r})\ot \zeta^\fm(\ol{d+1-r})
- \zeta^\fl(\ol{r})\ot \zeta^\fm(d+1-r) \Big) \bigg] .
\end{align*}
Moreover,
\begin{align}\label{equ:diffInLie}
2\zeta^\fl(\ol{d+1-j},\ol{j})-2\zeta^\fl(d+1-j,\ol{j})=&\, -T^\fl(\ol{d+1})-(-1)^j  \binom{d}{j} T^\fl(d+1).
\end{align}

\end{cor}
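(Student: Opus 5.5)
The plan is to apply the derivations $D_r$ directly to the motivic parity reduction formulas \eqref{equ:parityRed1} and \eqref{equ:parityRed2}. Their right-hand sides contain only depth-one motivic Euler sums and products of two such sums. On a depth-one Euler sum of weight $r$, $D_r$ acts as $\zeta^\fl(\cdot)\ot 1$, and it kills products in $\calL$. So $D_r$ of a product $\zeta^\fm(x)\zeta^\fm(y)$ reduces by the derivation rule to the terms where one factor has weight exactly $r$. Moreover, even-weight single zetas $\zeta^\fm(2s)$ and $\zeta^\fm(\ol{2s})$ with $s>0$ vanish in $\calL$, since $\zeta^\fm(\ol{2s})=(2^{1-2s}-1)\zeta^\fm(2s)$ is a rational multiple of a power of $\zeta^\fm(2)$. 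Hence only odd-weight factors can sit in the left slot.

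First I will subtract \eqref{equ:parityRed2} from \eqref{equ:parityRed1}. The $\delta_{2|j}$ terms become $\delta_{2|j}\big(\zeta^\fm(\ol{d+1-j})-\zeta^\fm(d+1-j)\big)\zeta^\fm(\ol j)$. For even $j$ both factors have odd or even weight as appropriate, and the only way the left factor can survive in $\calL$ with odd weight is through $\zeta^\fm(\ol{d+1-j})-\zeta^\fm(d+1-j)$. For $D_1$, this contributes only when $j=d$, via $\zeta^\fm(\ol 1)$.

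For $D_1$, the only weight-one factor is $\zeta^\fm(\ol1)=-\log^\fm 2$. It arises from the $s=d/2$ summands, with binomials $\binom{0}{d-j}$ and $\binom{0}{j-1}$, so only $j=d$ or $j=1$ contribute, together with the $\delta_{2|j}$ term at $j=d$. Collecting these terms reproduces the computation at the end of the proof of Theorem~\ref{thm:UnramMMVfamily3ht1}. That computation gives the ramified part $\mp T^\fm(d)\log^\fm 2$, and applying $D_1$ yields the stated $(\delta_{j=1}-\delta_{j=d})\zeta^\fl(\ol1)\ot T^\fm(d)$, using $\zeta^\fm(d)-\zeta^\fm(\ol d)=T^\fm(d)$ up to the normalization already fixed there.

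For odd $1<r<d$, the surviving terms are those with $d+1-2s=r$, i.e.\ $2s=d+1-r$. In these the left factor is $\zeta^\fl(r)$ or $\zeta^\fl(\ol r)$ and the right factor is $\zeta^\fm(2s)$ or $\zeta^\fm(\ol{2s})$. The binomials become $\binom{r-1}{d-j}$ and $\binom{r-1}{j-1}$. The $\delta_{2|j}$ term contributes only if $r=d+1-j$, which forces $j$ even and the binomial equal to $\binom{r-1}{d-j}$ anyway, so I will absorb it into the same coefficient. The other product factor cannot have odd weight $r$ in the left slot since it has even weight $2s$. I then rewrite $\zeta^\fm(2s)$ and $\zeta^\fm(\ol{2s})$ as $\zeta^\fm(d+1-r)$ and $\zeta^\fm(\ol{d+1-r})$ in the notation of the statement. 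The two lines of the formula are just the differences of the $\sharp$-expansions of the two formulas.

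Finally, for \eqref{equ:diffInLie}, I will project the difference of the two parity reduction formulas to $\calL_{d+1}$. All products vanish, leaving $-\zeta^\fl(d+1)+\zeta^\fl(\ol{d+1})$ plus the $s=0$ terms $-(-1)^j\big[\binom{d}{d-j}+\binom{d}{j-1}\big]$ times the appropriate single zetas. The $s=0$ product terms involve $\zeta^\fm(0)=-\tfrac12$ regularization, and these actually survive as single zetas. I then convert to $T^\fl$ via $T^\fl(d+1)=$ a combination of $\zeta^\fl(d+1)$ and $\zeta^\fl(\ol{d+1})$ and Pascal's rule $\binom{d}{j}+\binom{d}{j-1}=\binom{d+1}{j}$. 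This bookkeeping of the $s=0$ terms with $\zeta(0)=-1/2$ and the $T$ versus $\zeta$ normalizations is the main obstacle. I would check it numerically at $d=2$, $j=1,2$ before trusting the signs.
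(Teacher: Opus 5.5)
The strategy is the same as the paper's. Apply $D_r$ to \eqref{equ:parityRed1} and \eqref{equ:parityRed2} through the derivation rule and discard even-weight depth-one factors in $\calL$. For \eqref{equ:diffInLie}, keep only the $s=0$ summands, where $\zeta(0)=\zeta(\ol 0)=-\tfrac12$. Your $D_1$ computation is correct. You should say explicitly that the terms $\zeta^\fm(1)\zeta^\fm(\ol d)$ in \eqref{equ:parityRed2} vanish because $\zeta^\fm(1)=0$.

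The step that fails is your absorption of the $\delta_{2|j}$ term when $1<r<d$. For $j$ even and $r=d+1-j$, this term contributes
\[
\big(\zeta^\fl(\ol r)-\zeta^\fl(r)\big)\ot\zeta^\fm(\ol{d+1-r})=-T^\fl(r)\ot\zeta^\fm(\ol{d+1-r})\ne 0 .
\]
This is not a multiple of either bracket in the displayed formula. Added to the $\binom{r-1}{d-j}=1$ bracket, it replaces $\zeta^\fl(r)\ot\zeta^\fm(\ol{d+1-r})$ by $\zeta^\fl(\ol r)\ot\zeta^\fm(\ol{d+1-r})$. No coefficient bookkeeping removes it, and the displayed $D_r$ formula is actually false in this case.

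Take $d=4$, $j=2$, $r=3$. The three cuts of $I^\fm(0;1,0,0,-1,0;1)$ and $I^\fm(0;-1,0,0,-1,0;1)$ give
\[
D_3\big(\zeta^\fm(\ol3,\ol2)-\zeta^\fm(3,\ol2)\big)=3\,\zeta^\fl(\ol3)\ot\big(\zeta^\fm(\ol2)-\zeta^\fm(2)\big)=\tfrac{27}{8}\,\zeta^\fl(3)\ot\zeta^\fm(2).
\]
The right-hand side of the statement equals $\tfrac52\,\zeta^\fl(3)\ot\zeta^\fm(2)$, using $\zeta^\fl(\ol3)=-\tfrac34\zeta^\fl(3)$ and $\zeta^\fm(\ol2)=-\tfrac12\zeta^\fm(2)$. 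The discrepancy is exactly $-T^\fl(3)\ot\zeta^\fm(\ol2)$.

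The paper's one-line proof silently drops the same term. What your argument actually establishes is the displayed formula plus $\delta_{2|j}\,\delta_{r=d+1-j}\big(\zeta^\fl(\ol r)-\zeta^\fl(r)\big)\ot\zeta^\fm(\ol j)$, and you should state it that way. When $j$ is odd, or $r\neq d+1-j$, the formula holds as displayed.

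For \eqref{equ:diffInLie}, your projection gives $-\big(1+(-1)^j\binom{d}{j}\big)T^\fl(d+1)$. The $\binom{d}{j-1}\zeta^\fl(\ol{d+1})$ terms cancel between the two formulas, so Pascal's rule plays no role. The symbol $T^\fl(\ol{d+1})$ in the statement is undefined and must be read as $T^\fl(d+1)$. With that reading, your proposed check at $d=2$ goes through: both sides equal $\tfrac74\zeta^\fl(3)$ for $j=1$ and $-\tfrac72\zeta^\fl(3)$ for $j=2$.
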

\begin{proof}
The formula for $D_r$ follows from  \eqref{equ:parityRed1} and \eqref{equ:parityRed2} using
its derivation property $D_r(XY)=(1\ot X)D_r(Y)+(1\ot Y)D_r(X)$ and the fact that $\zeta^\fl(n)=\zeta^\fl(\ol{n})=0$ for all
even $n$. The second claim
is also clear from \eqref{equ:parityRed1} and \eqref{equ:parityRed2} since modulo products we now have
\begin{align*}
2\zeta^\fl(\ol{d+1-j},\ol{j})=&\, -\zeta^\fl(d+1)+(-1)^j
\bigg[\binom{d}{j}+\binom{d}{j-1}\bigg]\zeta^\fl(\ol{d+1})  , \\
2\zeta^\fl(d+1-j,\ol{j})=&\, -\zeta^\fl(\ol{d+1})+(-1)^j
\bigg[\binom{d}{j} \zeta^\fl(d+1)+\binom{d}{j-1}\zeta^\fl(\ol{d+1}) \bigg] .
\end{align*}
Thus, the definition $T^\fl(d+1)=\zeta^\fl(d+1)-\zeta^\fl(\ol{d+1})$ yields \eqref{equ:diffInLie} immediately.
\end{proof}

We now provide two simple lemmas which will be used in this paper and our subsequent work on unramified alternating MMVs.

\begin{lem}\label{lem:T1n}
For all positive integer $n$,
\begin{align*}
T^\fm(1_n)=\sum \eta_1\cdots \eta_n I^\fm\big(0;\eta_1,\dots,\eta_n;1\big)= \frac{(\log^\fm 2)^n}{n!} .
\end{align*}
\end{lem}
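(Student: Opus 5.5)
The idea is to show that the iterated integral $T^\fm(1_n)$ is the $n$-th shuffle power of a single letter divided by $n!$. With $\omega_1-\omega_{-1}$ as below, $T^\fm(1_n)$ is the iterated integral of the word $(\omega_1-\omega_{-1})^n$, and $T^\fm(1)=\log^\fm 2$ by Lemma~\ref{lem:D1S1bfl}\ref{lemCase:T(1)etc}.

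First we identify the form. From the definition \eqref{equ:defnMotivicMMV}, with all $s_j=1$ and $\bfeps=(-1,1,-1,\dots)$ (and $\eps_0=1$), the weight attached to $\eta_j$ is $\eta_j^{(\eps_j-\eps_{j-1})/2}$. This exponent is always $\pm1$, so it equals $\eta_j$ since $\eta_j=\pm1$. Hence
\[
T^\fm(1_n)=\sum_{\eta_j=\pm1}\eta_1\cdots\eta_n\, I^\fm(0;\eta_1,\dots,\eta_n;1).
\]
By multilinearity of motivic iterated integrals in the letters, this is the single integral $I^\fm(0;\omega,\dots,\omega;1)$, where the formal letter is $\omega=\sum_{\eta}\eta\,[\eta]$. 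Under $\dch$, with the sign convention \eqref{equ:periodMap}, this letter corresponds to the one-form $\tc-\tb$, i.e.\ to $\pm(\omega_1-\omega_{-1})$ up to an overall sign that is the same for every $n$.

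Next we apply the shuffle product for motivic iterated integrals, which is one of the axioms (I1)--(I6). Every shuffle of $\omega^{a}$ with $\omega^{b}$ is the same word $\omega^{a+b}$, so
\[
I^\fm(0;\omega^a;1)\,I^\fm(0;\omega^b;1)=\binom{a+b}{a} I^\fm(0;\omega^{a+b};1).
\]
By induction on $n$ this gives $T^\fm(1)^n=n!\,T^\fm(1_n)$.

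The main point to check is that no regularization problem arises, so that this shuffle identity holds for these particular integrals. This is unproblematic: the integrals run from $0$ to $1$, the letters $\pm1$ are never $0$, and the pairing of $\eta_1=1$ against $\eta_1=-1$ cancels the divergence at the endpoint $1$. In any case, the regularized motivic integrals satisfy the shuffle product by definition. The sign also needs care: the overall sign of the letter is $\epsilon=\pm1$, fixed independently of $n$, so the $n$-fold integral picks up $\epsilon^n$. Both sides scale the same way, $(\epsilon\, T^\fm(1))^n=\epsilon^n T^\fm(1)^n$, so the sign is consistent. Using $T^\fm(1)=\log^\fm 2$ then gives $T^\fm(1_n)=(\log^\fm 2)^n/n!$. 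As a sanity check, for $n=1$ the formula reduces to Lemma~\ref{lem:D1S1bfl}\ref{lemCase:T(1)etc} itself.
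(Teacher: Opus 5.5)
Your argument is correct, and it takes a genuinely different and much shorter route than the paper.

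\textbf{The paper's route.} It first computes the regularized period $T^\reg(1_n)=(\log 2)^n/n!$ using the substitution $t\to(1-t)/(1+t)$. It then does a cut-by-cut analysis (path reversal, changes of variables $\eta_j\to\eta_j\eta_k$, telescoping, induction on $n$). This shows $D_r T^\fm(1_n)=0$ for odd $r>1$ and $D_1 T^\fm(1_n)=\log^\fl 2\ot T^\fm(1_{n-1})$, which matches the coaction of $(\log^\fm 2)^n/n!$. Theorem~\ref{thm:Glanois} then gives the identity up to a term $c\,\zeta^\fm(n)$, and the period map forces $c=0$.

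\textbf{Your route.} You use only the motivic shuffle axiom. Extend $I^\fm(0;\cdot\,;1)$ linearly to the formal letter $\omega=[1]-[-1]$. Every shuffle of $\omega^a$ with $\omega^b$ is the word $\omega^{a+b}$; equivalently, the weight $\eta_1\cdots\eta_n$ is symmetric under permutations. Hence $I^\fm(0;\omega;1)^n=n!\,I^\fm(0;\omega^n;1)$, and $T^\fm(1)=\log^\fm 2$ from Lemma~\ref{lem:D1S1bfl}\ref{lemCase:T(1)etc} finishes.

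\textbf{Comparison.} Your route avoids the coaction computation, the Brown--Glanois kernel theorem, and the analytic evaluation of the period. What the paper's heavier method buys is a template it reuses almost verbatim for Lemma~\ref{lem:Xjn}, where the word is no longer a power of a single letter.

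\textbf{Two inaccurate side remarks.} Nothing in the argument rests on them, but they should be fixed.
\begin{itemize}
\item The divergence at the endpoint $1$ is not cancelled. The last letter corresponds to the form $\tb-\tc=2\,dt/(1-t^2)$, which has a pole at $t=1$. Already $T(1)=\sum_{n\text{ odd}}2/n$ diverges, and the paper's truncation gives $\frac{1}{n!}(\log 2-\log\eps)^n$. So the lemma genuinely concerns regularized objects. Your fallback is the correct justification and should be the only one stated: motivic iterated integrals, defined with tangential base points, satisfy the shuffle product unconditionally.
\item In the paper's notation $\om_1-\om_{-1}=2\tc$, which is not the relevant form. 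With the sign convention in \eqref{equ:periodMap}, the letter $[1]-[-1]$ maps under $\dch$ to $\tb-\tc=\om_{-1}$, not to $\tc-\tb$. Thus $T(1_n)=\int_0^1\om_{-1}^n$. The whole sign discussion can be dropped, since $T^\fm(1)^n=n!\,T^\fm(1_n)$ is a purely formal identity at the motivic level.
\end{itemize}
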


\begin{proof}
Let $\eps$ be a small positive number. Then, by duality substitution $t\to (1-t)/(1+t)$, which is motivic by
\cite[Appendix A.3]{Glanois2015} we see that
\begin{align*}
 T_\eps(1_n):=&\, \int_0^{1-\eps} \om_{-1}^n= \int_{\eps/(2-\eps)}^1 \om_0^n
 =\frac1{n!} \left(\int_{\eps/(2-\eps)}^1 \om_0\right)^n
 =\frac1{n!}\left(-\log \frac{\eps}{2-\eps}\right)^n \\
 =&\, \frac1{n!}\left(-\log \eps+\log 2 \right)^n +O(\eps\log^{n-1} \eps).
\end{align*}
We see that the regularized value
\begin{equation}\label{equ:Treg1n}
\dch(T^\fm(1_n))= T^{\reg}(1_n) =\frac1{n!}(\log 2 )^n.
\end{equation}

We now prove the lemma by induction on $n$. If $n=1$, this is trivial. Now we assume $n>1$
and consider all the possible cuts of $D_r T^\fm(1_n)$ as shown in the following picture:
\begin{center}
\begin{tikzpicture}[scale=0.9]
\node (A0) at (0.05,0) {$0;\eta_1,\ldots\, ,\eta_k, \ldots\, ,\eta_{r+1},\ldots\, ,\eta_{n-r},\ldots\, ,  \eta_{k+r+1},
\dots,\eta_n;1$};
\node (A1) at (-3.4,-0.4) {${}$};
\node (A2) at (-0.3,0.4) {${}$};
\node (A3) at (2.7,-0.4) {${}$};
\draw (-4.86,-0.25) to (-4.86,-0.4) to (A1) node {$\cic{1}$} to (-1.2,-0.4) to (-1.2,-0.25);
\draw (-3.0,0.15) to (-3.0,0.4) to (A2) node {$\cic{2}$}to  (2.6,0.4) to (2.6,0.15);
\draw (.6,-0.25) to (.6,-0.4)   to (A3) node {$\cic{3}$}to (4.9,-0.4) to (4.9,-0.25);
\node (A) at (0,-1.6) {Possible cuts of $D_r T^\fm(1_n)$ for odd $r$.};
\end{tikzpicture}
\end{center}

Then we have
\begin{align*}
\ncic{1}=&\,  \sum \eta_1\cdots \eta_n I^\fl\big(0;\eta_1,\dots,\eta_r;\eta_{r+1}\big)\ot
I^\fm\big(0;\{\eta_i\}_{i=r+1}^n;1\big) \\
=&\,   T^\fl(1_r) \ot  X^\fm_1(n-r)
\end{align*}
where we have set
\begin{equation*}
X^\fm_j(d)=\sum \eta_1\cdots \widehat{\eta_j} \cdots \eta_n I^\fm\big(0;\eta_1,\dots,\eta_n;1\big).
\end{equation*}
Similarly,
\begin{align}
\ncic{2}=&\,\sum_{k=2}^{n-r-1}  \sum \eta_1\cdots \widehat{\eta_{k+r+1}}\cdots\eta_n I^\fl\big(0;\eta_{k+1},\dots,\eta_{k+r};1\big)\ot
I^\fm\big(0;\{\eta_i\}_{i=1}^k,\{\eta_i\}_{i=k+r+2}^n;1\big) \label{equ:showCancelEta1}\\
-&\,\sum_{k=2}^{n-r-1}  \sum \eta_1 \cdots \widehat{\eta_{k}}\cdots\eta_n I^\fl\big(0;\eta_{k+1},\dots,\eta_{k+r};1\big)\ot
I^\fm\big(0;\{\eta_i\}_{i=1}^k,\{\eta_i\}_{i=k+r+2}^n;1\big) \label{equ:showCancelEta2}\\
=&\,\sum_{k=2}^{n-r-1}   T^\fl(1_r) \ot \Big( X^\fm_{k+1}(n-r)-X^\fm_{k}(n-r)\Big) \notag
\end{align}
by path reversal, namely, $I^\fl(0;a_1,\dots,a_r;1)=I^\fl(0;a_r,\dots,a_1;1)$ for all odd $r$. Also notice that when we
changed the variables $\eta_{k+1}\to \eta_{k+1}\eta_{k},\dots,\eta_{k+r}\to \eta_{k+r}\eta_{k}$ to produce
\eqref{equ:showCancelEta2}  the extra factor $\eta_{k}^r=\eta_{k}$ appears in the front which cancels the original $\eta_{k}$.
The same is true when producing \eqref{equ:showCancelEta1}.

Similar computation shows that
\begin{align*}
\ncic{3}=&\,  T^\fl(1_r) \ot \Big(T^\fm(1_{n-r})-X^\fm_{n-r}(n-r)\Big) .
\end{align*}

Putting all the above together, by Lemma~\ref{lem:T1n} for $r>1$ and by telescoping for $r=1$, we see that
\begin{align*}
D_r T^\fm(1_n)
=\left\{
   \begin{array}{ll}
 \log^\fl 2\ot  T^\fm_1(n-1), & \hbox{if $r=1$;} \\
    0, & \hbox{if $r> 1$.}
   \end{array}
 \right.
\end{align*}
It is clear that $D_r(\log^\fm 2)^n=0$ except for  $D_1(\log^\fm 2)^n=n\log^\fl 2 \ot (\log^\fm 2)^{n-1}$.
By the descent criterion in Theorem~\ref{thm:Glanois}, we see that
$T^\fm(1_n)-(\log^\fm 2)^n/n!=c \zeta^\fm (n)$ for some $c\in\Q$. Applying the period map $\dch$ and
using \eqref{equ:Treg1n} we see that $c=0$. This concludes our proof of the lemma.
\end{proof}

\begin{lem}\label{lem:Xjn}
For all positive integers $1\le j\le n$, we have
\begin{align}\label{equ:DiffXT}
X^\fm_j(n)=T^\fm(1_n)+\sum_{s=0}^{n-j} (-1)^{s+n-j} 2 \binom{n-1-s}{j-1}\zeta^\fm (\ol{n-s})T^\fm(1_{s}).
\end{align}
In particular,
\begin{equation*}
 X^\fl_j(n)=
 \left\{
   \begin{array}{ll}
      (-1)^{n-j} 2\binom{n-1}{j-1}\zeta^\fl (\ol{n}), & \hbox{if $n\ge 1$;} \\
    -\log^\fl 2, & \hbox{if $j=n=1$.}
   \end{array}
 \right.
\end{equation*}
\end{lem}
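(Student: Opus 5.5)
The plan is to prove \eqref{equ:DiffXT} first for the regularized periods, using duality as in Lemma~\ref{lem:T1n}. Then we lift it to the motivic level by comparing coactions and applying Theorem~\ref{thm:Glanois}, by induction on $n$.

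\emph{Step 1 (analytic identity).} By definition and \eqref{defn:oms},
\[
X_j(n)=\int_0^1 \om_{-1}^{j-1}\om_1\om_{-1}^{n-j},
\]
understood with shuffle regularization. Writing $\om_1=\om_{-1}+2\tc$ gives
\[
X_j(n)=T(1_n)+2\int_0^1 \om_{-1}^{j-1}\tc\,\om_{-1}^{n-j}.
\]
We truncate at $1-\eps$ and apply the duality substitution $t\to(1-t)/(1+t)$, which is motivic by \cite[Appendix A.3]{Glanois2015}. Under it, $\om_{-1}\mapsto \om_0$ (as in the proof of Lemma~\ref{lem:T1n}), $\tc$ goes to $\pm dt/(1+t)$, and the path is reversed. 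The second integral therefore becomes $\pm\int_{\eps/(2-\eps)}^1 \om_0^{n-j}\tc\,\om_0^{j-1}$. We expand the leading block $\om_0^{n-j}$ near the lower endpoint by the shuffle relation: move $s$ of the leading $\om_0$'s out as the factor $\frac{1}{s!}\big(\int_{\eps/(2-\eps)}^1\om_0\big)^s$. The remaining letters give integrals $\int_0^1 \om_0^{a}\,\tc\,\om_0^{j-1}$ with $a=0$, and these are $\pm\zeta(\ol{n-s})$ after the standard shuffle of the trailing $\om_0$'s, which produces the binomial $\binom{n-1-s}{j-1}$ and the sign $(-1)^{s+n-j}$. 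By \eqref{equ:Treg1n}, the factor $\frac1{s!}(-\log\eps+\log2)^s$ regularizes to $T(1_s)$. Summing over $0\le s\le n-j$ gives the period version of \eqref{equ:DiffXT}. The delicate point is tracking signs and the exact binomial coefficient.

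\emph{Step 2 (motivic lift).} We induct on $n$; the case $n=1$ is $X_1(1)=\sum_\eta I^\fm(0;\eta;1)$, which equals $-\log^\fm 2$ directly. For $n>1$ we compute $D_rX^\fm_j(n)$ through the cuts, exactly as in the three-cut picture of Lemma~\ref{lem:T1n}. Each cut factors, after the change of variables $\eta_i\to\eta_i\eta_k$ and path reversal, into one of two forms:
\begin{itemize}[leftmargin=1cm]
\item $T^\fl(1_r)\otimes X^\fm_{k}(n-r)$, which is $0$ for $r>1$ by Lemma~\ref{lem:T1n};
\item $X^\fl_{k}(r)\otimes T^\fm(1_{n-r})$, arising when the cut contains the position $j$.
\end{itemize}
By the induction hypothesis, the Lie factors $X^\fl_k(r)$ are explicit. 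On the right-hand side of \eqref{equ:DiffXT}, we compute $D_r$ with the derivation property together with Lemma~\ref{lem:T1n} and $D_r\zeta^\fm(\ol m)=\delta_{r=m}\zeta^\fl(\ol m)\otimes 1$. We then check that the two sides agree for every odd $r<n$. Theorem~\ref{thm:Glanois} (its kernel statement) shows that the difference is a rational multiple of $\zeta^\fm(n)$, and Step~1 together with the period map $\dch$ forces that multiple to be $0$.

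\emph{Step 3 (the Lie statement).} Modulo products, only the $s=0$ term survives, since $T^\fm(1_s)=(\log^\fm2)^s/s!$ is a product for $s\ge2$ and the $s=1$ term is a product $\zeta^\fm(\ol{n-1})\log^\fm 2$ when $n\ge 2$. Also $T^\fl(1_n)=0$ for $n\ge2$. This leaves $X^\fl_j(n)=(-1)^{n-j}2\binom{n-1}{j-1}\zeta^\fl(\ol n)$.

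In the exceptional case $j=n=1$, the formula \eqref{equ:DiffXT} reads $T^\fm(1)+2\zeta^\fm(\ol1)=\log^\fm2-2\log^\fm2=-\log^\fm2$. This is consistent with $X^\fl_1(1)=-\log^\fl 2$.

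The main obstacle I expect is the bookkeeping in Step~2. Matching the cut terms that involve position $j$ with the derivation terms of $\zeta^\fm(\ol{n-s})T^\fm(1_s)$ requires a binomial identity of Vandermonde type. I would verify it first for small $n$ and then argue by generating functions.
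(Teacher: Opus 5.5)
Your architecture matches the paper's. You prove a regularized period identity via the substitution $t\to(1-t)/(1+t)$ with truncation at $1-\eps$. You then match $D_r$ of both sides by induction on $n$, apply the kernel statement of Theorem~\ref{thm:Glanois}, and use the period map to remove the $\Q\zeta^\fm(n)$ ambiguity.

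Step~1 takes a more direct route than the paper. The paper uses the shuffle recursion $(j-1)a^\reg_{j,n}=a_{j-1,n-1}\log 2-(n-j+1)a^\reg_{j-1,n}$ with a downward induction; you strip the leading $\ta$'s in one go. Your wording is loose: the surviving convergent words are $\tc\,\ta^{n-1-s}$, not $\ta^a\tc\ta^{j-1}$. But the identity $\ta^{a}\tc\ta^{b}=\sum_{s=0}^{a}(-1)^{a-s}\binom{a+b-s}{b}\,\ta^{s}\sha\tc\ta^{a+b-s}$ with $a=n-j$, $b=j-1$ gives exactly the coefficient $(-1)^{s+n-j}\binom{n-1-s}{j-1}$ of $\zeta(\ol{n-s})(\log 2)^s/s!$. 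So this step is fine and arguably cleaner.

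Step~2, however, misdescribes the cuts exactly where the work lies. Rescaling the $r$ letters of a cut by an endpoint sign $\eta_e$ produces $\eta_e^r=\eta_e$, which cancels the factor $\eta_e$ in the weight. Since $\eta_j$ is already absent, a cut avoiding position $j$ gives one of the following, not $T^\fl(1_r)\ot X^\fm_k(n-r)$:
\begin{itemize}
\item $T^\fl(1_r)\ot X^\fm_{a,b}(n-r)$, with \emph{two} omitted signs; for example, the initial cut with $j\ge r+2$ gives $T^\fl(1_r)\ot X^\fm_{1,j-r}(n-r)$;
\item $T^\fl(1_r)\ot T^\fm(1_{n-r})$, when $j$ is the endpoint.
\end{itemize}
For $r>1$ this is harmless because $T^\fl(1_r)=0$. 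For $r=1$ these terms are nonzero, and the whole $D_1$ comparison consists of showing that all cuts sum to $D_1X^\fm_j(n)=\log^\fl 2\ot X^\fm_j(n-1)-2\delta_{j=1}\log^\fl 2\ot T^\fm(1_{n-1})$, reading $X^\fm_n(n-1)$ as $T^\fm(1_{n-1})$. This is what the derivation property and the induction hypothesis give for $D_1$ of the right-hand side. The two-sign terms cancel only by telescoping across initial, interior and final cuts, which is the paper's eleven-term list. Your plan does not address this.

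Conversely, the Vandermonde-type identity you anticipate never arises. For odd $r>1$, only the $s=n-r$ term of the right-hand side contributes, giving $\delta_{j\le r}X^\fl_j(r)\ot T^\fm(1_{n-r})$ directly from the induction hypothesis. On the cut side, the interior and final cuts through position $j$ split by path composition into $\big(X^\fl_i(r)-X^\fl_{r+1-i}(r)\big)\ot T^\fm(1_{n-r})$. This vanishes by path reversal in odd weight, leaving only the initial cut. Your Step~3 is correct, and rightly restricts the first case of the Lie formula to $n\ge 2$.
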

\begin{proof}
The proof goes in exactly the same way as that of Lemma~\ref{lem:T1n}. We first show that
\begin{align}\label{equ:DiffXReg}
X^\reg_j(n)=\frac{1}{n!}\log^n 2+\sum_{s=0}^{n-j} (-1)^{s+n-j} 2 \binom{n-1-s}{j-1}\zeta (\ol{n-s}) \frac{(\log 2)^{s}}{s!}.
\end{align}
Let $\eps$ be a small positive number. Then, by duality substitution $t\to (1-t)/(1+t)$
\begin{align*}
a_{j,n}^{(\eps)}:=\int_0^{1-\eps} \om_{-1}^{j-1}\om_1\om_{-1}^{n-j}
=&\,  \int_{\eps/(2-\eps)}^1 \om_0^{n-j}(\om_0+2\tc)\om_0^{j-1} \\
=&\,  \int_{\eps/(2-\eps)}^1 \om_0^n+ 2\int_{\eps/(2-\eps)}^1 \om_0^{n-j} \tc \om_0^{j-1}\\
=&\,\frac{1}{n!}(\log 2-\log \eps)^n+ 2\int_{\eps/(2-\eps)}^1 \om_0^{n-j} \tc \om_0^{j-1}+O(\eps \log^{n-1} \eps).
\end{align*}
This readily yields \eqref{equ:DiffXReg} when $j=n$, namely, $a_{n,n}(T)=\frac{1}{n!}(\log 2+T)^n+ 2 \zeta^\fl (\ol{n})$
yielding $a_{n,n}^\reg=\frac{1}{n!}\log^n 2+ 2 \zeta (\ol{n})$.

Assume $j\ge 2$. Then, by the shuffle product property of iterated integrals and an induction on $n$, we obtain
\begin{align*}
(j-1) a_{j,n}=&\,
\int_0^{1-\eps}\om_{-1} \int_0^{1-\eps}\om_{-1}^{j-2}\om_1\om_{-1}^{n-j}
 - (n-j+1) \int_0^{1-\eps}\om_{-1}^{j-2}\om_1\om_{-1}^{n-j+1}  \\
 =&\, a_{j-1,n-1}( \log 2-\log \eps) -(n-j+1) a_{j-1,n}+O(\eps \log^{n-1} \eps).
\end{align*}
Hence, the regularization of $a_{j,n}$'s satisfy the recursive relation
\begin{align*}
(j-1)a_{j,n}^\reg=&\,a_{j-1,n-1}\log 2  -(n-j+1) a_{j-1,n}^\reg.
\end{align*}
This leads to \eqref{equ:DiffXReg} by an downward induction starting with  $a_{n,n}^\reg=\frac{1}{n!}\log^n 2+ 2 \zeta
(\ol{n})$.

We now denote by $R^\fm_j(n)$ the right-hand side of \eqref{equ:DiffXT}. Then it is easy to see that for all odd $r>1$ and
$r<n$
\begin{equation}\label{equ:DrRfm1}
D_r R^\fm_j(n)=
X^\fl_j(r) \ot T^\fm(1_{n-r}).
\end{equation}
Moreover, easy computation leads to
\begin{align}
D_1 R^\fm_j(n)=&\,  -2\delta_{j=1} \log^\fl 2 \ot T^\fm(1_{n-1})+T^\fl(1)\ot X^\fm_j(n-1). \label{equ:DrRfm2}
\end{align}

Now we prove \eqref{equ:DiffXT} by induction on $n$. If $n=1$ then $j=1$ and \eqref{equ:DiffXT} is clear since
$\zeta^\fm(\ol{1})=-\log^\fm 2=X^\fm_1(1)=I^\fm(0;-1;1)$.
We assume $n\ge 2$, \eqref{equ:DiffXT} holds if we replace $n$ by any smaller number. Let's consider all the possible cuts of
$D_rX^\fm_j(n)$ using the same picture in the proof of Lemma~\ref{lem:T1n} and show  $D_r\big(X^\fm_j(n)-R^\fm(1_n) \big)=0$.
We need to consider whether $\eta_j$ lies within the quotient sequence in the first factor of the cuts in the proceeding
picture. Hence, by setting
\begin{align*}
X_{a,b}^\fm(n):=&\, \sum \eta_1\cdots\widehat{\eta_a}\cdots\widehat{\eta_b}\cdots\eta_n I^\fm\big(0;\{\eta_i\}_{i=1}^n;1\big)
\end{align*}
for all $1\le a<b\le n$, we have
\begin{align*}
\ncic{1}{}\text{(i)}=&\, \gd_{j\le r}   X_j^\fl(r) \ot T^\fm(1_{n-r}),\\
\ncic{1}{}\text{(ii)}=&\,   \gd_{j= r+1} T^\fl(1_r) \ot T^\fm(1_{n-r}),\\
\ncic{1}{}\text{(iii)}=&\,  \gd_{j\ge r+2}  T^\fl(1_r) \ot X_{1,j-r}^\fm(n-r) ,\\
\ncic{2}{}\text{(i)}=&\, \gd_{j<n-r-1} T^\fl(1_r) \ot \Big( X^\fm_{j,n-r}(n-r)-X^\fm_{j,j+1}(n-r)\Big),\\
\ncic{2}{}\text{(ii)}=&\, \gd_{j<n-r}  T^\fl(1_r) \ot \Big( X^\fm_{j,j+1}(n-r)-T^\fm(1_{n-r})\Big),\\
\ncic{2}{}\text{(iii)}=&\,0 \quad   (\text{for $k<j\le k+r$}),\\
\ncic{2}{}\text{(iv)}=&\, \gd_{j\ge r+2} T^\fl(1_r) \ot \Big(T^\fm(1_{n-r})- X^\fm_{j-r-1,j-r}(n-r)\Big),\\
\ncic{2}{}\text{(v)}=&\, \gd_{j>r+2}   T^\fl(1_r) \ot \Big( X_{j-r-1,j-r}^\fm(n-r)- X_{1,j-r}^\fm(n-r)\Big),\\
\ncic{3}{}\text{(i)}=&\, \gd_{j<n-r}  T^\fl(1_r) \ot \Big( X^\fm_j(n-r)-X^\fm_{j,n-r}(n-r)\Big),\\
\ncic{3}{}\text{(ii)}=&\,  \gd_{j=n-r} T^\fl(1_r) \ot \Big( X^\fm_{n-r}(n-r)-T^\fm(1_{n-r})\Big),\\
\ncic{3}{}\text{(iii)}=&\, 0 \quad  (\text{for $n-r<j$}).
\end{align*}
Here, we can divide the above 11 terms in three groups corresponding to the three different ways of cutting depending on the position of $j$. Replacing $\gd_{j>r+2}$ by $\gd_{j\ge r+2}$ in $\ncic{2}{}\text{(v)}${}\!, we finally
obtain
\begin{align*}
D_r X^\fm_j(n)
=&\, \left\{
   \begin{array}{ll}
  \gd_{j=1} (X_1^\fl(1)-T^\fl(1)) \ot T^\fm(1_{n-1})+T^\fl(1) \ot  X^\fm_j(n-1) , & \hbox{if $r=1$;} \\
   \gd_{j\le r } X_j^\fl(r) \ot T^\fm(1_{n-r}) , & \hbox{if $r> 1$,}
   \end{array}
 \right. \\
 =&\, D_r R^\fm_j(n)
\end{align*}
by \eqref{equ:DrRfm1} and \eqref{equ:DrRfm2}. By Theorem~\ref{thm:Glanois}, our theorem is now proved.
\end{proof}

We now lift \eqref{equ:Vjd-non-Motivic} to the motivic version by applying the descent theory.

\begin{thm}\label{thm:UnramMMVfamily3}
Let $d\in\N$ be even. Then $V^\fm_j(d)$  are all unramified for $j=\emptyset, 2,3,\dots,d-1$. More precisely, for all $1\le
j\le d$
\begin{align}\label{equ:Vjd-Motivic}
V_j^\fm(d)=&\,T^\fm(d+1)+2\zeta^\fm(\ol{d-j+1},\ol{j})-2\zeta^\fm(d-j+1,\ol{j}).
\end{align}
Further, $V^\fm_j(d)+2(\delta_{j=1}-\delta_{j=d}) T^\fm(d) \log^\fm 2$ is unramified for all $1\le j\le d.$
\end{thm}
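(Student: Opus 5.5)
The plan is to prove \eqref{equ:Vjd-Motivic} by the descent criterion of Theorem~\ref{thm:Glanois}, applied to the difference
\[
\Delta_j^\fm(d):=V_j^\fm(d)-T^\fm(d+1)-2\zeta^\fm(\ol{d-j+1},\ol{j})+2\zeta^\fm(d-j+1,\ol{j}).
\]
We will show $D_r\Delta_j^\fm(d)=0$ for every odd $r<d+1$. Theorem~\ref{thm:Glanois} then gives $\Delta_j^\fm(d)=c\,\zeta^\fm(d+1)$ for some $c\in\Q$. Applying the period map $\dch$ together with the analytic identity \eqref{equ:Vjd-non-Motivic} of Theorem~\ref{thm:UnramMMVfamily3ht1} forces $c=0$. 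The unramifiedness statements will then follow, since \eqref{equ:parityRed1}, \eqref{equ:parityRed2} and the computation of the ramified $\zeta^\fm(\ol1)$-parts in the proof of Theorem~\ref{thm:UnramMMVfamily3ht1} carry over verbatim to the motivic level. For $V_\emptyset^\fm(d)=T^\fm(1_{d-1},2)$ the same scheme applies, using the motivic duality $V_\emptyset^\fm(d)=T^\fm(d+1)$; this duality is motivic by \cite[Appendix A.3]{Glanois2015}.

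The right-hand side is easy. We have $D_rT^\fm(d+1)=0$ for $r<d+1$, and Corollary~\ref{cor:DrOnDblEulerSums} gives the $D_r$ of the double Euler sum combination explicitly. Each such term has the shape (Lie depth-one value)$\,\ot\,$(depth-one value).

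The main work is computing $D_rV_j^\fm(d)$ directly from
\[
V_j^\fm(d)=\sum_{\eta_i=\pm1}\eta_1\cdots\widehat{\eta_j}\cdots\eta_d\, I^\fm(0;\eta_1,\dots,\eta_d,0;1).
\]
We mimic the cut analysis of Lemmas~\ref{lem:T1n} and \ref{lem:Xjn}. The cuts come in three kinds:
\begin{itemize}
\item cuts lying entirely inside the $\eta$-string;
\item cuts starting at the initial $0$;
\item cuts ending in the final $0$ or at $1$.
\end{itemize}
For cuts inside the string, or from the start, we change variables $\eta_i\to\eta_i\eta_{\rm end}$ and use path reversal. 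Each such cut then becomes $T^\fl(1_r)$ or $X^\fl_\bullet(r)$ tensored with a quotient of the form $X^\fm_\bullet(d-r,\dots)$ or $V$-type values of smaller length. By Lemma~\ref{lem:T1n}, $T^\fl(1_r)=0$ for $r>1$, so for $r>1$ only cuts containing the distinguished unsigned position $\eta_j$ survive. Their left factors are $X^\fl_{\bullet}(r)=\pm2\binom{r-1}{\bullet-1}\zeta^\fl(\ol r)$ by Lemma~\ref{lem:Xjn}.

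The cuts touching the final $0$ produce $I^\fl(\eta_{k};\dots,\eta_d,0;1)$-type factors. After the variable change these become $\zeta^\fl(r)$ or $T^\fl(r)$-type values, and the right factors become depth-one values $\zeta^\fm(\ol{d+1-r})$ or $\zeta^\fm(d+1-r)$. We must match these two contributions against the two binomial terms $\binom{r-1}{d-j}$ and $\binom{r-1}{j-1}$ of Corollary~\ref{cor:DrOnDblEulerSums}.

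For $r=1$, the telescoping argument of Lemma~\ref{lem:T1n} should leave only $\log^\fl2\ot(\cdot)$ terms. These terms should match the formula for $D_1$ in Corollary~\ref{cor:DrOnDblEulerSums}, which is $(\delta_{j=1}-\delta_{j=d})\log^\fl2\ot T^\fm(d)$ up to sign. We can confirm this inductively in $d$ if the quotient depth-$(d-1)$ terms require it.

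We expect the main obstacle to be the bookkeeping of the cuts that contain the position $j$ and end at the final $0$ or at $1$. Here the sign pattern is not symmetric, and one must verify that the binomial coefficients coming from Lemma~\ref{lem:Xjn} reproduce exactly $\binom{r-1}{d-j}$ and $\binom{r-1}{j-1}$, with the correct $\zeta$ versus $\ol\zeta$ factors on each side. To get this right, we would first handle these terms by reducing them to $X^\fl_j(r)$ via path reversal, and then compare coefficient by coefficient of $\zeta^\fl(\ol r)\ot\zeta^\fm(\ol{d+1-r})$ and $\zeta^\fl(\ol r)\ot\zeta^\fm(d+1-r)$. As a fallback, we would use Lemma~\ref{lem:D1S1bfl}\ref{Murakami-lemma9} to kill every cut whose left factor is a full sum over an unweighted $\eta$.
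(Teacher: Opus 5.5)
Your overall route is the paper's: apply Theorem~\ref{thm:Glanois} to the difference, compute $D_rV_j^\fm(d)$ cut by cut with Lemmas~\ref{lem:T1n} and~\ref{lem:Xjn}, compare with Corollary~\ref{cor:DrOnDblEulerSums}, and fix the constant with $\dch$ and \eqref{equ:Vjd-non-Motivic}. The gap is at the step you single out as the main obstacle, and the remedy you propose there does not work.

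Let $r>1$, $r>d+1-j$ and $k=d-r+1$. The cut $I^\fl(\eta_k;\eta_{k+1},\dots,\eta_d,0;1)$ contains both $\eta_j$ and the final $0$. Substitute $\eta_i\to\eta_i\eta_k$ for $k<i\le d$ and rescale by $\eta_k$. The cut becomes $I^\fl(1;\eta_{k+1},\dots,\eta_d,0;\eta_k)$, which vanishes unless $\eta_k=-1$. Splitting $I^\fl(1;\dots;-1)$ at $0$ then gives, up to signs, two terms:
\begin{itemize}
\item $V^\fl_{j'}(r-1)$ with $j'=j-k$;
\item the companion $\sum\big(\prod_{k<i\le d,\ i\ne j}\eta_i\big)I^\fl(0;0,\eta_d,\dots,\eta_{k+1};1)$, which has a leading $0$.
\end{itemize}
Both contain the letter $0$, so no path reversal turns them into $X^\fl_\bullet(r)$. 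They are height-one values of even depth $r-1$, exactly the kind of object the theorem is about.

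The missing idea is to set up the whole argument as an induction on the even integer $d$. You then feed in the Lie image of the statement at depth $r-1<d$, namely \eqref{equ:diffInLie}, which gives $V^\fl_{j'}(r-1)=-(-1)^{j'}\binom{r-1}{j'}T^\fl(r)$. The leading-zero companion needs a similar evaluation; for $r=3$ it is $t_1^\fl(1,1)$ or $S_1^\fl(1,1)$, handled by Lemma~\ref{lem:S_a(b+1,c+1)}. The right factor is $\tfrac12\big(X^\fm_k(k)-T^\fm(1_k)\big)=\zeta^\fm(\ol{d+1-r})$ by Lemma~\ref{lem:Xjn}. This is where the paper invokes ``the induction assumption'' together with \eqref{equ:diffInLie} and Lemma~\ref{lem:Xjn}.

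This cut cannot be bypassed. The cut ending at the final $0$ contributes only $-2(-1)^j\binom{r-1}{d-j}\zeta^\fl(\ol r)\ot T^\fm(d+1-r)$. The $\binom{r-1}{d-j}$-part of Corollary~\ref{cor:DrOnDblEulerSums} (doubled) is recovered only after adding $2(-1)^j\binom{r-1}{d-j}T^\fl(r)\ot\zeta^\fm(\ol{d+1-r})$ from the cut through the final $0$. So the induction on $d$ has to enter the $r>1$ computation of left factors. It is not just a fallback for the $r=1$ quotients, which is the only place your proposal mentions it.

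A smaller correction in the same spirit concerns interior cuts that contain $\eta_j$. These also vanish for $r>1$: splitting at $0$ gives $X^\fl_a(r)-X^\fl_{r+1-a}(r)=0$, where $a$ is the position of $\eta_j$ inside the cut. So containing $\eta_j$ is necessary for a cut to survive, but not sufficient.
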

\begin{proof}
We must verify the images of the coactions $D_r$ on both sides of \eqref{equ:Vjd-Motivic} are the same for all positive odd
$r<d$. Hence, we need to consider the following cuts for $V^\fm_j(d)$.

\begin{center}
\begin{tikzpicture}[scale=0.9]
\node (A0) at (0.05,0) {$0;\eta_1,\ldots\, , \ldots\, ,\eta_r,\eta_{r+1},\dots,\eta_k, \ldots\, , \ldots\, ,\eta_{k+r+1},
\dots,\eta_d,0;1$};
\node (A11) at (-3.5,-0.6) {$\cic{1}\cic{2}\cic{3}$};
\node (A22) at (1.3,0.45) {$\cic{4}\cic{5}\cic{6}\cic{7}\cic{8}$};
\node (A33) at (2,-0.55) {$\cic{9}\cic{\hskip-1.2pt1\!0}\cic{\hskip-1.2pt1\!1}$};
\node (A44) at (2.8,-0.95)  {$\cic{\hskip-1.2pt1\!2}\cic{\hskip-1.3pt1\!3}$};
\draw (-5.1,-0.15) to (-5.1,-0.3) to  (-1.6,-0.3) to (-1.6,-0.15);
\draw (0,0.15) to (0,0.3) to  (2.55,0.3) to (2.55,0.15);
\draw (0,-0.15) to (0,-0.3) to  (4.95,-0.3) to (4.95,-0.15);
\draw (-0.07,-0.15) to (-0.07,-0.725)   to (5.2,-0.725) to (5.2,-0.15);
\node (A) at (0,-1.6) {Possible cuts of $D_r V^\fm_j(d)$ ($1\le j\le d$) for odd $r$.};
\end{tikzpicture}
\end{center}
Then,
\begin{align*}
\ncic{1}=&\, \gd_{r\ge j}\, \sum \eta_1\cdots \widehat{\eta_j}\cdots\eta_d
I^\fl\big(0;\eta_1,\dots,\eta_j,\dots,\eta_r;\eta_{r+1}\big)\ot I^\fm\big(0;\{\eta_i\}_{i=r+1}^d,0;1\big) \\
=&\, \gd_{r\ge j}\, \sum \eta_1\cdots \widehat{\eta_j}\cdots\eta_r I^\fl\big(0;\eta_1,\dots,\eta_j,\dots,\eta_r;1\big)\ot
\eta_{r+1}\cdots \eta_d I^\fm\big(0;\{\eta_i\}_{i=r+1}^d,0;1\big) \\
:=&\, \gd_{r\ge j}\, X^\fl_j(r) \ot V_\emptyset^\fm(d-r)
=\left\{
   \begin{array}{ll}
     (-1)^j 2\binom{r-1}{j-1}\zeta^\fl(\ol{r}) \ot T^\fm(d-r), & \hbox{if $r>1$;} \\
    -\log^\fl 2\ot T^\fm(d), & \hbox{if $r=j=1$,}
   \end{array}
 \right.
\end{align*}
by Lemma~\eqref{lem:Xjn}. In $\ncic{1}${}\! , when we changed the variables $\eta_1\to \eta_1\eta_{r+1},\dots,\eta_r\to
\eta_r\eta_{r+1}$ we only produced $\eta_{r+1}^{r-1}=1$ as the extra factor in the front because $\eta_j$ is missing. But for
$\ncic{2}${}\! next, we actually produced
the extra factor $\eta_{r+1}^r=\eta_j^r=\eta_j$:
\begin{align*}
\ncic{2}=&\,\gd_{r=j-1} \sum \eta_1\cdots \widehat{\eta_j}\cdots\eta_d I^\fl\big(0;\eta_1,\dots,\eta_{j-1};\eta_j\big)\ot
I^\fm\big(0;\{\eta_j\}_{i=j}^d,0;1\big) \\
=&\, \gd_{r=j-1}\, \sum \eta_1\cdots \eta_{j-1} I^\fl\big(0;\eta_1,\dots,\eta_{j-1};\eta_j\big)\ot \eta_j\cdots \eta_d
I^\fm\big(0;\{\eta_i\}_{i=j}^d,0;1\big) \\
=&\, \gd_{r=j-1}\, T^\fl(1_r) \ot V^\fm_\emptyset(d-r)
=\left\{
   \begin{array}{ll}
  0, & \hbox{if $r>1$;} \\
  \log^\fl 2\ot T^\fm(d), & \hbox{if $r=1,j=2$.}
   \end{array}
 \right.
\end{align*}

Similarly,
\begin{align*}
\ncic{3}=&\, \gd_{r\le j-2}\,  \sum \eta_1\cdots \eta_{r+1}\cdots\widehat{\eta_j}\cdots\eta_d
I^\fl\big(0;\eta_1,\dots,\eta_r;\eta_{r+1}\big)\ot I^\fm\big(0;\{\eta_i\}_{i=r+1}^d,0;1\big) \\
=&\, \gd_{r\le j-2}\, \sum \eta_1\cdots \eta_r I^\fl\big(0;\eta_1,\dots,\eta_r;1\big)\ot
\eta_{r+2}\cdots\widehat{\eta_j}\cdots \eta_d I^\fm\big(0;\{\eta_i\}_{i=r+1}^d,0;1\big) \\
:=&\, \gd_{r\le j-2}\, T^\fl (1_r) \ot V_{1,j-r}^\fm(d-r)
=\left\{
   \begin{array}{ll}
  0, & \hbox{if $r>1$;} \\
  \log^\fl 2\ot  V_{1,j-1}^\fm(d-1), & \hbox{if $r=1,j\ge 3$,}
   \end{array}
 \right.
\end{align*}
where, for all $1\le a<b\le n$ we set
\begin{align*}
V_{a,b}^\fm(n):=&\, \sum \eta_1\cdots\widehat{\eta_a}\cdots\widehat{\eta_b}\cdots\eta_n
I^\fm\big(0;\{\eta_i\}_{i=1}^n,0;1\big).
\end{align*}
Next, taking $j<k$, $k=j$ and so on in the cutting picture, we get using
Cor.~\ref{equ:diffInLie}, Lemma~\ref{lem:Xjn}  and the induction assumption:
\begin{align*}
\ncic{4}=&\,\left\{
   \begin{array}{ll}
  0, & \hbox{if $r>1$;} \\
  \log^\fl 2\ot  \Big(V^\fm_{j,d-1}(d-1)-V^\fm_{j,j+1}(d-1)\Big)  , & \hbox{if $r=1,j\le d-2$,}
   \end{array}
 \right.\\
\ncic{5}=&\,\left\{
   \begin{array}{ll}
  0, & \hbox{if $r>1$;} \\
  \log^\fl 2\ot  \Big(V^\fm_{j,j+1}(d-1)-T^\fm(d)\Big), & \hbox{if $r=1,j\le d-2$,}
   \end{array}
 \right.\\
\ncic{6}=&\, 0 \quad (\text{if $r\ge j-k$}),\\
\ncic{7}=&\,\left\{
   \begin{array}{ll}
  0, & \hbox{if $r>1$;} \\
  \log^\fl 2\ot  \Big( T^\fm(d)-V^\fm_{j-2,j-1}(d-1)\Big), & \hbox{if $r=1$,}
   \end{array}
 \right.\\
\ncic{8}=&\,\left\{
   \begin{array}{ll}
  0, & \hbox{if $r>1$;} \\
  \log^\fl 2\ot \Big(V^\fm_{j-2,j-1}(d-1)-V^\fm_{1,j-1}(d-1)\Big)   , & \hbox{if $r=1,j\ge 4$,}
   \end{array}
 \right.\\
\ncic{9}=&\,\left\{
   \begin{array}{ll}
  0, & \hbox{if $r>1$;} \\
  -\log^\fl 2\ot  T^\fm(d), & \hbox{if $r=1,j=d-1$,}
   \end{array}
 \right.\\
\ncic{\hskip-1.2pt1\!0}=&\, \left\{
   \begin{array}{ll}
  - (-1)^j  2\binom{r-1}{d-j} \zeta(\ol{r}) \ot T^\fm(d-r) , & \hbox{if $r>1$;} \\
  \log^\fl 2\ot  T^\fm(d), & \hbox{if $r=1,j=d$,}
   \end{array}
 \right.\\
\ncic{\hskip-1.2pt1\!1}=&\,\left\{
   \begin{array}{ll}
   0, & \hbox{if $r>1$;} \\
  -\log^\fl 2\ot  V^\fm_{j,d-1}(d-1), & \hbox{if $r=1,j\le d-2$,}
   \end{array}
 \right. \\
\ncic{\hskip-1.2pt1\!2}=&\,0  \quad (\text{if $r\le d+1-j$}),\\
\ncic{\hskip-1.2pt1\!3}=&\,\left\{
   \begin{array}{ll}
   -\gd_{r>  d+1-j}\,(-1)^j  \binom{r-1}{d-j} T^\fl(r)\ot \big(-2 \zeta^\fm(\ol{d-r+1})\big), & \hbox{if $r>1$;} \\
  0, & \hbox{if $r=1$.}
   \end{array}
 \right.
\end{align*}

If $r>1$ then putting all the nonzero cuts $\ncic{1}${}\! , $\ncic{8}${}\ and $\ncic{\hskip-1.2pt1\!1}${}\ together, we arrive
at
\begin{align*}
D_r V_j^\fm(d)=&\,  (-1)^j 2\binom{r-1}{j-1}\zeta^\fl(\ol{r}) \ot T^\fm(d-r)
   - (-1)^j  2\binom{r-1}{d-j} \zeta(\ol{r}) \ot T^\fm(d-r)  \\
  &\,  +(-1)^j 2 \binom{r-1}{d-j} T^\fl(r)\ot   \zeta^\fm(\ol{d-r+1})\\
  =&\,   D_r \Big(T^\fm(d+1)+2\zeta^\fm(\ol{d-j+1},\ol{j})-2\zeta^\fm(d-j+1,\ol{j}) \Big)
\end{align*}
by Cor.~\ref{cor:DrOnDblEulerSums}. If $r=1$ then after many cancellations we see that
\begin{align*}
D_1 V_j^\fm(d)=&\,2\Big(\delta_{j=d}- \delta_{j=1}\Big)\log^\fl 2\ot T^\fm(d)\\
=&\, D_1 \Big(T^\fm(d+1)+2\zeta^\fm(\ol{d-j+1},\ol{j})-2\zeta^\fm(d-j+1,\ol{j}) \Big).
\end{align*}
This completes the proof of the theorem by the descent criterion in Theorem~\ref{thm:Glanois} in view of
Theorem~\ref{thm:UnramMMVfamily3ht1}.
\end{proof}

\begin{exa}
If we use the parity reduction for double Euler sums (cf. \cite[Prop.~4.1]{XuZhao2023Aug}) we can derive the exact expression
of $V_j(d)$ ($1\le j<d$) in terms of (products of) Riemann zeta values precisely. For example, we have
\begin{equation*}
64V_5(8)=64 M(1_7,2; \{-1,1\}_2,\{1,-1\}_2)=7154\zeta(9)-4075\zeta(2)\zeta(7)- 257 \zeta(4)\zeta(5) .
\end{equation*}
\end{exa}

\subsection{The height one family $V^\fm_0(d)$}
We first remark that from numerical computation the following conjecture must be true.

\begin{conj}\label{conj:EulerSharp212bar}
For all positive integers $d\ge 3$ we have
\begin{equation*}
    \zeta^\sharp(2, 1_{d-3}, \ol{2})=  \frac12 \zeta(d+1)-2\sum_{m+n=d+1,2|n} \zeta(m,n).
\end{equation*}
\end{conj}

\begin{thm}\label{thm:assumConj}
Assuming Conjecture~\ref{conj:EulerSharp212bar}, we have
\begin{equation}\label{equ:V0d}
V_0(d)= \frac12\zeta^\star(1,d)- \frac{1}4 \zeta(d+1)+\sum_{m+n=d+1,2|n} \zeta(m,n).
\end{equation}
\end{thm}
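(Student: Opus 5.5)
The plan is to compute $V_0(d)$ analytically, in the same way as in the proof of Theorem~\ref{thm:UnramMMVfamily3ht1}, and reduce everything to Euler $\sharp$-sums to which \eqref{equ:1nBar2} and Conjecture~\ref{conj:EulerSharp212bar} apply. By \eqref{equ:MMVint} with $\bfgs=(\{-1\}_{d-1},1)$ we have
\begin{equation*}
V_0(d)=\int_0^1 \om_{-1}\,\om_1^{d-2}\,\om_{-1}\,\om_0 .
\end{equation*}
I will apply the duality $t\to(1-t)/(1+t)$ exactly as in that proof. It reverses the word and swaps $\om_0\leftrightarrow\om_{-1}$, and it sends $\om_1\mapsto \om_0+\om_1-\om_{-1}=\ta+2\tc$. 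This gives
\begin{equation*}
V_0(d)=\int_0^1 (\tb-\tc)\,\ta\,(\ta+2\tc)^{d-2}\,\ta .
\end{equation*}
The integral converges, since the word ends in $\ta$ and starts with $\tb-\tc$, so no regularization is needed. The first key step is to expand $(\ta+2\tc)^{d-2}$. Each choice of positions for the letters $2\tc$ produces a factor $2^{\#}$, and adjacent $\ta$'s merge into larger arguments. Summing over all such choices is exactly the ``$\circ=$ `,' or $\oplus$'' summation with weight $2^{\dep}$ that defines $\zeta^\sharp$. I therefore expect the $\tb$-part and the $\tc$-part of the leading form each to assemble into a single $\sharp$-sum, up to a factor $\tfrac12$ coming from the leading letter carrying no factor $2$ and up to the overall sign conventions for $\tx_\eta$.

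Concretely, I expect that converting the words to Euler sums will give the following identifications. The signs $\eps_j$ come out from the ratios of consecutive $\eta$'s, as in the translation between \eqref{equ:MMVdefn} and \eqref{equ:MMVint}. The $\tb$-leading part should give $\tfrac12\zeta^\sharp(2,1_{d-3},\ol{2})$, possibly with the outer bar pattern shifted. The $-\tc$-leading part should give a multiple of $\zeta^\sharp(\ol{2},1_{d-3},\ol2)$, or a related sum whose first bar cancels. I will determine the exact index patterns by tracking signs on the first two letters. The boundary case in which every middle letter is $\ta$ contributes the depth-one terms $\zeta(d+1)$ and $\zeta(\ol{d+1})$.

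The second step is to evaluate these pieces. For the piece with a barred first entry, I will use \eqref{equ:1nBar2}; if the first entry is unbarred after the sign bookkeeping, I will instead use stuffle/shuffle to move one $\tc$, which reduces it to the case $\zeta^\sharp(\{1\}_{n-2},\ol{2})=-\zeta(n)$ together with a $\zeta^\star(1,d)$-type correction. The other piece, $\zeta^\sharp(2,1_{d-3},\ol2)$, will be replaced using Conjecture~\ref{conj:EulerSharp212bar} by $\tfrac12\zeta(d+1)-2\sum_{m+n=d+1,2|n}\zeta(m,n)$. After multiplying by the leading $\tfrac12$ with a sign, this produces the last two terms of \eqref{equ:V0d}. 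The remaining terms, namely $\zeta(d+1)$-type terms and any $\zeta(1,d)$, should combine via $\zeta^\star(1,d)=\zeta(1,d)+\zeta(d+1)$ into $\tfrac12\zeta^\star(1,d)$. Finally I will check numerically, for $d=4$, that the constants $\tfrac12$ and $-\tfrac14$ agree.

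The main obstacle is the bookkeeping in the first step: matching the expanded words exactly with $\sharp$-sums. In particular, the first letter is $\tb-\tc$ rather than $2\tc$, and the barred/unbarred pattern of the middle $1$'s is fixed by products of consecutive signs. A pattern such as $(2,1_{d-3},\ol2)$ may instead come out as $(\ol2,\ol1,\dots)$. If that happens, I will first apply the change of variables $\eta_j\to\eta_j\eta_{j+1}$ used repeatedly in Section~\ref{sec:htOneFamily} to normalize the signs. If the leftover term is not directly a $\sharp$-sum, I will fall back on the depth-two parity reduction \cite[Prop.~4.1]{XuZhao2023Aug} to evaluate the remaining double Euler sums.
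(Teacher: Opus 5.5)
Your first step matches the paper's: after duality, $V_0(d)=\int_0^1(\tb-\tc)\ta(\ta+2\tc)^{d-2}\ta$. The bookkeeping you worry about closes exactly, but the two pieces come out the opposite way round from your guess. In a $\sharp$-sum, each block starts with $\tx_\eta$, where $\eta$ is the product of all signs from that block to the end. For $(2,1_{d-3},\ol2)$ this product is always $-1$. For $(\ol2,1_{d-3},\ol2)$ it is $+1$ for the first block, since the two bars cancel, and $-1$ for every later block. Hence, for $d\ge3$,
\[
V_0(d)=\tfrac12\zeta^\sharp(\ol2,1_{d-3},\ol2)-\tfrac12\zeta^\sharp(2,1_{d-3},\ol2),
\]
with no leftover depth-one or $\zeta(1,d)$ terms to recombine. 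Conjecture~\ref{conj:EulerSharp212bar} applies to the $-\tc$-leading piece, which has coefficient $-\tfrac12$. It yields $-\tfrac14\zeta(d+1)+\sum_{m+n=d+1,2|n}\zeta(m,n)$.

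The genuine gap is the other piece, $\tfrac12\zeta^\sharp(\ol2,1_{d-3},\ol2)$. You plan to evaluate the barred-first $\sharp$-sum with \eqref{equ:1nBar2}, but that identity concerns $\zeta^\sharp(1_{n-2},\ol2)=\int_0^1 2\tc(\ta+2\tc)^{n-2}\ta$. That is a word starting with $\tc$ whose entries before the last are all $1$. It says nothing about $\int_0^1 2\tb\ta(\ta+2\tc)^{d-2}\ta=\zeta^\sharp(\ol2,1_{d-3},\ol2)$. Your fallback, moving one $\tc$ by stuffle or shuffle ``up to a $\zeta^\star(1,d)$-type correction'', is not an argument. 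The depth-two parity reduction cannot help either, because this $\sharp$-sum involves Euler sums of every depth up to $d-1$.

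What is actually needed is the identity $\zeta^\sharp(\ol2,1_{d-3},\ol2)=\zeta^\star(1,d)$ for $d\ge3$. This is a nontrivial result of Linebarger and Zhao \cite[Theorem~4.8]{LinebargerZh2015}, and it is exactly where the term $\tfrac12\zeta^\star(1,d)$ in \eqref{equ:V0d} comes from; the paper simply cites it. With that input the proof is immediate; without it, or an independent proof of it, your argument does not close.

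Finally, the case $d=2$ must be checked by hand, since $1_{d-3}$ and the conjecture are unavailable there. Both sides equal $\zeta(3)-\zeta(\ol3)=\tfrac74\zeta(3)$.
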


\begin{proof}
By definition,
\begin{equation}\label{equ:Vdd}
V_0(d)=\int_0^1\om_{-1} \om_1^{d-2} \om_{-1} \om_0=\int_0^1 (\tb-\tc) \ta (\ta+2\tc)^{d-2}\ta
\end{equation}
by duality, where
\begin{equation*}
  \ta=\frac{dt}{t},\quad \tb=\frac{dt}{1-t},\quad \ta=\frac{dt}{-1-t}.
\end{equation*}
If $d=2$, we see easily that $V_2(2)=\zeta(3)-\zeta(\ol{3})=\frac{7}4 \zeta(3)$ while
the right-hand side of \eqref{equ:Vdd} is
\begin{equation*}
\frac{1}4 \zeta(3)+\frac32\zeta(1,2)=\frac{7}4 \zeta(3).
\end{equation*}
Thus the theorem holds when $d=2.$

For general $d\ge 3,$ in terms of Glanois's Euler $\sharp$-sums defined in \cite{Glanois2015}, the right-hand side of
\eqref{equ:V0d} can be written as
\begin{equation*}
\frac12\Big(\zeta^\sharp(\ol{2}, 1_{d-3}, \ol{2})-\zeta^\sharp(2, 1_{d-3}, \ol{2})\Big).
\end{equation*}
But by \cite[Theorem~4.8]{LinebargerZh2015}, when $d\ge 3$
\begin{equation*}
    \zeta^\sharp(\ol{2}, 1_{d-3}, \ol{2})=\zeta^\star(1,d).
\end{equation*}
Hence, \eqref{equ:V0d} holds if we assume Conjecture~\ref{conj:EulerSharp212bar}.
\end{proof}

We can now lift Theorem~\ref{thm:assumConj} to is motivic version.
First, we need some preliminary lemmas.

\begin{lem}\label{lem:t111Unram}
For all odd positive integers $r\ge 3$,
$$t^\fl(1_{r})=\frac{-2^{r-1}}{r} t^\fl(r) \quad\text{and}\quad t_1^\fl(1_{r-1})=2^{r-2} t^\fl(r).$$
In particular,  $t^\fl(1_r)=(1/2-2^{r-1})\zeta^\fl(r)\in \calL_r$ and therefore $t_1^\fl(1_{r-1})\in \calL_r$.
\end{lem}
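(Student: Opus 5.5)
The plan is to follow the same pattern as Lemma~\ref{lem:T1n}: first show that $t^\fl(1_r)$ and $t^\fl_1(1_{r-1})$ are killed by every lower coaction, hence are rational multiples of $\zeta^\fl(r)$, and then fix the rational constant by an analytic computation. From \eqref{defn:MMtV} with $\bfeps=(-1)_r$ and $\eps_0=1$ we have
\begin{equation*}
t^\fm(1_r)=\sum_{\eta_j=\pm1}\eta_1\, I^\fm(0;\eta_1,\eta_2,\dots,\eta_r;1),\qquad
t^\fm_1(1_{r-1})=\sum_{\eta_j=\pm1}\eta_1\, I^\fm(0;0,\eta_1,\dots,\eta_{r-1};1).
\end{equation*}

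\textbf{Step 1: vanishing of the lower coactions.} For odd $s<r$ we compute $D_s$ on the Lie images via the cuts, exactly as in the picture for $D_rT^\fm(1_n)$.
\begin{itemize}
\item A cut not containing $\eta_1$ has only unsigned letters $\eta_j$ in its left factor. It therefore vanishes by Lemma~\ref{lem:D1S1bfl}\ref{Murakami-lemma9} after summing over an interior $\eta$.
\item A cut starting at $0$ and containing $\eta_1$ becomes, after the substitution $\eta_i\to\eta_i\eta_{s+1}$, a multiple of $t^\fl(1_s)\ot(\cdots)$. This lands in $\calL_s\ot\calH_{r-s}$ once the first claim is known in weight $s$ (induction on $r$; the case $s=1$ is $t^\fl(1)=\log^\fl 2$ and must cancel telescopically, as in Lemma~\ref{lem:T1n}).
\end{itemize}
For the Lie image we only need these terms to vanish modulo products in the second factor. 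Since $\calL^2$ is spanned modulo $\calA_{>0}^2\cdot\calA_{>0}^2$ and every $D_s$ with $s<r$ then kills the Lie element, the ``Moreover'' part of Theorem~\ref{thm:Glanois}, applied to a motivic lift, gives
\begin{equation*}
t^\fl(1_r)=c_r\,\zeta^\fl(r),\qquad t_1^\fl(1_{r-1})=c_r'\,\zeta^\fl(r)
\end{equation*}
for some $c_r,c_r'\in\Q$. The same cut analysis applies verbatim to $t^\fm_1(1_{r-1})$, whose extra leading $0$ only adds cuts $I^\fl(0;0,\eta_1,\dots;\eta_{k})$ that again vanish or reduce to the previous case.

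\textbf{Step 2: fixing the constants.} Here I would use regularized generating functions, since taking a logarithm linearizes modulo products. Write $\om_1=-d\log(1-t^2)$, and note that $\om_1^{r-1}$ iterated from $t$ to $1$ collapses to a power of $\log$. Then
\begin{equation*}
\sum_{a,b}t^\reg_a(1_{b})\,x^a y^{b}
\end{equation*}
becomes a shuffle-regularized Beta-type integral of the shape
\begin{equation*}
\int_0^1 t^{-x}(1-t^2)^{-y}\,\frac{2\,dt}{1-t^2},
\end{equation*}
with the endpoint divergence at $t=1$ removed by the regularization prescription of \cite[Prop.~13.3.8]{Zhao2016}. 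The substitution $u=t^2$ turns this into a ratio of Gamma functions. Taking $\log$ and expanding $\log\Gamma(1-z)=\gamma z+\sum_{k\ge2}\zeta(k)z^k/k$ expresses the depth-one (indecomposable) part of each coefficient explicitly via $\zeta(r)$ and $(1-2^{1-r})\zeta(r)$, the latter coming from the half-integer shift. The $x^0y^r$ coefficient should yield
\begin{equation*}
c_r=\frac{1}{2}-2^{r-1}=-\frac{2^{r-1}}{r}\cdot\frac{r(2^r-1)}{2^r},
\end{equation*}
using $t^\fl(r)=(1-2^{-r})\zeta^\fl(r)$, which follows from Lemma~\ref{lem:D1S1bfl}\ref{lemCase:T=tz,S=tz} together with $\tz(r)=2^{1-r}\zeta(r)$. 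The $x^1y^{r-1}$ coefficient should yield $c_r'$, giving $t_1^\fl(1_{r-1})=2^{r-2}t^\fl(r)$.

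As a cross-check for the second constant, the shuffle $\om_0\sha\om_{-1}\om_1^{r-2}$ vanishes modulo products, since $\zeta^\fl(1)=0$. It expresses $t_1^\fl(1_{r-1})$ through $t^\fl(1_r)$-like words with one $\om_0$ inserted, which the same generating function controls. The main obstacle I expect is Step~2: handling the regularization at $t=1$ correctly, so that spurious $\log 2$ and $\gamma$ terms cancel (they are products or of even weight and so die in $\calL_r$), and tracking the factor $2^{r}$ from the substitution $u=t^2$.
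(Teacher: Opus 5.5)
\textbf{Step 2 does not give a motivic statement, and Step 1 is misjustified.} Your Step~2 is a computation with real numbers. The regularized Beta integral gives an exact formula for $t^{\reg}_a(1_b)$, but ``take $\log$ and read off the indecomposable part'' is not an operation you can perform on real periods.

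Step~1 only tells you that $t^\fm(1_r)=c_r\zeta^\fm(r)+P^\fm$, with $P^\fm$ a combination of products and multiples of $\zeta^\fm(2)$. From $t^{\reg}(1_r)=c_r\zeta(r)+\dch(P^\fm)$ you cannot extract $c_r$ unless you know $P^\fm$ motivically, or know that $\zeta(r)$ is $\Q$-independent of products of lower-weight periods, which is open. To close this you would have to lift the whole exponential formula to an identity of motivic Euler sums: check all $D_s$ inductively, then fix the $\zeta^\fm(r)$-coefficient by the period, as in Lemma~\ref{lem:T1n}.

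The paper never leaves $\calL$. Let $\Sigma_k$ be the sum of $t^\fl$ over all placements of one entry $k$ among $r-k$ ones. The shuffle with $\om_0$ gives $t_1^\fl(1_{r-1})=-\Sigma_2$. The stuffle products $t_*(k)\,t_*(1_{r-k})=\Sigma_k+2\Sigma_{k+1}$ are motivic by Soud\`eres, so modulo products $\Sigma_k=-2\Sigma_{k+1}$ for $k\ge2$, hence $\Sigma_2=(-2)^{r-2}t^\fl(r)$.

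Step~1 is also not right as written. The ``Moreover'' part of Theorem~\ref{thm:Glanois} concerns elements of $\calH^2_w$ killed by every $D_s$. Your cuts actually give $D_s t^\fm(1_r)=t^\fl(1_s)\ot t^\fm(1_{r-s})$, which for $s=1$ is $\log^\fl 2\ot t^\fm(1_{r-1})\neq 0$; nothing telescopes (compare Lemma~\ref{lem:D1S1bfl}\ref{lemCase:D1MtV}). What you need instead is twofold:
\begin{itemize}
\item an element of $\calL^2_r$ with vanishing cobracket is a multiple of $\zeta^\fl(r)$;
\item $t^\fl(1_{2k})=t_1^\fl(1_{2k-1})=0$ in $\calL^2_{2k}$, which is not automatic since $\calL^2_{2k}\neq 0$.
\end{itemize}

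\textbf{The constant you announce is not what your method produces.} Expanding your integral (after regularization, $\int_t^1\om_1$ becomes $\log\frac{1-t^2}{2}$) gives
\[
1+\sum_{a\ge0,\,b\ge1}t^{\reg}_a(1_b)\,x^a y^b=\frac{2^{-y}\,\Gamma(\frac{1+x}{2})\,\Gamma(1+y)}{\Gamma(\frac{1+x}{2}+y)} .
\]
For odd $r\ge3$, the weight-$r$ part of its logarithm is $\frac{2^r-1}{r}\zeta(r)\big((\tfrac x2+y)^r-(\tfrac x2)^r\big)-\frac1r\zeta(r)\,y^r$. Formally this yields $t_1^\fl(1_{r-1})=\frac{2^r-1}{2}\zeta^\fl(r)=2^{r-2}t^\fl(r)$, as stated. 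But it also yields $t^\fl(1_r)=\frac{2^r-2}{r}\zeta^\fl(r)$; for instance $t^{\reg}(1,1,1)=\frac16\log^3 2-\zeta(2)\log 2+2\zeta(3)$.

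That value is neither $-\frac{2^{r-1}}{r}t^\fl(r)$ nor $(\frac12-2^{r-1})\zeta^\fl(r)$, and those two targets do not agree with each other either. With the factor $2^{\dep}$ in the definition, $t^\fl(r)=(2^r-1)\tz^\fl(r)=2(1-2^{-r})\zeta^\fl(r)$; you dropped the $2$. Hence $-\frac{2^{r-1}}{r}t^\fl(r)=-\frac{2^r-1}{r}\zeta^\fl(r)$. Your displayed reconciliation only balances because of an unexplained extra factor $r$.

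The paper's $k=1$ stuffle step does not rescue the stated constant. For the stuffle-regularized value it gives $r\,t_*^\fl(1_r)=-2\Sigma_2=+2^{r-1}t^\fl(r)$. Passing to the shuffle-regularized $t^\fm(1_r)$ adds $\frac{(-1)^r}{r}\zeta^\fl(r)$ (the usual $\Gamma(1+u)$ comparison), which again lands on $\frac{2^r-2}{r}\zeta^\fl(r)$.

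So what can be proved is the second identity, together with the fact that both $t^\fl(1_r)$ and $t_1^\fl(1_{r-1})$ lie in $\Q\,\zeta^\fl(r)\subset\calL_r$. Your ``should yield $c_r=\frac12-2^{r-1}$'' asserts the target rather than deriving it. The first displayed identity and the ``in particular'' formula for $t^\fl(1_r)$ are not correct as stated.
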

\begin{proof}
For any positive integer $k$, by the stuffle relation for multiple $t$-values, we clearly have
\begin{align*}
t_*(k)t_*(1_{r-1})=\sum_{i=1}^{r-1} t_*(1_{i-1},k,1_{r-i-1})
+\sum_{i=1}^{r-2} 2 t_*(1_{i-1},k+1,1_{r-i-2}).
\end{align*}
Since stuffle relations are motivic (see \cite{Souderes2010}), modulo products we get
\begin{align}\label{equ:t1rReduce}
\sum_{i=1}^{r-1} t^\fl(1_{i-1},k,1_{r-i-1})
=-2\sum_{i=1}^{r-2} t^\fl(1_{i-1},k+1,1_{r-i-2}).
\end{align}
Repeat this for $k=2,3,\dots, n$, we see that
\begin{align*}
t_1^\fl(1_{r-1})
=-\sum_{i=1}^{n-1} t_1^\fl(1_{i-1},2,1_{r-i-1})
=-(-2)^{r-2} t^\fl(r)=2^{r-2} t^\fl(r)
\end{align*}
since $r$ is odd.
Finally, taking $k=1$ in \eqref{equ:t1rReduce}
yields the lemma immediately.
\end{proof}

\begin{lem}\label{lem:Un=-zeta(n)}
    For all $n\in\N$, $U_n=-\zeta(n)$ if $n\ge 2$.
\end{lem}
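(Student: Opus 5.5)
The plan is to reduce the claim to the two-one type identity \eqref{equ:1nBar2}, namely $\zeta^\sharp(\{1\}_{n-2},\ol{2})=-\zeta^\star(n)=-\zeta(n)$ for all $n\ge 2$. This identity is taken from \cite[Theorem~1.4]{Zhao2013a} with $l=1$, $\bfs=(n)$, and it is the only place in the paper where a single Riemann zeta value $-\zeta(n)$ arises in exactly this form. So I expect $U_n$ to be (up to a harmless rewriting) the Euler $\sharp$-sum $\zeta^\sharp(\{1\}_{n-2},\ol{2})$. I would therefore begin by writing $U_n$ in whatever form it is introduced in the analysis of $V_0(d)$, presumably as an iterated integral built from $\om_{-1}$, $\om_1$, $\om_0$, or equivalently from $\tb,\tc,\ta$. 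I would then bring it to the shape $\sum_{\bft=(1\circ\cdots\circ 1\circ\ol{2})}2^{\dep(\bft)}\zeta(\bft)$.

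The concrete steps are as follows. First, if $U_n$ is given as an integral such as $\int_0^1 \om_{-1}\,\om_1^{\,n-2}\cdots$, I apply the duality substitution $t\to(1-t)/(1+t)$ used repeatedly above. This replaces $\om_{\pm1}$ by combinations of $\ta$ and $\tc$, as in the proofs of Theorem~\ref{thm:UnramMMVfamily3ht1} and Theorem~\ref{thm:assumConj}. Second, I expand the resulting product $(\ta+2\tc)^{n-2}$-type word into words in $\ta,\tc$. Third, I identify each word with an Euler sum $\zeta(\bft)$ via the iterated integral representation of Euler sums from the introduction. A word in which a block of $\ta$'s is absorbed corresponds to a $\oplus$ merge, and each letter $2\tc$ contributes a comma together with a factor $2$. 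Collecting these terms reproduces exactly the defining sum of $\zeta^\sharp(\{1\}_{n-2},\ol{2})$, and the bar on the last entry comes from the sign pattern of the $\tc$'s. Finally, I invoke \eqref{equ:1nBar2} and note $\zeta^\star(n)=\zeta(n)$ in depth one, which gives $U_n=-\zeta(n)$. The case $n=2$ should be checked directly: there $U_2=2\zeta(\ol 2)=-\zeta(2)$, consistent with $\zeta(\ol2)=-\tfrac12\zeta(2)$.

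The main obstacle is the bookkeeping in the third step. One has to match the coefficients $2^{\dep(\bft)}$ and the signs coming from $\tx_\eta=dt/(\eta-t)$ so that the bars land on the right entries after the $\oplus$-merges, recalling that $1\oplus\ol{2}=\ol{3}$, and so on. If this direct matching proves awkward, I would fall back on an induction on $n$ using generating functions. I would form $\sum_n U_n x^{n}$, derive the recursion coming from peeling off the first letter (as in the recursion for $a_{j,n}$ in the proof of Lemma~\ref{lem:Xjn}), and compare it with the generating function of $\zeta(n)$. This amounts to reproving the needed special case of \cite[Theorem~1.4]{Zhao2013a}.
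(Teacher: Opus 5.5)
Your proposal is correct and follows the paper's proof. Writing $U_n=\int_0^1\om_{-1}\om_1^{n-2}(\om_1-\om_{-1})$, duality turns it into $\int_0^1(\om_1-\om_{-1})(\om_0+\om_1-\om_{-1})^{n-2}\om_0=\zeta^\sharp(1_{n-2},\ol{2})$, and \eqref{equ:1nBar2} then gives $-\zeta(n)$. The only difference is minor: the paper checks $n=2$ separately via the stuffle relation $\zeta(\ol{1})^2=2\zeta(\ol{1},\ol{1})+\zeta(2)$, whereas your computation $U_2=2\zeta(\ol{2})$ is just the $n=2$ case of the general argument, so no separate check is needed.
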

\begin{proof}
If $n=2$ then
\begin{align*}
U_2=&\, \int_0^1 \om_{-1}( \om_1-\om_{-1})
=\int_0^1 (\tb-\tc)2\tc.
\end{align*}
By the double stuffle relation,
\begin{align*}
\int_0^1 2\tc\tc=\zeta(\ol{1})^2=  2\zeta(\ol{1},\ol{1})+\zeta(2)
=\int_0^1 2\tb\tc +\zeta(2).
\end{align*}
Hence, $U_2=-\zeta(2)$.

In general, we see that by duality relation
\begin{align*}
U_n=&\, \int_0^1 \om_{-1} \om_1^{n-1}-\om_{-1} \om_1^{n-2}\om_{-1}\\
=&\,\int_0^1 (\om_1- \om_{-1}) (\om_0+ \om_1-\om_{-1})^{n-2}\om_0
=\zeta^\sharp(1_{n-2}, \ol{2})=-\zeta(n)
\end{align*}
by \eqref{equ:1nBar2}, as desired.
\end{proof}

\begin{lem}\label{lem:Un=-zetam(n)}
For all $n\in\N$, define
\begin{align*}
U_n^\fm=&\, \sum_{\eta_j=\pm 1} (\eta_1-\eta_1\eta_n)  I^\fm(0;\eta_1,\dots,\eta_n;1).
\end{align*}
Then $U_n^\fm=-\zeta^{\fm}(n)$.
\end{lem}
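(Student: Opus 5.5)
The plan is to use the descent criterion, Theorem~\ref{thm:Glanois}, exactly as in the proofs of Lemmas~\ref{lem:T1n} and \ref{lem:Xjn}, with induction on $n$. By Lemma~\ref{lem:Un=-zeta(n)} we already know $\dch(U_n^\fm)=U_n=-\zeta(n)$ for $n\ge 2$, and this will fix the rational multiple of $\zeta^\fm(n)$ left undetermined by the coaction. The case $n=1$ is degenerate: $U_1^\fm=\sum_{\eta_1}(\eta_1-1)I^\fm(0;\eta_1;1)$ is a multiple of $\log^\fm 2$. I expect it to be excluded, or handled by the same convention as in Lemma~\ref{lem:Un=-zeta(n)}, so I focus on $n\ge 2$.

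\emph{Step 1.} Write $U_n^\fm=A_n-B_n$ with
\[
A_n=\sum\eta_1 I^\fm(0;\eta_1,\dots,\eta_n;1),\qquad B_n=\sum\eta_1\eta_n I^\fm(0;\eta_1,\dots,\eta_n;1).
\]
For odd $r<n$, a cut of $D_r$ selects a window $(\eta_k;\eta_{k+1},\dots,\eta_{k+r};\eta_{k+r+1})$, where the boundary letters may also be the endpoints $0$ or $1$.

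\emph{Step 2: interior cuts.} If the window avoids both $\eta_1$ and $\eta_n$, its inner letters carry no sign weight. For $B_n$, I rescale $\eta_i\to\eta_i\eta_{k}$ on the inner letters, as in the proof of Lemma~\ref{lem:T1n}, so that both boundary points become $\pm1$. Lemma~\ref{lem:D1S1bfl}\ref{Murakami-lemma9} then makes these cuts vanish.

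\emph{Step 3: boundary cuts.} The surviving cuts are of three kinds: those starting at the left endpoint $0$, those ending at the right endpoint $1$, and those whose window contains $\eta_n$ (for $B_n$) or has $\eta_n$ as its right boundary.
\begin{itemize}
\item After the change of variables, each surviving left factor is of the form $T^\fl(1_r)$, $X^\fl_j(r)$, or $I^\fl(0;\eta_1,\dots,\eta_r;1)$ with a single weight. These are evaluated by Lemmas~\ref{lem:T1n} and~\ref{lem:Xjn}.
\item The right factors are, up to relabelling, $A_{n-r}$, $B_{n-r}$, $T^\fm(1_{n-r})$ or $X^\fm_j(n-r)$.
\end{itemize}
For $r>1$ we have $T^\fl(1_r)=0$ by Lemma~\ref{lem:T1n}. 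I expect the remaining $X^\fl$-contributions to be equal for $A_n$ and $B_n$, since both come from the first window where the weight is only $\eta_1$. This would give $D_r U_n^\fm=0$ for $1<r<n$, which matches $D_r\zeta^\fm(n)=0$.

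\emph{Step 4: the case $r=1$.} Here several $\log^\fl 2\ot(\cdots)$ terms appear, with right factors such as $U_{n-1}^\fm$, $X^\fm_1(n-1)$ and $T^\fm(1_{n-1})$. By the induction hypothesis $U_{n-1}^\fm=-\zeta^\fm(n-1)$, and together with Lemma~\ref{lem:Xjn} I will show that these terms cancel telescopically, so that $D_1U_n^\fm=0$.

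\emph{Step 5: conclusion.} Theorem~\ref{thm:Glanois} then gives $U_n^\fm+\zeta^\fm(n)\in\bigcap_r\ker D_r=\Q\zeta^\fm(n)$. Applying $\dch$ and Lemma~\ref{lem:Un=-zeta(n)} shows that the constant is $0$.

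The main obstacle is the $r=1$ bookkeeping. The cuts whose window touches $\eta_n$ together with the endpoint $1$ produce sign-twisted terms, and their cancellation depends on the exact difference $\eta_1-\eta_1\eta_n$. I would first tabulate the cuts for $n=3,4$ to see the pattern, and then organize the general cancellation as in the proof of Lemma~\ref{lem:Xjn}.
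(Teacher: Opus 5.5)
Your frame matches the paper: show $D_rU_n^\fm=0$ for all odd $r<n$, then use Theorem~\ref{thm:Glanois} and Lemma~\ref{lem:Un=-zeta(n)} to get $U_n^\fm=-\zeta^\fm(n)$. You are also right that $n=1$ must be excluded, since $U_1^\fm=t^\fm(1)-\tz^\fm(1)=2\log^\fm 2\neq 0=-\zeta^\fm(1)$. The gap is in Steps~3--4: the cancellation you expect is not the one that occurs.

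\emph{Which cuts survive.} The weight $\eta_1-\eta_1\eta_n$ involves only $\eta_1$ and $\eta_n$. So every cut whose inner letters contain neither of them dies directly by Lemma~\ref{lem:D1S1bfl}\ref{Murakami-lemma9}, including cuts with right boundary $\eta_n$; no rescaling is needed. Only the first window $(0;\eta_1,\dots,\eta_r;\eta_{r+1})$ and the last window $(\eta_{n-r};\eta_{n-r+1},\dots,\eta_n;1)$ survive, and the only right factors are $A_{n-r}$ and $B_{n-r}$. The left factor is never $T^\fl(1_r)$ or $X^\fl_j(r)$; those come from the full-product weights of Lemmas~\ref{lem:T1n} and~\ref{lem:Xjn}. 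It is $t^\fl(1_r)=\sum\eta_1 I^\fl(0;\eta_1,\dots,\eta_r;1)$, which by Lemma~\ref{lem:t111Unram} is a nonzero multiple of $\zeta^\fl(r)$ for odd $r\ge 3$.

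\emph{Why your Step~3 fails.} The first window contributes $t^\fl(1_r)\ot A_{n-r}$ to $D_rA_n$ and $t^\fl(1_r)\ot B_{n-r}$ to $D_rB_n$. The left factors agree but the right factors differ, so in $D_rU_n^\fm$ the first window leaves $t^\fl(1_r)\ot U^\fm_{n-r}$, which is not zero.

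\emph{What actually cancels it.} The last window, which your plan never uses, removes this term:
\begin{itemize}
\item For $A_n$, the last window vanishes by Murakami's lemma: its endpoints $\eta_{n-r}$ and $1$ are $\pm1$, and the weight does not touch the inner letters.
\item For $B_n$, write $I^\fl(\eta_{n-r};w;1)=I^\fl(\eta_{n-r};w;0)+I^\fl(0;w;1)$. Rescale the first piece to $I^\fl(1;w;0)=-I^\fl(0;w;1)$. Then use $I^\fl(0;w;1)=I^\fl(0;\overleftarrow{w};1)$ (valid for $r$ odd) to move the weight $\eta_n$ to the front.
\end{itemize}
The contribution of $-B_n$ is then $t^\fl(1_r)\ot(B_{n-r}-A_{n-r})=-t^\fl(1_r)\ot U^\fm_{n-r}$. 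This cancels the first window exactly, and it is precisely the paper's pairwise cancellation of its six terms.

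Consequently $r=1$ is not special: $t^\fl(1)=\log^\fl 2$ and the same two cuts cancel. Neither the induction hypothesis, nor telescoping, nor Lemma~\ref{lem:Xjn} is needed, because the cancellation is structural and never uses the value of $U^\fm_{n-r}$. This is not cosmetic. Your Step~4 substitutes $U^\fm_{n-1}=-\zeta^\fm(n-1)$, which for $n=2$ is exactly the false case $n=1$ you flagged. As you set it up, the induction breaks at its first step. The paper's proof has a related slip: it writes $U_1=t^\fm(1)-T^\fm(1)=\zeta^\fm(1)$ where it should be $t^\fm(1)-\tz^\fm(1)$. This is harmless there only because its computation of $D_rU_n^\fm$ never invokes the hypothesis.
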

\begin{proof}
In view of Lemma~\ref{lem:Un=-zeta(n)} we only need to show that $D_rU_n^\fm=0$ for all odd $r<n.$ First,
\begin{align*}
   U_1=t^\fm(1) -T^\fm(1)=\zeta^{\fm}(1).
\end{align*}
Assume $n\ge 2$ and $U_k^\fm=-\zeta^{\fm}(k)$ for all $1\le k<n$. For any odd $r<n$, the coaction
\begin{align*}
D_r U_n^\fm=&\, \sum_{\eta_j=\pm 1} (\eta_1-\eta_1\eta_n) D_r I^\fm(0;\eta_1,\dots,\eta_n;1) \\
=&\, \sum_{\eta_j=\pm 1} (\eta_1-\eta_1\eta_n)
\Big[ I^\fl(0;\eta_1,\dots,\eta_r;\eta_{r+1})\ot I^\fm(0;\eta_{r+1},\dots,\eta_n;1) \\
&\,\qquad\quad  + I^\fl(\eta_{n-r};\eta_{n-r+1},\dots,\eta_n;1)\ot I^\fm(0;\eta_1,\dots,\eta_{n-r};1) \Big]
\end{align*}
since all the other cuts vanish by Lemma~\ref{lem:D1S1bfl}\ref{Murakami-lemma9}. Hence,
\begin{align*}
D_r U_n^\fm =&\, \sum_{\eta_j=\pm 1}
\Big[\eta_1 I^\fl(0;\eta_1,\dots,\eta_r;1)\ot \eta_{r+1} I^\fm(0;\eta_{r+1},\dots,\eta_n;1) \\
&\,\qquad\quad  -\eta_1 I^\fl(0;\eta_1,\dots,\eta_r;1)\ot \eta_{r+1} \eta_n I^\fm(0;\eta_{r+1},\dots,\eta_n;1)\\
&\,\qquad\quad  + \eta_1 I^\fl(0;\eta_{n-r+1},\dots,\eta_n;1)\ot I^\fm(0;\eta_1,\dots,\eta_{n-r};1)\\
&\,\qquad\quad   +\eta_1 I^\fl(\eta_{n-r};\eta_{n-r+1},\dots,\eta_n;0)\ot I^\fm(0;\eta_1,\dots,\eta_{n-r};1)\\
&\,\qquad\quad -\eta_1\eta_n I^\fl(0;\eta_{n-r+1},\dots,\eta_n;1)\ot I^\fm(0;\eta_1,\dots,\eta_{n-r};1)\\
&\,\qquad\quad  -\eta_1\eta_n I^\fl(\eta_{n-r};\eta_{n-r+1},\dots,\eta_n;0)\ot I^\fm(0;\eta_1,\dots,\eta_{n-r};1)\Big] =0
\end{align*}
since the first and the fifth terms cancel with each other directly while the second and the last term do the same. The
remaining third and fourth terms cancel each other by path reversal. The lemma now follows from Lemma~\ref{lem:Un=-zeta(n)}
immediately.
\end{proof}

\begin{thm}\label{thm:assumConjMotivic}
For all even positive integers $d$ the motivic MMV $V_0^\fm(d)$ is unramified. Moreover,
assuming Conjecture~\ref{conj:EulerSharp212bar}, we have
\begin{equation*}
V_0^\fm(d)= \frac12\zeta^{\fm,\star}(1,d)- \frac{1}4 \zeta^\fm(d+1)+\sum_{m+n=d+1,2|n} \zeta^\fm(m,n).
\end{equation*}
\end{thm}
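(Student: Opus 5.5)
We will argue via the descent criterion of Theorem~\ref{thm:Glanois}, by induction on the even integer $d$. The base case $d=2$ is direct: $V_0^\fm(2)=\sum\eta_1\eta_2 I^\fm(0;\eta_1,\eta_2,0;1)=T^\fm(1,2)$, which is unramified by Theorem~\ref{thm:triUnramMMV} (or directly by duality, which gives $T(3)$). For general even $d$ we must show that $D_1V_0^\fm(d)=0$ and that $D_rV_0^\fm(d)\in\calL_r\ot\calH_{d+1-r}$ for every odd $1<r<d+1$.

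\textbf{Step 1: listing the cuts.} We write $V_0^\fm(d)=\sum(\eta_1\eta_d)I^\fm(0;\eta_1,\dots,\eta_d,0;1)$ and enumerate the cuts of length $r$, as in the proof of Theorem~\ref{thm:UnramMMVfamily3}. Interior cuts $I^\fl(\eta_k;\eta_{k+1},\dots,\eta_{k+r};\eta_{k+r+1})$ involving neither $\eta_1$ nor $\eta_d$ in the sign vanish by Lemma~\ref{lem:D1S1bfl}\ref{Murakami-lemma9}, because the sum over a free inner $\eta$ kills them. This leaves three kinds of cut:
\begin{itemize}[leftmargin=1cm]
\item cuts starting at $0$, i.e.\ $I^\fl(0;\eta_1,\dots,\eta_r;\eta_{r+1})$;
\item cuts ending at $\eta_d$ or at the final $0$;
\item the single cut $I^\fl(\eta_{d-r};\dots,\eta_d,0;1)$ reaching the endpoint.
\end{itemize}

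\textbf{Step 2: identifying the factors.} After the change of variables $\eta_i\to\eta_i\eta_{r+1}$, the left factors become $X^\fl_1(r)$, $T^\fl(1_r)$, $t^\fl$-type objects, or pieces of the combination $\sum(\eta_1-\eta_1\eta_n)I$. The right factors become lower-depth objects of the same shape: $V_0^\fm(d-r)$ (which is unramified by induction when $d-r$ is even), MTVs $T^\fm(1_{k},2)=V_\emptyset^\fm$, $t^\fm(1_k,2)$, and so on.

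For the Lie factors we use three results:
\begin{itemize}[leftmargin=1cm]
\item Lemma~\ref{lem:T1n}, which gives $T^\fl(1_r)=0$ for $r>1$;
\item Lemma~\ref{lem:Xjn}, which gives $X^\fl_1(r)=-2\zeta^\fl(\ol r)$;
\item Lemma~\ref{lem:t111Unram}, which gives $t^\fl(1_r),t_1^\fl(1_{r-1})\in\calL_r$.
\end{itemize}
The key simplification is Lemma~\ref{lem:Un=-zetam(n)}: the two end-cuts pair up through the sign $\eta_1-\eta_1\eta_d$ into $U^\fl_r=-\zeta^\fl(r)$. That term is unramified, and it also absorbs the $\zeta^\fl(\ol r)$ contributions, which then appear in the combination $\zeta^\fl(r)-\zeta^\fl(\ol r)=T^\fl(r)$ or its cancelling counterpart.

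\textbf{Step 3: the case $r=1$.} We check that all $\log^\fl2$ terms telescope to zero, exactly as in the $r=1$ computation of Theorem~\ref{thm:UnramMMVfamily3}. The two extreme terms $\pm\log^\fl2\ot T^\fm(d)$, coming from the first and the last positions, cancel because of the symmetric sign $\eta_1\eta_d$.

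\textbf{Step 4: the explicit formula.} Once unramifiedness is established, we assume Conjecture~\ref{conj:EulerSharp212bar}, so that Theorem~\ref{thm:assumConj} gives the period identity. We then verify that $D_r$ of the right-hand side, $\frac12\zeta^{\fm,\star}(1,d)-\frac14\zeta^\fm(d+1)+\sum\zeta^\fm(m,n)$, agrees with $D_rV_0^\fm(d)$ computed in Steps~1–3; the double-MZV coactions are standard binomial expressions in $\zeta^\fl(r)\ot\zeta^\fm(d+1-r)$. The difference then lies in $\zeta^\fm(d+1)\Q$, and applying the period map shows it is zero.

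\textbf{Main obstacle.} We expect the main difficulty to be the bookkeeping in Step 2 for odd $r>1$. The sign $\eta_1\eta_d$ is not of the uniform type $\prod\eta_i$, so the right factors after a cut are mixed-parity objects of the form $M^\fm(1_{k},2;\dots)$ with sign patterns that are not obviously unramified. The first thing to try is to show that these ramified right factors always come multiplied by $T^\fl(1_r)=0$, or in pairs that combine into $U_r^\fl$ times a known MZV. Failing that, we would fall back on the period-level identity via duality, $V_0=\int(\tb-\tc)\ta(\ta+2\tc)^{d-2}\ta$, and expand the coaction on that side instead.
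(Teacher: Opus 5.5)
There is a genuine gap, and it sits in Step~2, which is where all the work is.

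\textbf{The induction cannot close.} For $d$ even and $r$ odd, the right factor of the cut starting at $0$ is $V_0^\fm(d-r)$, which has \emph{odd} depth. So the clause ``unramified by induction when $d-r$ is even'' never applies. Moreover, odd-depth $V_0$'s are in general ramified: for example $V_0^\fm(3)=M^\fm(\check{1},\check{1},2)$ is ramified by Theorem~\ref{thm:UnramTripleCaseIII}.

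\textbf{Your first fallback also fails.} The Lie factor in front of $V_0^\fm(d-r)$ is $t^\fl(1_r)$, not $T^\fl(1_r)$, and it is nonzero: it equals $\log^\fl 2$ for $r=1$, and is a nonzero multiple of $t^\fl(r)$ for $r\ge3$ by Lemma~\ref{lem:t111Unram}. Lemmas~\ref{lem:T1n} and~\ref{lem:Xjn} play no role for $V_0$.

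\textbf{The missing idea} is that the two cuts carrying this ramified factor cancel identically, for \emph{every} odd $r$.
\begin{itemize}
\item With $\eta_i\to\eta_i\eta_{r+1}$ for $i\le r$, the cut $I^\fl(0;\eta_1,\dots,\eta_r;\eta_{r+1})$ gives $t^\fl(1_r)\ot V_0^\fm(d-r)$.
\item With $\eta_j\to\eta_j\eta_{d-r}$ for $j>d-r$, followed by the reversal $I^\fl(1;w;0)=-I^\fl(0;\overleftarrow{w};1)$ (valid since $r$ is odd), the cut $I^\fl(\eta_{d-r};\eta_{d-r+1},\dots,\eta_d;0)$ gives $-t^\fl(1_r)\ot V_0^\fm(d-r)$.
\end{itemize}
This is exactly the symmetric-sign cancellation you invoke in Step~3. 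There, however, the right factor is $V_0^\fm(d-1)$, not $T^\fm(d)$, and nothing telescopes. The cancellation gives $D_1V_0^\fm(d)=0$ at once, and no induction on $d$ is needed.

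\textbf{You have also put $U$ on the wrong side of the tensor.} For $r>1$ the only surviving cut is $I^\fl(\eta_{d-r+1};\eta_{d-r+2},\dots,\eta_d,0;1)$. Split it by path composition, $I^\fl(\eta;w;1)=I^\fl(0;w;1)+I^\fl(\eta;w;0)$ in $\calL$, and rescale and reverse as above. The two pieces become
\begin{itemize}
\item $t_1^\fl(1_{r-1})\ot\sum\eta_1 I^\fm(0;\eta_1,\dots,\eta_{d-r+1};1)$, and
\item $-t_1^\fl(1_{r-1})\ot\sum\eta_1\eta_{d-r+1}I^\fm(0;\eta_1,\dots,\eta_{d-r+1};1)$.
\end{itemize}
Hence
\[
D_rV_0^\fm(d)=\delta_{r>1}\,t_1^\fl(1_{r-1})\ot U^\fm_{d-r+1}.
\]
Unramifiedness then follows from two facts. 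First, $t_1^\fl(1_{r-1})\in\calL_r$ by Lemma~\ref{lem:t111Unram}. Second, Lemma~\ref{lem:Un=-zetam(n)} applies to the \emph{motivic} factor, giving $U^\fm_{d-r+1}=-\zeta^\fm(d-r+1)$; it does not enter as some $U^\fl_r$ on the Lie side.

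Your Step~4 strategy is the right one: match $D_r$ against that of the right-hand side, then apply the period map. But it needs this explicit formula for $D_rV_0^\fm(d)$ as input, and your Steps~1--3 do not provide it.
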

\begin{proof}
For any odd positive odd integer $r<d$ we have
\begin{align*}
D_r V^\fm_0(d)=&\, \sum_{\eta_j=\pm 1} \eta_1\eta_d  D_r I^\fm(0;\eta_1,\eta_2,\dots,\eta_d,0;1)\\
=&\, \sum_{\eta_j=\pm 1} \eta_1\eta_d \Big[
I^\fl(0;\eta_1,\dots,\eta_r;\eta_{r+1})\ot  I^\fm(0;\eta_{r+1},\dots,\eta_d,0;1)\\
&\, \qquad  + I^\fl(\eta_{d-r};\eta_{d-r+1},\dots,\eta_d;0)\ot  I^\fm(0;\eta_1,\dots,\eta_{d-r},0;1) \\
&\, \qquad  +\delta_{r>1}I^\fl(\eta_{d-r+1};\eta_{d-r+2},\dots\eta_d,0;1)\ot  I^\fm(0;\eta_1,\dots,\eta_{d-r+1};1)
\Big]
\end{align*}
because all the other cuts vanish by Lemma~\ref{lem:D1S1bfl}\ref{Murakami-lemma9}. By changing the index $\eta_j\to
\eta_j\eta_{r+1}$ for $1\le j\le r$ in the first term and similar operations, we obtain
\begin{align*}
D_r V^\fm_0(d)=&\, \sum_{\eta_j=\pm 1} \Big[
\eta_1 I^\fl(0;\eta_1,\dots,\eta_r;1)\ot \eta_{r+1} \eta_d I^\fm(0;\eta_{r+1},\dots,\eta_d,0;1)\\
&\, \qquad  \quad +\eta_d I^\fl(1;\eta_{d-r+1},\dots,\eta_d;0)\ot  \eta_1\eta_{d-r} I^\fm(0;\eta_1,\dots,\eta_{d-r},0;1) \\
&\, \qquad +\delta_{r>1}\eta_1\eta_dI^\fl(0;\eta_{d-r+2},\dots\eta_d,0;1)\ot  I^\fm(0;\eta_1,\dots,\eta_{d-r+1};1)\\
&\, \qquad +\delta_{r>1}\eta_1\eta_dI^\fl(\eta_{d-r+1};\eta_{d-r+2},\dots\eta_d,0;0)\ot  I^\fm(0;\eta_1,\dots,\eta_{d-r+1};1)
\Big].
\end{align*}
For the second term, by changing the index $\eta_j\to \eta_{d-j+1}$ for $d-r+1\le j\le d$ and $\eta_j\to \eta_{j+r}$ for $1\le
j\le d-r$ we see that it cancels with the first term. Thus, we obtain
\begin{align*}
D_r V^\fm_0(d)=&\,\delta_{r>1} \sum_{\eta_j=\pm 1} \Big[
 \eta_dI^\fl(0;\eta_{d-r+2},\dots\eta_d,0;1)\ot \eta_1 I^\fm(0;\eta_1,\dots,\eta_{d-r+1};1)\\
&\, \qquad  \quad -\eta_d I^\fl(0;0,\eta_d,\dots\eta_{d-r+2};1)\ot  \eta_1\eta_{d-r+1} I^\fm(0;\eta_1,\dots,\eta_{d-r+1};1)
\Big].
\end{align*}
The path composition property immediately implies that
\begin{equation*}
I^\fl(0;\eta_{d-r+2},\dots\eta_d,0;1)=-I^\fl(1;\eta_{d-r+2},\dots\eta_d,0;0)
=I^\fl(0;0,\eta_d,\dots\eta_{d-r+2};1).
\end{equation*}
Hence, by setting $U_n^\fm=\sum_{\eta_j=\pm 1} (
 \eta_1-\eta_1\eta_n) I^\fm(0;\eta_1,\dots,\eta_n;1)$ we get
\begin{align*}
D_r V^\fm_0(d)=&\, \delta_{r>1} t_1^\fl(1_{r-1}) \ot U^\fm_{d-r+1}\in \calL_r\ot\calH_{d-r+1}
\end{align*}
from Lemma~\ref{lem:t111Unram} and Lemma~\ref{lem:Un=-zetam(n)}. This completes the proof of the first part of the theorem,
namely, $V^\fm_0(d)$ is unramified.

Turning to the exact expression of $V^\fm_0(d)$, due to Theorem~\ref{thm:assumConj}, we only need to show the two sides have the
same image under all the coactions.

From \cite[Prop. 4.1]{XuZhao2023Aug}, we can easily get
\begin{align*}
\zeta^\star(1,d)=&\, \frac{d+2}2 \zeta(d+1)-\frac12 \sum_{2<r<d,2\nmid r} \zeta^\fm(r)\zeta^\fm(d+1-r),\\
\sum_{m+n=d+1,2|n} \zeta^\fm (m,n)
=&\,\frac{d+2}4 \zeta^\fm(d+1)-\zeta^{\fm,\star}(1,d)
+\sum_{1\le r<d, \ r \text{ odd}} 2^{r-1} \zeta^\fm(r)\zeta^\fm(d+1-r).
\end{align*}
Hence, for all odd positive integer $r<d$  we have
\begin{align*}
&\, D_r\bigg(\frac12\zeta^{\fm,\star}(1,d)- \frac{1}4 \zeta^\fm(d+1)+\sum_{m+n=d+1,2|n} \zeta^\fm(m,n)\bigg) \\
=&\, D_r\bigg(\frac12\zeta^{\fm,\star}(1,d)
+\sum_{1\le r<d, \ r \text{ odd}} 2^{r-1} \zeta^\fm(r)\zeta^\fm(d+1-r) \bigg) \\
=&\, -\bigg(\frac12-2^{r-1}\bigg)  \zeta^\fl(r)\ot \zeta^\fm(d+1-r)\\
=&\, \delta_{r>1}t_1^\fl(1_{r-1}) \ot U^\fm_{d+1-r} =D_r V^\fm_0(d)
\end{align*}
by Lemmas \ref{lem:t111Unram} and~\ref{lem:Un=-zetam(n)}.
This completes the proof of the theorem by Theorem~\ref{thm:Glanois}.
\end{proof}

\begin{exa}
If we apply the duality relation to $V_0(d)$ we get an expression that is closely related to the Euler $\sharp$-sums defined
in \cite{Glanois2015} for which the descent results in \cite{LinebargerZh2015} could be used to evaluate $V_0(d)$ in terms of
(products of) Riemann zeta values precisely. For example, we have
\begin{equation*}
4V_0(8)=64 M(1_7,2; \{-1\}_7,1)=511\zeta(9)-254\zeta(2)\zeta(7)- 62 \zeta(4)\zeta(5) -14\zeta(6)\zeta(3).
\end{equation*}
\end{exa}

\section{Some general conjectures}\label{sec:Conj}
In Section~\ref{sec:UnramDepth3}, we have determined all depth three unramified motivic MMVs. To conveniently describe our
findings for large depth, we call the largest component of an index $\bfs\in\N^d$ its \emph{width}, denoted by $\wth(\bfs)$.
The following fact is useful when we study MZVs and MTVs both of which satisfy the duality relation.

\begin{lem}\label{lem:girth}
Let $\bfs\in\N$ and let $\bfs^*$ denote its dual. Then $\wth(\bfs^*)\le \dep(\bfs)+1$. Moreover, the equality holds if and only if the largest component of $\bfs^*$ is its only non-unit component.
\end{lem}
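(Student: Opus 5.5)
We will prove Lemma~\ref{lem:girth} by working with the standard encoding of an index by a word, rather than with the iterated integrals. Write $\bfs=(s_1,\dots,s_d)$ as the word $\tb\ta^{s_1-1}\tb\ta^{s_2-1}\cdots\tb\ta^{s_d-1}$ in the letters $\ta,\tb$. The dual index $\bfs^*$ is read off the word obtained by reversing it and interchanging $\ta\leftrightarrow\tb$. Under this encoding, each component of an index equals one plus the length of the maximal run of $\ta$'s immediately following a $\tb$. Hence the components of $\bfs^*$ are one plus the lengths of the runs of $\tb$'s in the original word, read in reverse, where the first $\tb$ of each run plays the role of the separating letter. (For the duality to apply, $\bfs$ should be admissible, i.e.\ $s_d\ge2$, and we assume this throughout.)

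The first key step is to make this run structure precise. The word begins with $\tb$ and ends with $\ta$. Its $\tb$'s are exactly the $d$ letters marking the starts of the components. A maximal block of consecutive $\tb$'s of length $\ell$ corresponds to a string of $\ell-1$ consecutive components equal to $1$, followed by one component $\ge2$. After the reversal and letter swap, this block becomes a block of $\ell$ consecutive $\ta$'s, and it contributes a component $\ell+1$ to $\bfs^*$ when it is preceded by a $\tb$.

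I will instead count directly on the dual word $\ta^{s_d-1}\tb\cdots$, but the conclusion is the same. Each component of $\bfs^*$ has the form $1+(\text{length of a maximal }\ta\text{-run in the dual word})$. A maximal $\ta$-run in the dual word is a maximal $\tb$-run in the original word, and the total number of $\tb$'s in the original word is $d$. Therefore every component of $\bfs^*$ is at most $d+1$, which gives $\wth(\bfs^*)\le\dep(\bfs)+1$.

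For the equality case, observe that the component-lengths of $\bfs^*$ minus one are the lengths of the $\tb$-runs in the original word, and these lengths sum to $d$. Equality $\wth(\bfs^*)=d+1$ therefore holds exactly when a single run carries all $d$ letters $\tb$. In that case every other $\ta$-gap in the dual word has length zero, so every other component of $\bfs^*$ equals $1$. The converse is the same counting read backwards: if the largest component of $\bfs^*$ is its only non-unit component, then all $d$ $\tb$'s lie in one run, and that component equals $d+1$.

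The step I expect to need the most care is the bookkeeping at the boundary of the word. In the dual word, the first letter after reversal is $\ta$, coming from the final letter of the original word. I must check that the leading $\tb$-run of the original word (the run beginning at the first letter) produces a genuine trailing component of $\bfs^*$ rather than being absorbed. I also need to pin down the statement's ``$\bfs\in\N$'' as ``$\bfs$ admissible in $\N^d$'', since duality is only defined for admissible indices.
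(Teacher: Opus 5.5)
Your argument is correct and is essentially the paper's: the $\ta$-gaps of the dual word partition the $d$ letters coming from the $\tb$'s of $\bfs$, which is exactly the identity $|\bfs^*|-\dep(\bfs^*)=\dep(\bfs)$, and bounding one gap by the total (with equality iff all other gaps are empty) is the paper's inequality $\wth+\dep-1\le|\cdot|$ applied to $\bfs^*$. Two small slips to fix: the dual word begins $\tb^{s_d-1}\ta\cdots$ (what you wrote, $\ta^{s_d-1}\tb\cdots$, is the reversed word before the letter swap), and the boundary check you flag is immediate, since $s_d\ge2$ makes the dual word start with $\tb$, so every $\ta$ in it lies in the gap after some $\tb$.
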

\begin{proof}
By the definition, $\wth(\bfs)+\dep(\bfs)-1\le |\bfs|$  and the equality holds if and only if all the non-maximal components are equal to 1. Thus,
\begin{equation*}
   \wth(\bfs)+\dep(\bfs)-1\le \dep(\bfs)+\dep(\bfs^*)
\end{equation*}
which yields the lemma immediately.
\end{proof}

We say $\bfs$ is \emph{wide} (resp. \emph{slender}, resp.  \emph{narrow}) if $\wth(\bfs)\ge 4$ (resp. $\wth(\bfs)=3$,  resp.
$\wth(\bfs)\le 2$.)

We start with a conjecture concerning the MTVs. Recall that Kaneko and Tsumura \cite{KanekoTs2020} conjectured that if an MTV $T(\bfs)$ is unramified then either $\dep(\bfs)\le 3$ or $\dep(\bfs^*)$. This motivates us the following using Lemma~\ref{lem:girth}.

\begin{conj}\label{conj:unramMTVdual}
A wide MTV of depth at least 4 is ramified.
A slender MTV is unramified if and only if it has the shape $T(1_{2k},3,1_{2l},2)$ or $T(1_{2k},3)$ for some
$k,l\in\N\cup\{0\}$.
A narrow MTV is unramified if and only if it has the shape $T(1_k,2)$ or $T(2,1_{2k+1},2)$ for some $k\in\N\cup\{0\}$, or
$T(2,1_k,2,1_l)$ for some $k,l\in\N\cup\{0\}$ with $k+l$ odd,  or
$T(1_{2k+1},2,1_{2l},2,1_{2m+1})$ for some $k,l,m\in\N\cup\{0\}$.
\end{conj}

\begin{rem}
By duality, $T(1_{2k},3,1_{2l},2)=T(2l+2,1,2k+2)$, $T(1_{2k},3)=T(1,2k+2)$,
$T(1_k,2)=T(k+2)$, $T(2,1_{2k+1},2)=T(2k+3,2)$, and $T(1_{2k+1},2,1_{2l},2,1_{2m+1},2)=T(2m+3,2l+2,2k+3)$ are all unramified
by \cite[Theorem~1.2]{XuZhao2023Aug}. This conjecture implies that if the depths of an MTV and its dual are both at least 4 then
it is ramified, which was first conjectured by Kaneko and Tsumura \cite{KanekoTs2020}.
\end{rem}

From numerical evidence, we propose the following conjectures.

\begin{conj}\label{conj:depth4MMVs}
In depth $4$, the following list should exhaust all wide or slender unramified MMVs that are not MZVs nor MtVs:
\begin{alignat*}{5}
&M(1, 2, 4, 2;-1, 1, 1, 1),\quad &
&M(4, 1, 2, 2;1, -1, 1, 1),\quad &
&M(3, 1, 1, 3;-1, 1, 1, -1),\\
&M(1, 4, 1, 2;1, 1, -1, 1),\quad &
&M(3, 2, 1, 2;1, 1, -1, 1),\quad &
&M(2, 2, 1, 4;-1, -1, 1, -1),\\
&M(2, 3, 1, 2;1, 1, -1, 1),\quad &
&M(1, 2, 1, 4;1, 1, -1, 1),\quad &
&M(4, 1, 2, 2;-1, 1, -1, -1),\\
&M(4, 2, 1, 2;1, 1, -1, 1),\quad &
&M(3, 3, 1, 2;1, 1, -1, 1),\quad &
&M(2, 4, 1, 2;1, 1, -1, 1),\\
&M(2, 2, 1, 4;1, 1, -1, 1),\quad &
&M(3, 1, 2, 2;1, -1, 1, 1),\quad &
&T(3, 1, 1, 2), \quad T(1, 1, 3, 2).
\end{alignat*}
Moreover, $T(1, 1, 1, 2)$ and $M(1, 2, 1, 2;1, 1, -1, 1)$ are the only \emph{narrow} unramified non-MZV and non-MtV elements
of depth $4$ besides the three unramified families covered in Theorems~\ref{thm:UnramMMVfamily1}, \ref{thm:UnramMMVfamily2} and
\ref{thm:UnramMMVfamily3}.
\end{conj}

The unramified non-MTV MMVs in Conjecture~\ref{conj:depth4MMVs} all have the following property: if there is only one odd
signature then it must correspond to a unit component, otherwise all the non-unit components must have odd signature while
all the unit components have even signature.

In \cite{XuZhao2023Aug}, we found that $S(2,1_3,4)$ is unramified. We believe this is truly special.
\begin{conj}\label{conj:depth5MMVs}
In depth $5$, $S(2,1_3,4)$ is only unramified \emph{wide} MMV that is not an MZV nor an MtV. An unramified slender or narrow
MMVs
must be either MZV, or MtV, or in the family of Theorems~\ref{thm:UnramMMVfamily1} or \ref{thm:UnramMMVfamily2}, or in the
following list:
\begin{alignat*}{7}
&T(1, 1, 1, 1, 2),\quad &
&T(2, 1, 1, 1, 2),\quad &
&M(2, 1, 2, 1, 2;1_3, -1, 1),\quad&
&M(1, 2, 1, 2, 2;1_2, -1, 1_2),\\
&T(1, 2, 1, 1, 2),\quad &
&T(1, 1, 2, 1, 2),\quad &
&M(1, 2, 2, 1, 2;1_3, -1, 1),\quad&
&M(1, 2, 1, 3, 2;-1, 1_4),\\
&T(1, 1, 1, 2, 2),\quad &
&T(1, 1, 1, 1, 3),\quad &
&M(3, 1, 2, 1, 2;1_3, -1, 1).
\end{alignat*}
\end{conj}

Again, in Conjecture~\ref{conj:depth5MMVs} all non-MTV MMVs have only one odd signature and it corresponds to a unit
component.

\begin{conj}\label{conj:depth>5MMVs}
Suppose that the depth is at least $6$. Then we have
\begin{enumerate}[label=\upshape{(\arabic*)},leftmargin=2cm]
    \item All unramified \emph{wide} MMVs are MZVs or MtVs, i.e., their parity patterns must be $(1,\dots,1)$ or
        $(-1,\dots,-1)$.

\item  All unramified \emph{slender} MMVs are MZVs or MtVs or the MTVs given in Conjecture~\ref{conj:unramMTVdual}.

\item  All unramified \emph{narrow} MMVs are MZVs, or MtVs, or the MTVs given in Conjecture~\ref{conj:unramMTVdual}, or
    elements in the families given in Theorems~\ref{thm:UnramMMVfamily1}, \ref{thm:UnramMMVfamily2} and
    \ref{thm:UnramMMVfamily3}.
\end{enumerate}
\end{conj}

In \cite{XuYanZhao2022Aug}, we initiated the study of alternating MMVs and explored their important properties such as their
regularization and double shuffle relations. By utilizing and extending the ideas of this paper, we have uncovered four families of unramified motivic alternating MMVs. Furthermore, we wonder if similar phenomena appear in other
levels. For example, in level three Borwein et al. defined real multiple Clausen values and multiple Glaishers values, and in
level five Broadhurst investigated the multiple Deligne values. It would be very interesting to determine if there are
unramified families among them. Presumably, the motivic theory developed in \cite{Glanois2015} should suffice to deal with the
level three problem, however, level 5 is still mostly an uncharted territory and any progress along this line would provide us
more insight into the theory of motives over the general cyclotomic fields.

\medskip
\noindent
\textbf{Funding.} Ce Xu is supported by the General Program of Natural Science Foundation of Anhui Province (Grant No. 2508085MA014). J. Zhao is supported by the Jacobs Prize from the Bishop's School.

\medskip
\noindent
\textbf{Financial interest.} The authors have no relevant financial or non-financial interests to disclose. 

\medskip
\noindent
\textbf{Competing interest.} The authors have no competing interests to declare that are relevant to the content of this article.

\end{document}